\title{error analysis for learning the time-stepping operator of evolutionary PDEs\thanks{Submitted to the editors DATE.
\funding{KC and HY were partially supported by the US National Science Foundation under awards DMS-2244988, DMS-2206333, the Office of Naval Research Award N00014-23-1-2007, and the DARPA D24AP00325-00.}}}
\author{Ke Chen\thanks{Department of Mathematical Sciences, University of Delaware, DE 
  (\email{kechen@udel.edu}).}
\and Meenakshi Krishnan\thanks{Department of Mathematics, University of Maryland College Park, MD 
  (\email{mkrishn9@umd.edu}).}
\and Haizhao Yang\thanks{Department of Mathematics and Department of Computer Science, University of Maryland College Park, MD 
  (\email{hzyang@umd.edu}).}}
\newcommand{\Xcal}{\mathcal{X}}
\newcommand{\Ycal}{\mathcal{Y}}
\newcommand{\FNN}{\mathcal{F}_{\mathrm{NN}}}
\newcommand{\Egen}{\mathcal{E}_{gen}(\GNN)}
\newcommand{\EgenN}{\mathcal{E}^N_{gen}(\GNN)}
\newcommand{\Ep}{\mathcal{E}_\text{proj}}
\newcommand{\GNN}{\Gamma_{\mathrm{NN}}}
\begin{document}
 
\maketitle

\begin{abstract}
Deep neural networks (DNNs) have recently emerged as effective tools for approximating solution operators of partial differential equations (PDEs) including evolutionary problems. Classical numerical solvers for such PDEs often face challenges of balancing stability constraints and the high computational cost of iterative solvers. In contrast, DNNs offer a data-driven alternative through direct learning of time-stepping operators to achieve this balancing goal. In this work, we provide a rigorous theoretical framework for analyzing the approximation of these operators using feedforward neural networks (FNNs). We derive explicit error estimates that characterize the dependence of the approximation error on the network architecture - namely its width and depth - as well as the number of training samples. Furthermore, we establish Lipschitz continuity properties of time-stepping operators associated with classical numerical schemes and identify low-complexity structures inherent in these operators for several classes of PDEs, including reaction–diffusion equations, parabolic equations with external forcing, and scalar conservation laws. Leveraging these structural insights, we obtain generalization bounds that demonstrate efficient learnability without incurring the curse of dimensionality. Finally, we extend our analysis from single-input operator learning to a general multi-input setting, thereby broadening the applicability of our results.

\end{abstract}


\begin{keywords}
Generalization Error; Neural Operator Learning; Partial Differential Equations; Time-Stepping Operator.
\end{keywords}

\begin{MSCcodes}
 65M12, 65M15, 68T01
\end{MSCcodes}

\section{Introduction}
Partial Differential Equations (PDEs) are fundamental models for many applications in physics, biology, and engineering.
Of particular importance are evolutionary PDEs, which arise in applications like fluid dynamics \cite{temam2024navier} \cite{monk2003finite}, and wave propagation \cite{durran2013numerical}. 
Classical time-stepping methods for these problems include explicit and implicit schemes \cite{leveque1992numerical, butcher1964implicit}. 
Explicit methods, while straightforward to implement, suffer from severe stability constraints, necessitating prohibitively small time-steps. Conversely, implicit methods offer improved stability but demand solving complex nonlinear systems at each step, often using iterative solvers. While Implicit-Explicit (IMEX) schemes are also popular, achieving both high accuracy and stability with them can be challenging~\cite{hu2021uniform}. Consequently, these approaches can be computationally inefficient for nonlinear PDEs, making long-time simulations infeasible.

In recent years, Deep Neural Networks (DNNs) have emerged as a powerful alternative to classical numerical schemes for solving PDEs~\cite{lagaris1998artificial,han2018solving, raissi2019physics, sirignano2018dgm}, due to their superior ability to approximate complex functions \cite{chen1993approximations,yarotsky2018optimal,SHEN2022101,JMLR:v23:21-1404,SHEN2021160} and operators between function spaces \cite{liu2024deep,chen2023deep,li2023neural,lu2021learning, lu2019deeponet,kovachki2024datacomplexityestimatesoperator,mhaskar2022localapproximationoperators}. 
A recent trend is utilizing neural networks to learn PDE operators, which map initial conditions, source terms into solution fields in the forward setting, and conversely, mapping observed data to model parameters in the inverse setting. Many successful models have been proposed, including PCA-net~\cite{bhattacharya2021model,lanthaler2023operator}, Graph neural operators (GNN)~\cite{li2020neural,scarselli2008graph,battaglia2018relational}, DeepONet~\cite{lu2019deeponet}, Fourier neural operators (FNO)~\cite{li2020fourier,kovachki2023neural}, Physics-informed neural operators (PINO)~\cite{li2024physics}, and many others~\cite{wang2021learning,geng2024deep,mattey2022novel,yang2024pde,ong2022iae}. 

We are interested in operator learning frameworks for time-dependent PDEs. Consider a general non-linear PDE of the form:
\begin{align} \label{eqn:gen_pde} 
\begin{cases}
    \partial_t u(x,t) + \mathcal{L} (x, u,\mathcal{D}u, \dots,\mathcal{D}^k u) = f(u,x), \quad \text{ for } x\in D \subseteq \mathbb{R}^d,\ t \in (0,T], \\
    u(x,0) = u^0(x),\ x \in D,
    \end{cases}
\end{align}
with suitable boundary conditions imposed. Here, $\mathcal{D}$ represents the partial derivative operator, and $\mathcal{L}: D \times \mathbb{R} \times \mathbb{R}^d \times \dots \times \mathbb{R}^{d^k} \to \mathbb{R}$ with $k>0$ is a general nonlinear function. The function $f : \mathbb{R} \times D  \to \mathbb{R}$ represents the forcing/reaction term. A numerical method for \eqref{eqn:gen_pde} advances the solution from $u^n$ at time $t^n$ to $u^{n+1}$ at $t^{n+1} = t^n + \Delta t$. A general $s$-stage Runge-Kutta (RK) method defines this evolution as:
\begin{align}
    u^{n+1}(x) = u^{n}(x) + \Delta t \sum_{i=1}^{s} {b}_i k_i(x) &,\  \label{eqn:rk_gen0}\\
   k_i(x) = -\mathcal{L}\left(x, U^i, \mathcal{D} U^i, \dots \right) + f\left(U^i, x\right), \text{ for } U^i &= u^{n}(x) + \Delta t \sum_{j=1}^{s} a_{ij} k_j(x).
    \label{eqn:rk_gen}
\end{align}
Here, $ a_{ij},\ {b}_i$ for $1\leq i,j\leq s$ are coefficients of the RK method. This defines a time-stepping operator, $\Phi_{\Delta t}: u^n\mapsto u^{n+1}$. For implicit methods (the matrix $\left[a_{ij}\right]$ is not strictly lower triangular), computing $u^{n+1}$ requires expensive iterative solvers.

Several deep-learning based methods have been proposed to obtain a solution to \eqref{eqn:gen_pde}. The solution usually does not admit an explicit representation, and methods that directly learn the solution using a neural network, may suffer from low accuracy due to global-in-time optimization \cite{chenteng}. While sequential-in-time training with time-dependent neural network weights were proposed~\cite{chenteng,berman2024randomized,chen2023implicit}, obtaining the terminal solution of \eqref{eqn:gen_pde} still requires efficiently adjusting the neural network weights for all time steps, leading to intractable computational cost. In this paper, we consider an alternative: learning the semi-discrete time-stepping operator $\Phi_{\Delta t}: u^n \mapsto u^{n+1}$ that marches the PDE solution to the next time-step with a \textit{time-independent} network.  
A neural network proxy for this operator can bypass the costly nonlinear solves of implicit methods, which is particularly appealing for long-time simulations where the cumulative cost of iterative solvers becomes a bottleneck.  
This idea was numerically investigated in~\cite{qin2019data} for Ordinary Differential Equations (ODEs) with ResNet, and extended to stochastic differential equations (SDEs)~\cite{chen2024learning}.
It also has been explored for various PDEs ~\cite{raissi2019physics, zhai2023parameter, zhu2023convolutional,geng2024deep}, where a learned time-stepping map predicts the solution trajectory on a fixed spatial mesh.

A natural extension of this framework is to learn operators that take in multiple inputs, such as the operator $\Psi_{\Delta t}:(u^n,f) \mapsto u^{n+1}$  mapping both the initial state and the reaction/forcing term to the numerical solution after a fixed time-step. This leads to the concept of Multi-Input Operator learning \cite{jin2022mionet,jiang2024fourier} which considers operator regression via neural networks for operators defined on a product of Banach spaces.

A major hurdle in learning functions or operators with neural networks is the curse of dimensionality (CoD)~\cite{bauer2019deep,chen2019efficient, hutzenthaler2020proof,nakada2020adaptive,poggio2017and,bach2017breaking}, when increasing the degree of freedom $n$ correspondingly increases the computational complexity in an exponential manner, making them intractable for large-scale problems. Recent research \cite{wojtowytsch2020can,yang2022approximation,chen2021representation,darbon2020overcoming,chen2023deep} has shown that DNNs can mitigate CoD when learning operators with certain low-complexity structures. However, these theories often assume a general Lipschitz operator with certain implicit low dimensional structures (e.g. in a Barron space~\cite{barron1991universal,wojtowytsch2020can}), and are thus not readily applicable to time-stepping operators for PDEs.

\textbf{Main Contributions:} We establish generalization error bounds for learning time-stepping operators of PDEs using deep neural networks (DNNs), with the goal of understanding how operator learning can bypass the computational costs of traditional solvers while avoiding CoD. We focus on three important PDE classes: reaction-diffusion equations, parabolic equations with forcing terms, and viscous conservation laws. Our analysis reveals that the corresponding time-stepping operators possess a formally defined \emph{low-complexity structure}, namely, they can be decomposed into a sequence of linear transformations and low-dimensional nonlinear operations. 

To support this, we derive Lipschitz estimates using tools from classical numerical analysis, including semigroup theory and convergence guarantees of numerical schemes. These estimates, together with the identified structure, allow us to prove that the generalization error scales polynomially with the encoding dimension of the input, thereby avoiding the exponential complexity typical of CoD.

For reaction-diffusion equations, we analyze implicit Euler methods with Picard and Newton solvers. We also study implicit Euler and Crank-Nicolson methods for parabolic equations with forcing, and implicit Euler with Picard solver for viscous conservation laws. Across all cases, we derive explicit error bounds, and show that DNNs can efficiently learn time evolution with theoretical performance guarantees.

A further contribution is our analysis of multi-input operator networks acting on product spaces. Unlike previous works focusing on approximation \cite{jin2022mionet,jiang2024fourier}, our study establishes generalization error bounds. The low-complexity structure of multi-input time-stepping operators provides insight into how neural networks can effectively handle operator regression tasks involving interacting input functions.

\textbf{Organization:} Section \ref{sec:prob_form} formulates the operator learning problem for the time-stepping operator. Sections \ref{sec:rdeq}, \ref{sec:para_gen} and \ref{sec:cons_gen} present our analysis of the generalization error for learning time-stepping operators for reaction-diffusion equations, parabolic systems with forcing, and viscous conservation laws, respectively. We discuss global error accumulation in Section \ref{sec:global_error} and conclude with future work in Section \ref{sec:concl}.

\section{Problem Formulation}\label{sec:prob_form}
This section introduces the paper's notation and mathematical framework. The norm of a Banach space $\Xcal$ is denoted by $\|\cdot\|_{\Xcal}$. A product space ${{\Xcal_1} \times {\Xcal_2}}$, is equipped with the sum norm: $\|(u,f)\|_{{\Xcal_1} \times {\Xcal_2}} = \|u\|_{{\Xcal_1}} + \|f\|_{{\Xcal_2}} $ for $u \in \Xcal_1$, $f \in \Xcal_2$. We use standard asymptotic notation: $x=\Omega(y)$ signifies $x\geq Cy$, for some constant $C>0$, and $x=\mathcal{O}(y)$ is the corresponding upper bound.

\subsection{Problem} 
Our objective is to learn the PDE time-stepping operator. We consider two cases:
\begin{itemize}
    \item Single-input: The target operator $\Phi_{\Delta t}:\Xcal_1\to \Ycal$ maps an initial state $u^0 \in \Xcal_1=\Xcal$ to the state $u^1\in \Ycal$ after fixed time-step $\Delta t> 0$. 
    \item Multi-input: The target operator $\Psi_{\Delta t}:\Xcal_1\times \Xcal_2 \to  \Ycal$ maps a tuple $(u^0,f) \in \Xcal_1\times \Xcal_2= \Xcal$, comprising the initial state, and a model parameter (e.g. the forcing function) to the state $u^1\in \Ycal$ after time-step $\Delta t$.
\end{itemize}
For our problem formulation, we will use the unified notation $\mathcal{S}_{\Delta t}: {\Xcal}  \to {\Ycal}$, to refer to either operator, where the domain $\Xcal$ is understood from the context.

In both cases, the target non-linear operator $\mathcal{S}_{\Delta t}$ is approximated using a DNN trained on a dataset $ \{ ({u}_i, v_i)  \mid  v_i = \mathcal{S}_{\Delta t}(u_i) ,\ i=1,\ldots,n \}$ generated independently and identically distributed (i.i.d.) from a random measure $\gamma$ over ${\Xcal}$. We assume that the data is noise-free, since it is generated using deterministic numerical algorithms. 

Since DNNs operate in finite-dimensional spaces,
we use encoder-decoder pairs, specifically $E_{\mathcal{P} }: \mathcal{P}  \to \mathbb{R}^{d_{\mathcal{P}  }}$ and $D_{\mathcal{P}   }: \mathbb{R}^{d_{\mathcal{P} }}\to {\mathcal{P}  }$, where $d_{\mathcal{P} }$ is the encoding dimension for $\mathcal{P}=\Xcal, \Ycal$. For product space $\Xcal = {{\Xcal_1} \times {\Xcal_2}}$, the encoder is defined by concatenating the individual encodings, $E_{{\Xcal_1} \times {\Xcal_2}}(u^0,f) = \begin{bmatrix} E_{{\Xcal_1}}(u^0); E_{ {\Xcal_2}}(f) \end{bmatrix}$ where $E_{\Xcal_i}: {\Xcal_i}\to\mathbb{R}^{d_{\Xcal_i}}$ for $i=1,2$, with $d_{\Xcal} = d_{\Xcal_1} + d_{\Xcal_2}$. The decoder is defined analogously.

 The time-stepping map is thus approximated using a finite-dimensional operator $\Gamma: \mathbb{R}^{d_\Xcal} \to \mathbb{R}^{d_\Ycal}$ composed with encoders and decoders so that $\mathcal{S}_{\Delta t} \approx D_\Ycal  \circ \Gamma \circ E_{{\Xcal}}$.
 This can be achieved by solving the following optimization problem:%
\begin{equation}\label{eqn:optimization}
	\underset{\Gamma\in 
	\FNN}{\mathrm{argmin}} \frac{1}{n} \sum_{i=1}^{n} \| \Gamma\circ E_{{\Xcal}}(u_i) -E_\Ycal(v_i)\|_2^2.
\end{equation}
The class $\FNN$ consists of rectified linear unit (ReLU) feedforward DNNs of the form,%
\begin{equation}\label{eqn:fnn} 
     f(x)  =  W_L\phi_{L-1} \circ \phi_{L-2} \circ\cdots \circ \phi_1(x) + \beta_L\,,\ \  \phi_i(x) \coloneqq \sigma(W_i x + \beta_i) \,, i =1,\ldots,L-1 \,,
\end{equation}
where $\sigma(x) = \max\{x,0\}$ is the ReLU activation function evaluated point-wise, and $W_i$ and $\beta_i$ represent weight matrices and bias vectors, respectively. 
In practice, the space $\FNN$ is selected as a compact set comprising ReLU feedforward DNNs. The following architecture is considered within the class of $\FNN$ functions:
\begin{equation}\label{eqn:FNN}
	\begin{aligned}
		&\FNN(d,L,p,M)=\{\Gamma=[f_1, f_2, \dots,f_{d}]^{\top}: 
		\mbox{ for each }k=1,\dots,d\,,f_k(x) \mbox{ is} \\
  & \mbox{in the form of (\ref{eqn:fnn}) } 
		\mbox{with  }L \mbox{ layers, width bounded by } p, 
		\|f_k\|_{\infty}\leq M\},
	\end{aligned}
\end{equation}
where $\|f\|_\infty = \sup_{x} |f(x)|$. When there is no ambiguity, the notation $\FNN$ is used without its associated parameters. 

\subsection{Assumptions} As in \cite{liu2024deep,chen2023deep}, we rely on the following assumptions to analyze the approximation and generalization error.
\begin{assumption}[Compactly supported measure]\label{assump:compact_supp}
	The probability measure $\gamma$ is supported on a compact set $\Omega_\Xcal \subset \Xcal$. Consequently, there exists $R_{{\Xcal}}>0$ such that $\|u\|_{{\Xcal}} \leq R_{{\Xcal} }$ for any $u\in\Omega_{{\Xcal} }$. Initial conditions are drawn from a compact set ${\Omega_0 \subset \Xcal_1}$ and model parameters, if any, from compact set ${\Omega_p \subset \Xcal_2}$. The input support is thus, ${\Omega_\Xcal = \Omega_0}$ for the single-input case, and ${\Omega_\Xcal = \Omega_0 \times \Omega_p}$ for the multi-input case. 
 \end{assumption}

 We denote the pushforward measure of $\gamma$ under $\mathcal{S}_{\Delta t}$ as $\mathcal{S}_{{\Delta t}{\#}\gamma}$ such that for any $\Omega \subset \Ycal$, we have $\mathcal{S}_{{\Delta t}{\#}\gamma}(\Omega)=\gamma(\{u:\mathcal{S}_{\Delta t}(u)\in \Omega\})$.

\begin{assumption}[Fixed Lipschitz encoders and decoders]\label{assump:lip_enco} The encoder-decoder pairs $E_{{\Xcal} },\ D_{{\Xcal} }$ and $E_{\Ycal},\ D_{\Ycal}$ are pre-trained and fixed. They satisfy:
$E_\mathcal{P}(0_\mathcal{P}) = \mathbf{0}\,, D_\mathcal{P}(\mathbf{0}) = 0_\mathcal{P}\,$
where $\mathbf{0}$ and $0_{\mathcal{P} }$ denote the zero vector in $\mathbb{R}^{d_\mathcal{P}}$, and the zero element in $\mathcal{P}$, respectively, for $ \mathcal{P}= \Xcal, \Ycal$. Furthermore, the encoders are Lipschitz operators:
 \[
 \| E_\mathcal{P} (u_1) - E_\mathcal{P} (u_2)\|_2 \leq L_{E_\mathcal{P}} \| u_1-u_2 \|_\mathcal{P} \,, \quad \mathcal{P} = {\Xcal}, \Ycal \,,  
	\]
 where $\|\cdot\|_2$ denotes the Euclidean $L^2$ norm, $\|\cdot\|_\mathcal{P}$ denotes the associated norm of $\mathcal{P}$. Similarly, the decoders $D_\mathcal{P},\ \mathcal{P} = {{\Xcal} },\Ycal$ are $L_{D_\mathcal{P}}$-Lipschitz continuous.
 \end{assumption}

We define $\Pi_{{{\Xcal} },d_{{\Xcal} }}:= D_{{{\Xcal} }} \circ E_{{{\Xcal} }}$ and $\Pi_{\Ycal,d_\Ycal} := D_\Ycal \circ E_\Ycal$ to denote the encoder-decoder projections on $\Xcal$ and ${{\Ycal}}$ respectively.
\begin{assumption}[Lipschitz operator]\label{assump:Lipschitz}
	The target operator $\mathcal{S}_{\Delta t}$ is Lipschitz continuous with constant $L_{\mathcal{S}_{\Delta t}}$. 
\end{assumption}

Assumptions \ref{assump:compact_supp}, \ref{assump:Lipschitz} imply that the image-set $\Omega_1:=\mathcal{S}_{\Delta t}(\Omega_\Xcal)$ is bounded by $R_\Ycal := L_{\mathcal{S}_{\Delta t}} R_{{\Xcal} }$.

Many PDE operators also possess an inherent structure allowing them to be decomposed into linear operations followed by low-dimensional non-linearities. We formalize this as a \emph{low complexity structure}.

\begin{definition}[Low Complexity Structure]
An operator $\Phi: \Xcal \to \Ycal$ has a low complexity structure with respect to a compact set $\Omega_\Xcal \subset \Xcal$ and chosen encoder-decoders ($E_{\Xcal}, D_{\Xcal}, E_{\Ycal}, D_{\Ycal}$) if, for any $u \in \Omega_{\Xcal}$, there exist $0 < d_0, \dots, d_k \leq d_{\Xcal}$ and $0 < l_0, \dots, l_k \lesssim \max\{d_{\Xcal}, d_\Ycal\}$ such that:
\[
\Pi_{\Ycal,d_\Ycal} \circ \Phi(u) = D_\Ycal \circ G^k \circ \cdots \circ G^1 \circ E_{\Xcal}(u).
\]
Here $G^i: \mathbb{R}^{\ell_{i-1}}\to \mathbb{R}^{\ell_i}$ is defined as $G^i(a) = \begin{bmatrix}
        g_1^i( (V_1^i)^\top a),\cdots, g_{\ell_i}^i((V_{\ell_i}^i)^\top a)
    \end{bmatrix}\,,$
for the matrix $V_j^i\in \mathbb{R}^{\ell_{i-1} \times d_{i}}$ and non-linearity $g_j^i: \mathbb{R}^{d_i} \to 
\mathbb{R}$ for $j=1,\ldots,\ell_i$, $i=1,\ldots,k$.
    \label{defn:low_comp}
\end{definition}

\begin{assumption}[Low Complexity Structure]\label{assump:low_complexity} The target operator $\mathcal{S}_{\Delta t}$ for the chosen encoder-decoders possesses the low complexity structure in Definition \ref{defn:low_comp}.
\end{assumption}

\subsection{Generalization Error} Given the trained neural network $\GNN$, we denote its generalization error as,
\begin{equation}
    \label{eqn:gen_err}
    \Egen := \mathbb{E}_{u\sim \gamma} \left[ \| D_{\Ycal} \circ \GNN \circ E_{\Xcal}(u) - \mathcal{S}_{\Delta t}(u)\|_{\Ycal}^2 \right]  \,.
\end{equation}
The following theorem from \cite{chen2023deep} bounds this error under the stated assumptions.

\begin{theorem}\label{thm:low_complexity}
Suppose Assumption \ref{assump:compact_supp}-\ref{assump:low_complexity} hold. 
Let $\GNN$ be the minimizer of the optimization (\ref{eqn:optimization}) for $\FNN(d_\Ycal,kL,p,M)$ in (\ref{eqn:FNN}) with parameters: 
	\begin{equation*}
		Lp =  \Omega \left( d_{\Xcal}^{\frac{4-d_{\text{max}}}{4+2d_{\text{max}}}} n^{\frac{d_{\text{max}}}{4+2d_{\text{max}}}} \right)
  \,, M \geq \sqrt{\ell_{\text{max}}} L_{E_\Ycal} R_\Ycal.
	\end{equation*}
	Here, $d_\text{max} = \max\{d_i\}_{i=1}^k$ and $\ell_\text{max} = \max\{\ell_i\}_{i=1}^k$ are parameters from Definition \ref{defn:low_comp}.
	Then we have:
	\begin{equation*}
		\begin{aligned}
			\mathcal{E}_{gen}(\GNN)  \lesssim L_{\mathcal{S}_{\Delta t}}^2 \log(L_{\mathcal{S}_{\Delta t}})\ell_\text{max}^{\frac{8+d_\text{max}}{2+d_\text{max}} } n^{-\frac{2}{2+d_\text{max}}} \log n + \mathcal{E}_{proj} \,,
		\end{aligned}
	\end{equation*}
	where the constants in $\lesssim$ and $\Omega(\cdot)$ only depend on $k,p,\ell_\text{max},R_\Ycal,L_{E_\Ycal},L_{D_\Ycal},L_{E_{{\Xcal}}}, L_{D_{{\Xcal}}}$. Here $\Ep$ represents a projection error term along with a negligible term $n^{-1}$:
 \[
\Ep \coloneqq L^2_{\mathcal{S}_{\Delta t}}   \mathbb{E}_{u\sim \gamma}  \left[  \| \Pi_{\Xcal,d_\Xcal}(u)-u \|_\Xcal^2\right]  + \mathbb{E}_{w\sim \mathcal{S}_{{\Delta t}{\#}}\gamma} \left[ \| \Pi_{\Ycal,d_\Ycal}(w) -w \|_\Ycal^2\right] 
			+ n^{-1} \, .
\]   
\end{theorem}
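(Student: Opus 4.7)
The plan is to decompose the generalization error into a finite-dimensional learning error plus projection residues, and then to exploit the low-complexity factorization from Definition~\ref{defn:low_comp} so that the rates depend on $d_\text{max}$ rather than on $d_\Xcal$ or $d_\Ycal$. Setting $\widetilde{\mathcal{S}} := E_\Ycal \circ \mathcal{S}_{\Delta t} \circ D_\Xcal$ for the encoded target, I would write
\[
D_\Ycal \circ \GNN \circ E_\Xcal(u) - \mathcal{S}_{\Delta t}(u) = D_\Ycal\bigl(\GNN(E_\Xcal u) - \widetilde{\mathcal{S}}(E_\Xcal u)\bigr) + \bigl(\Pi_{\Ycal,d_\Ycal}\mathcal{S}_{\Delta t}\Pi_{\Xcal,d_\Xcal}(u) - \mathcal{S}_{\Delta t}(u)\bigr),
\]
square, take expectation, and use Assumptions~\ref{assump:lip_enco}--\ref{assump:Lipschitz} together with a triangle inequality on $\Pi_{\Xcal,d_\Xcal}(u)-u$ and $\Pi_{\Ycal,d_\Ycal}(w)-w$ to absorb the second piece into $\Ep$. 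This reduces the task to bounding the $L^2$-error of $\GNN$ against $\widetilde{\mathcal{S}}$ on the encoded support $E_\Xcal(\Omega_\Xcal)$, which is a finite-dimensional regression problem with bounded, noise-free response.

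I would then apply the standard excess-risk split into an approximation error $\inf_{\Gamma \in \FNN}\|\Gamma - \widetilde{\mathcal{S}}\|_{L^\infty}^2$ plus a uniform generalization gap $\sup_{\Gamma \in \FNN} |R_n(\Gamma) - R(\Gamma)|$. For approximation, Definition~\ref{defn:low_comp} factors $\widetilde{\mathcal{S}} = G^k \circ \cdots \circ G^1$ where $G^i$ parallelizes $\ell_i$ scalar Lipschitz maps $g^i_j$ each acting on a $d_i$-dimensional linear projection, so a Yarotsky-type ReLU construction approximates each $g^i_j$ with $L^\infty$ error $\mathcal{O}((Lp)^{-2/d_i})$ using a sub-network of width $p$ and depth $L$; parallelizing over $j$ and composing over $i=1,\dots,k$ produces an approximant in $\FNN(d_\Ycal,kL,p,M)$ whose error is a product of layer-wise Lipschitz factors times $(Lp)^{-2/d_\text{max}}$. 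The truncation $M=\sqrt{\ell_\text{max}}\,L_{E_\Ycal} R_\Ycal$ is precisely the $L^\infty$ bound on $\widetilde{\mathcal{S}}$ over the encoded support, by Assumption~\ref{assump:lip_enco} and $R_\Ycal = L_{\mathcal{S}_{\Delta t}} R_\Xcal$. For the generalization gap, a Rademacher / pseudo-dimension argument for ReLU networks ($\mathrm{Pdim}(\FNN) = \widetilde{\mathcal{O}}(Lp)$) combined with the boundedness of losses yields a uniform deviation of order $\sqrt{L_{\mathcal{S}_{\Delta t}}^2\,Lp\log n/n}$.

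Balancing the approximation rate $(Lp)^{-4/d_\text{max}}$ against the estimation rate $Lp/n$ then determines the optimal depth-width product $Lp = \Omega(n^{d_\text{max}/(4+2d_\text{max})})$, where the $d_\Xcal^{(4-d_\text{max})/(4+2d_\text{max})}$ prefactor arises from the width needed to realize the linear pre-layer feeding $G^1$. Plugging back reproduces the advertised $n^{-2/(2+d_\text{max})}$ rate, modulo $\log n$, $\log L_{\mathcal{S}_{\Delta t}}$, and the prefactor $\ell_\text{max}^{(8+d_\text{max})/(2+d_\text{max})}$ accumulated from multiplicative propagation of Lipschitz constants across $k$ composed layers and $\ell_\text{max}$ parallel channels. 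The main obstacle, and where the cited result \cite{chen2023deep} does the heavy lifting, is this careful bookkeeping of layer-wise Lipschitz constants and the verification that the parallel-then-serial assembly of sub-networks (with identity channels and zero-padding) genuinely fits within the single class $\FNN(d_\Ycal,kL,p,M)$ of~\eqref{eqn:FNN}. Since the theorem is a direct invocation of that result, the substantive task here is to confirm that Assumptions~\ref{assump:compact_supp}--\ref{assump:low_complexity} as instantiated match the hypotheses of the cited bound, after which the conclusion follows.
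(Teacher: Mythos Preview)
Your proposal is reasonable as a sketch of how the result in \cite{chen2023deep} is obtained, but note that the paper does not prove Theorem~\ref{thm:low_complexity} at all: it is stated as a direct citation (``The following theorem from \cite{chen2023deep} bounds this error\ldots''), with no accompanying argument. So there is nothing to compare against --- the paper simply invokes the theorem as a black box, and you yourself correctly identify this in your final sentence. Your decomposition into projection residues plus finite-dimensional regression, followed by the approximation/estimation split with Yarotsky-type constructions on each $g^i_j$ and a pseudo-dimension bound for the gap, is the right outline of what \cite{chen2023deep} does; but for the purposes of this paper, the ``proof'' is just the citation.
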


Assumptions \ref{assump:compact_supp}-\ref{assump:low_complexity}, together with Theorem \ref{thm:low_complexity}, provide the foundation for our analysis. Subsequent sections will show that the assumptions are met for time-stepping schemes applied to various PDE classes. This requires deriving explicit Lipschitz estimates and identifying low-complexity structure. The encoder-decoder pair for the initial condition plays a significant role in proving the latter, and we briefly discuss suitable encodings for the input space.

\subsection{Encoding the Initial Condition} 
A straightforward choice of encoder satisfying Assumption \ref{assump:lip_enco} is the discretization encoder, where the function is represented by its values on a grid in the domain. Common alternatives include basis encoders, such as those using the Fourier series
with trigonometric basis, or PCA with data-driven basis. For input functions in Hölder space $C^s(D)$ for $s\in \mathbb{R}^+$ with a bounded H\"older norm, the standard spectral encoder may be used, and the encoding/decoding projection errors may be derived from the results presented in \cite{schultz1969l}. We refer to \cite{liu2024deep} for a detailed treatment on encodings for function spaces.

With this framework established, we proceed to analyze the generalization error for learning time-stepping operators for various classes of PDEs.

\section{Reaction-Diffusion Equations} \label{sec:rdeq}
Reaction-diffusion PDEs describe phenomena such as population dynamics \cite{berestycki2008reaction} and the evolution of chemical concentrations. Examples include the Fisher-KPP equation, modeling biological species growth \cite{kolmogorov1991study}, and the Allen-Cahn equation \cite{allen1972ground}, representing phase separation in alloys.
Consider a general reaction-diffusion equation of the form:
\begin{align}
\begin{cases}
    \partial_t u(x,t)  = \Delta u(x,t) + f(u(x,t)),\ & x\in D \subseteq \mathbb{R}^d,\ t \in [0,T], \label{eqn:gen_rd} \\
    u(x,0) = u^0(x),\ &x \in D,
\end{cases}
\end{align}
where $u(x,t)$ is the state variable, the reaction function $f \in C^{1}(\mathbb{R})$. We assume homogeneous Dirichlet boundary conditions on a sufficiently smooth boundary $\partial D$. 

Consider the semi-discrete form of equation (\ref{eqn:gen_rd}) obtained by discretizing only in time. The update step for the explicit Forward Euler scheme is given by,
\begin{align*}
    u^{1} = u^0 + \Delta t (\Delta u^0 + f(u^0)) =  [1+\Delta t \Delta ]u^0  + \Delta t f(u^0).
\end{align*}
Similarly, for the implicit Backward Euler (BE) scheme, we have:
\begin{align} 
    u^{1} &= u^0 + \Delta t (\Delta u^{1} + f(u^{1})), \nonumber \\
    \implies u^{1} &= \left[1-\Delta t \Delta \right]^{-1} \left( u^0 + \Delta t f(u^{1})\right) =: \Phi_{\text{BE}}(u^{1},u^0,f).\label{eqn:implicit}
\end{align}
Let $(C_0(\bar{D}),\|\cdot\|_\infty)$ denote the Banach space of continuous functions on $\bar{D}$ vanishing on $\partial D$. Then, $\Phi_{\text{BE}}: C_0(\bar{D})\times C_0(\bar{D}) \times C^1(\mathbb{R}) \to C_0(\bar{D})$ represents the BE operator. 

The solution $u^{1}$ is the fixed point of the mapping $z \to \Phi_{\text{BE}}(z,u^0,f)$. This mapping is a contraction when $\Delta t$ is sufficiently small relative to the Lipschitz constant of $f$ (see Lemma \ref{lem:pic_lc}). The Banach fixed-point theorem thus guarantees the existence of a unique solution, which can be found using Picard iterations:
\begin{align}
    u^{(0)} = u^0, \quad 
    u^{(i)} &=  \left[1-\Delta t \Delta \right]^{-1} \left( u^0 + \Delta t f(u^{(i-1)})\right),\ 1\leq i\leq m,\quad   u^{1} \approx u^{(m)}.
\end{align}
The iterations are terminated after $m$ steps (or after reaching an error tolerance). Note that the first iteration corresponds to a first-order IMEX scheme \cite{ascher1997implicit} where linear terms are treated implicitly, and non-linear terms, explicitly.

In the single-input case with a fixed $f \in C^1(\mathbb{R})$, the target time-stepping operator $\Phi_{\text{P}}:\Omega_0 \subset C_0(\bar{D}) \to C_0(\bar{D})$ for compact $\Omega_0$ may be defined as: 
\begin{equation} \label{eqn:sing_pic_iter}
    u^{(m)} =: \Phi_{\text{P}}(u^0)
\end{equation} 
In the multi-input case, $\Psi_{\text{P}}: \Omega_0 \times \Omega_p \subset C_0(\bar{D})\times C^1(\mathbb{R}) \to C_0(\bar{D})$ represents the target BE operator with Picard iterations, where it is defined as:
\begin{equation} \label{eqn:pic_iter}
     u^{(m)} =: \Psi_{\text{P}}(u^0,f).
\end{equation}
Alternatively, Newton's method solves \eqref{eqn:implicit} by computing the root of the map:
\begin{align}
    \Phi_{u^0}(u) &:= u- [1-\Delta t \Delta ]^{-1} u^0 - [1-\Delta t \Delta ]^{-1} \Delta t f(u). \label{eqn:root_newt}
\end{align}
The solution is recovered using the iterative method given by:
\begin{align}
    u^{(0)} = u^0, \quad u^{(i)} &= u^{(i-1)} - \left[D\Phi_{u^0}\left(u^{(i-1)}\right)\right]^{-1} \Phi_{u^0}(u^{(i-1)}), \ \text{for } i = 1,\ldots,m,\nonumber\\
    u^{1} &\approx u^{(m)} =: \Phi_{\text{N}}(u^0). \label{eqn:newton}
\end{align}
Here, $\Phi_{\text{N}}: \Omega_0 \subset C_0(\bar{D})\to C_0(\bar{D})$ denotes the single-input BE operator with Newton's solver, for compact $\Omega_0$ as per Assumption \ref{assump:compact_supp}. The multi-input operator may be defined correspondingly. Newton's method converges quadratically  near the solution, under certain smoothness assumptions on $\Phi_{u^0}$ (as formalized by the Kantarovich theorem) in a neighborhood of the iterates and the solution. 
 
Our goal is to bound the generalization error for networks that learn operators corresponding to the numerical schemes \eqref{eqn:sing_pic_iter}, \eqref{eqn:pic_iter}, and \eqref{eqn:newton}.
We restrict our study to first order implicit methods as higher order RK methods have a more cumbersome representation, and explicit methods lack certain stability properties. 

Multi-input networks that also take in reaction functions as input, offer the advantage of predicting the time evolution for a whole class of reaction-diffusion equations without retraining. This requires an appropriate encoding for the reaction function. 

\subsection{Encoding the Reaction Function} A natural way to encode the reaction function $f$ in a finite dimensional space is with a basis encoder. For example, using the coefficients of a truncated Taylor series around the origin:
\begin{equation} \label{eqn:basis}
    f(x) = \sum_{i=0}^{p-1} a_i T_i(x) + \mathcal{O}(x^{p}), \quad \text{where } T_i(x) = x^i.
\end{equation}
This representation is exact in many important models like the Newell–Whitehead-Segel equation \cite{newell1969finite} and Fisher-KPP equation \cite{kolmogorov1991study}, with a polynomial reaction function. 

For periodic H\"older reaction functions, alternatively, we may use a truncated Fourier expansion of $f \in L^2[-1,1]$ given by $f(x) \approx \sum_{i=0}^{p-1} a_i T_i(x)$, with:
\begin{align*}
    T_0(x) = \frac{1}{2}, \quad T_{2p-1}(x) = \sin(p\pi x), \quad T_{2p}(x) = \cos(p\pi x) \text{ for $p\geq1$}.
\end{align*}
The reaction function is then encoded as the vector $\mathrm{a} = \begin{bmatrix} a_0,a_1,\cdots, a_{p-1} \end{bmatrix}^\top $.

Next, we study the time-stepping solvers defined in \eqref{eqn:pic_iter}, \eqref{eqn:sing_pic_iter}, and \eqref{eqn:newton} and present our main estimates on the generalization error for learning these mappings.

\subsection{Generalization Error}
We treat the implicit time-stepping operators with Picard and Newton solvers separately. A preliminary lemma is established below.

\begin{lemma} \label{lemma:contract}
Let $L$ be the solution operator for $[I-\Delta t\Delta]u^1=u^0$ for $u^0 \in C_0(\bar{D})$ with homogeneous Dirichlet boundary conditions on smooth $\partial D$. Then, for $\Delta t> 0$,
\[
\|Lu^0-Lv^0\|_\infty \leq \|u^0-v^0\|_\infty, \ \forall \ u^0,v^0 \in C_0(\bar{D}).
\]
 \end{lemma}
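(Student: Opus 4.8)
The claim is that the resolvent operator $L = [I - \Delta t\,\Delta]^{-1}$, with homogeneous Dirichlet boundary conditions, is a contraction (non-expansive) on $(C_0(\bar D), \|\cdot\|_\infty)$. The plan is to invoke the maximum principle for the elliptic operator $I - \Delta t\,\Delta$, which is the standard route to $L^\infty$ bounds of this type. First I would note that by linearity it suffices to show $\|Lw\|_\infty \le \|w\|_\infty$ for $w = u^0 - v^0 \in C_0(\bar D)$: writing $z = Lw$, the function $z$ solves $z - \Delta t\,\Delta z = w$ in $D$ with $z = 0$ on $\partial D$. Then I would apply the weak maximum principle for the operator $\mathcal{A} := I - \Delta t\,\Delta$, which has the requisite sign on the zeroth-order coefficient (the coefficient of $z$ is $+1 > 0$, and $\Delta t > 0$), so the comparison principle applies.

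The key step is the following comparison argument. Let $c := \|w\|_\infty$ and consider $\tilde z := z - c$. Then $\mathcal{A}\tilde z = \mathcal{A} z - c = w - c \le 0$ in $D$, so $\tilde z$ is a subsolution of $\mathcal{A}$; moreover on $\partial D$ we have $\tilde z = -c \le 0$. The weak maximum principle for $\mathcal{A} = I - \Delta t\,\Delta$ (with positive zeroth-order term) gives $\max_{\bar D}\tilde z \le \max\{0, \max_{\partial D}\tilde z\} = 0$, hence $z \le c$ throughout $\bar D$. Applying the same reasoning to $-z$ (which solves $\mathcal{A}(-z) = -w$, with $\|{-w}\|_\infty = c$) yields $-z \le c$, so $\|z\|_\infty \le c = \|w\|_\infty$. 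Substituting back $z = L(u^0 - v^0) = Lu^0 - Lv^0$ and $w = u^0 - v^0$ gives the desired inequality. One should also remark that $Lu^0 \in C_0(\bar D)$, i.e.\ $L$ is well-defined on this space, which follows from elliptic regularity together with the continuity of the boundary data (zero) and the fact that $\partial D$ is smooth.

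The main obstacle — or rather the main point requiring care — is justifying the maximum principle in a form valid for merely continuous data $w \in C_0(\bar D)$ rather than, say, $C^2$ or Hölder data, since $z = Lw$ need not be classically $C^2$. I expect to handle this by either (i) working with weak/distributional solutions and citing the weak maximum principle for $H^1$ solutions of uniformly elliptic equations with a positive zeroth-order coefficient, or (ii) approximating $w$ by smooth functions $w_k \to w$ uniformly, applying the classical maximum principle to each $z_k = Lw_k$, and passing to the limit using continuity of $L$ on $C_0(\bar D)$ (which itself follows from the bound being proved, applied in the smooth category first). An alternative, more semigroup-flavored route — consistent with the paper's stated reliance on semigroup theory — is to write $L = \int_0^\infty e^{-t}\, e^{t\Delta t\,\Delta}\,dt$ using that $[I - \Delta t\,\Delta]^{-1} = \int_0^\infty e^{-s}(e^{s\Delta t\,\Delta})\,ds$, and use that the Dirichlet heat semigroup $e^{\tau\Delta}$ is a contraction on $C_0(\bar D)$ (again a maximum-principle fact), so $\|Lw\|_\infty \le \int_0^\infty e^{-s}\|e^{s\Delta t\,\Delta}w\|_\infty\,ds \le \int_0^\infty e^{-s}\,ds\,\|w\|_\infty = \|w\|_\infty$; this packages the analytic technicalities into the well-known contractivity of the heat semigroup.
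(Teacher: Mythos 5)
Your primary argument is correct, and it takes a genuinely different route from the paper. The paper proves non-expansiveness by writing $L$ through the Hille--Yosida resolvent formula for the Dirichlet heat semigroup, $\frac{1}{\Delta t}\bigl[\frac{1}{\Delta t}-\Delta\bigr]^{-1}u^0 = \frac{1}{\Delta t}\int_0^\infty e^{-t/\Delta t} T(t)u^0\,dt$, invokes $\|T(t)w\|_\infty \le \|w\|_\infty$ from the parabolic maximum principle, and evaluates the integral of $e^{-t/\Delta t}$ to $\Delta t$. Your main proof instead argues directly on the elliptic problem: the comparison function $\tilde z = z - \|w\|_\infty$ is a subsolution of $\mathcal{A} = I - \Delta t\,\Delta$ (which has positive zeroth-order coefficient) with nonpositive boundary data, so the weak maximum principle gives $z \le \|w\|_\infty$, and symmetrically for $-z$. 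Both arguments are correct; yours is more elementary and self-contained, requiring only the stationary elliptic maximum principle and avoiding any semigroup machinery, at the modest cost of the regularity caveat you flag (handled cleanly by either the $H^1$ weak maximum principle or smooth approximation). The paper's choice is motivated partly by reuse: the operator $T_{\Delta t}$ defined in the resolvent formula reappears explicitly in the proof of Proposition~\ref{prop:pic_lipschitz}, so introducing it here amortizes the setup. Your closing remark correctly identifies the paper's actual route (indeed $\int_0^\infty e^{-s} e^{s\Delta t\,\Delta}\,ds$ is the paper's integral after the substitution $t = s\,\Delta t$), so you have in effect supplied both proofs.
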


\begin{proof}
Let $u^1 =Lu^0$ and $v^1 = Lv^0$ for $u^0,\ v^0 \in C_0(\bar{D})$. The Laplacian associated with the heat equation induces a heat semi-group, $\{T(t)u\}_{t\geq 0}$ for $u \in C_0(\bar{D})$. Using semi-group theory \cite{abadias2022asymptotic, engel2000one}, we write:
\begin{align}
   \frac{1}{\Delta t}\left[\frac{1}{\Delta t} -\Delta\right]^{-1}u^0(x) = \frac{1}{\Delta t } \int_{0}^{\infty} e^{-t/\Delta t} T(t) u^0 \, dt =: T_{\Delta t} (u^0). \label{eqn:heat_sg}
\end{align}

As $\{T(t)u^0\}_{t\geq 0}$ and $\{T(t)v^0\}_{t\geq 0}$ share the same boundary conditions, the strong maximum principle for the heat equation in bounded domains implies that
\(
    \|T(t) (u^0-v^0)\|_\infty \leq  \|u^0-v^0\|_\infty.
\)
This gives:
\[
\|u^1-v^1\|_\infty \leq  \left\lvert \left\lvert  \frac{1}{\Delta t} \int_{0}^{\infty} e^{-t/\Delta t} T(t)\left(u^0-v^0\right) \, dt \right\rvert \right\rvert_\infty
\leq \frac{\|u^0-v^0\|_\infty}{\Delta t}  \int_{0}^{\infty} e^{-t/\Delta t} \, dt.
\]
The integral term evaluates to $\Delta t$, proving the non-expansiveness of the operator.

\end{proof}

\subsubsection{Implicit Euler with Picard's method} We start by considering the multi-input method \eqref{eqn:pic_iter}. First, we establish that the time-stepping operator $\Psi_{\text{P}}$ is Lipschitz continuous. Since $\Omega_p$ in Assumption \ref{assump:compact_supp} is a compact subset of $C^1(\mathbb{R})$, there exists $L_p \in \mathbb{R}$ such that: 
\begin{equation} \label{eqn:unif_lip}
    |f(x) - f(y)| \leq L_p |x-y|_\infty \ \forall \ x,y \in \mathbb{R}, \ \forall \ f \in \Omega_p.
\end{equation}

\begin{proposition}  \label{prop:pic_lipschitz}  
        Suppose Assumption \ref{assump:compact_supp} holds, and $\Delta t L_p < 1$. The operator \(\Psi_{\text{P}}:  \Omega_0 \times \Omega_p \subset C_0(\bar D) \times C^1(\mathbb{R}) \to C_0(\bar D) ,\)
        defined in \eqref{eqn:pic_iter}, is Lipschitz continuous. Specifically, for $ (u^0,f),\ (v^0,g)\in \Omega_0\times \Omega_p$, we have:
 \begin{align}
 \|\Psi_{\text{P}}(u^0,f) - \Psi_{\text{P}}(v^0,g)\|_\infty \leq \frac{\max\{1,\Delta t\}}{1-L_p \Delta t}\left[\|u^0-v^0\|_\infty +  \|f-g\|_{C^1} \right].
 \label{eqn:lipschitz}
 \end{align}

\end{proposition}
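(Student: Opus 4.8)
The plan is to bound the difference $\|\Psi_{\text{P}}(u^0,f) - \Psi_{\text{P}}(v^0,g)\|_\infty$ by tracking the Picard iterates $u^{(i)}$ (generated from $(u^0,f)$) and $v^{(i)}$ (generated from $(v^0,g)$) simultaneously and deriving a recursive inequality for $e_i := \|u^{(i)} - v^{(i)}\|_\infty$. Writing $L$ for the solution operator of $[I - \Delta t \Delta]$ as in Lemma~\ref{lemma:contract}, each iterate satisfies $u^{(i)} = L(u^0 + \Delta t\, f(u^{(i-1)}))$, so
\[
u^{(i)} - v^{(i)} = L\left( (u^0 - v^0) + \Delta t\,(f(u^{(i-1)}) - g(v^{(i-1)})) \right).
\]
By the non-expansiveness of $L$ (Lemma~\ref{lemma:contract}) and the triangle inequality $\|f(u^{(i-1)}) - g(v^{(i-1)})\|_\infty \le \|f(u^{(i-1)}) - f(v^{(i-1)})\|_\infty + \|f(v^{(i-1)}) - g(v^{(i-1)})\|_\infty \le L_p\, e_{i-1} + \|f - g\|_{C^1}$ (using the uniform Lipschitz bound \eqref{eqn:unif_lip} and $\|\cdot\|_\infty \le \|\cdot\|_{C^1}$ on the compact range of $v^{(i-1)}$), I get the recursion
\[
e_i \le \|u^0 - v^0\|_\infty + \Delta t\, L_p\, e_{i-1} + \Delta t\, \|f - g\|_{C^1}, \qquad e_0 = \|u^0 - v^0\|_\infty.
\]

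Next I would unroll this linear recursion. With $r := \Delta t\, L_p < 1$, iterating gives
\[
e_m \le r^m e_0 + (1 + r + \cdots + r^{m-1})\left( \|u^0 - v^0\|_\infty + \Delta t\, \|f - g\|_{C^1} \right) \le \frac{1}{1 - r}\left( \|u^0 - v^0\|_\infty + \Delta t\, \|f - g\|_{C^1} \right),
\]
after absorbing the $r^m e_0$ term into the geometric sum (since $e_0$ appears with coefficient $r^m + \sum_{j=0}^{m-1} r^j = \sum_{j=0}^{m} r^j \le 1/(1-r)$, one actually gets the slightly cleaner bound $e_m \le \frac{1}{1-r}\|u^0-v^0\|_\infty + \frac{\Delta t}{1-r}\|f-g\|_{C^1}$). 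Bounding $\max\{1, \Delta t\} \ge \max\{1, \Delta t\}$ over the two coefficients $1$ and $\Delta t$ yields exactly \eqref{eqn:lipschitz}, since $\Psi_{\text{P}}(u^0, f) = u^{(m)}$ and $\Psi_{\text{P}}(v^0, g) = v^{(m)}$.

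The one point requiring care — and the main obstacle — is justifying the step $\|f(v^{(i-1)}) - g(v^{(i-1)})\|_\infty \le \|f - g\|_{C^1}$: the $C^1$ norm of $f - g$ is taken over all of $\mathbb{R}$ (or over the relevant interval), so I need an a priori uniform bound on the range of the iterates $v^{(i-1)}$, guaranteeing they take values in a fixed compact set on which $\|f-g\|_\infty \le \|f-g\|_{C^1}$ holds trivially. This follows because $L$ is non-expansive and maps $0$ to $0$, so $\|v^{(i)}\|_\infty \le \|v^0\|_\infty + \Delta t \|g(v^{(i-1)})\|_\infty \le \|v^0\|_\infty + \Delta t(\|g(0)\|_\infty + L_p \|v^{(i-1)}\|_\infty)$; together with compactness of $\Omega_0$ and $\Omega_p$ (Assumption~\ref{assump:compact_supp}) and $r < 1$, this gives a uniform bound on $\sup_i \|v^{(i)}\|_\infty$, so all iterates stay in a common compact interval. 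I would state this boundedness as a short preliminary observation (or fold it into the argument) before running the recursion above. Everything else is the routine unrolling of a contractive linear recursion.
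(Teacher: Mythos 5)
Your proof is essentially the same as the paper's: both derive the recursion $e_i \le \|u^0-v^0\|_\infty + \Delta t\,L_p\,e_{i-1} + \Delta t\,\|f-g\|_\infty$ by applying the non-expansiveness of $[1-\Delta t\Delta]^{-1}$ from Lemma~\ref{lemma:contract} together with the triangle inequality and the uniform Lipschitz bound \eqref{eqn:unif_lip}, then unroll the geometric sum and absorb the coefficients $1$ and $\Delta t$ into $\max\{1,\Delta t\}$. The only cosmetic difference is which term you insert in the triangle inequality (you compare $f(u^{(i-1)})$ to $f(v^{(i-1)})$, the paper compares to $g(u^{(i-1)})$), and the extra paragraph you add about a uniform bound on the iterates' range is unnecessary here because $f,g\in C^1(\mathbb{R})$ and the $C^1$ norm is global, so $\|f-g\|_\infty \le \|f-g\|_{C^1}$ holds by definition regardless of where $v^{(i-1)}$ takes values — a point the paper notes in one line.
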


\begin{proof} 
Let $u^{(i)}= \Phi_{\text{BE}}(u^{(i-1)},u^0,f),\ v^{(i)}(x) = \Phi_{\text{BE}}(v^{(i-1)}, v^0,g)$ for $1\leq i \leq m$,  denote the intermediate iterates, as in \eqref{eqn:implicit}. The last iteration is written in terms of the semi-group operator as:
\begin{equation}
    u^{(m)}(x) = T_{\Delta t} (u^0) + \Delta t T_{\Delta t} (f(u^{(m-1)})), \quad \text{for } m>0.
\end{equation}
Using triangle inequality and linearity of the semi-group operator:
\begin{align}
    \|u^{(m)} -v^{(m)}\|_\infty &\leq \|  T_{\Delta t} (u^0) - T_{\Delta t} (v^0)\|_\infty + \Delta t \| T_{\Delta t} (f(u^{(m-1)}))-  T_{\Delta t} (g(v^{(m-1)}))\|_{\infty} \nonumber \\
     &\leq \|T_{\Delta t}( u^0- v^0)\|_\infty +\Delta t \|T_{\Delta t} [ f(u^{(m-1)})-   g(u^{(m-1)})]\|_{\infty} \nonumber \\ &+\Delta t \| T_{\Delta t}[ g(u^{(m-1)})-  g(v^{(m-1)})]\|_{\infty}.
     \label{eqn:lip1}
\end{align}
Then, using Lemma \ref{lemma:contract}, and the uniform Lipschitz property in \eqref{eqn:unif_lip}:
\begin{align*}
      \|u^{(m)}-v^{(m)}\|_\infty \leq \|u^0 - v^0\|_\infty + \Delta t \|f-g\|_\infty + \Delta t L_p \|u^{(m-1)} - v^{(m-1)}\|_\infty.
\end{align*}
The required bound follows by recursively applying the above argument, giving:
 \begin{align*}
     \|u^{(m)}-v^{(m)}\|_\infty &\leq \sum_{i=0}^{m}(\Delta t L_p)^i \left[\|u^0 -v^0\|_\infty + \Delta t \|f-g\|_\infty\right]\\
    & \leq \max\{1,\Delta t\}[{1-\Delta t L_p}]^{-1} \left[\|u^0 -v^0\|_\infty + \|f-g\|_\infty\right] \text{ if } \Delta t L_p < 1.
 \end{align*}
Since $\|f-g\|_\infty \leq \|f-g\|_{C^1}$ by definition, we get the required result.
\end{proof}

\begin{remark}
    The step-size restriction $\Delta t L_p < 1$ resembles the stability condition for explicit methods. However, this is a reasonable assumption as it ensures that the fixed point map is a contraction, guaranteeing the convergence of this method. 
\end{remark}

Next, we demonstrate the low complexity structure of this operator. Specifically, we aim to represent it as a composition of functions, each involving a linear transformation followed by a low-dimensional non-linearity. These composing functions can be interpreted as the layers of a neural network.

\begin{assumption} \label{assump:pic_encoder}
    The initial condition $u^0 \in C_0(\bar{D})$ is encoded via discretization encoder to $\mathbb{R}^{d_{\Xcal_1}}$.
\end{assumption}

\begin{assumption} \label{assump:pic_r_encoder}
     The reaction function $f \in C^1(\mathbb{R})$ is encoded via basis encoder \eqref{eqn:basis} to $\mathbb{R}^{d_{\Xcal_2}}$.
\end{assumption}

\begin{proposition}  \label{lem:pic_lc}
  Suppose Assumptions \ref{assump:pic_encoder}-\ref{assump:pic_r_encoder} hold. Consider $\Psi_{\text{P}}:  C_0(\bar{D}) \times C^1(\mathbb{R})\to  C_0(\bar{D})$ defined in \eqref{eqn:pic_iter}. This operator exhibits the low complexity structure in Definition~\ref{defn:low_comp}, with $k=2m$, $d_\text{max} = 2$, and $\ell_\text{max} = (d_{\Xcal_2}+1)d_{\Xcal_1} + d_{\Xcal_2}$.
\end{proposition}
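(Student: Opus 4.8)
The plan is to unwind the Picard iteration \eqref{eqn:pic_iter} explicitly as a composition of $2m$ maps, each of which is of the form ``linear map followed by coordinatewise low-dimensional nonlinearity,'' and then to verify that each of these maps fits Definition~\ref{defn:low_comp} with $d_i \le 2$ and $\ell_i \le (d_{\Xcal_2}+1)d_{\Xcal_1} + d_{\Xcal_2}$. The key observation is that one Picard step $u^{(i)} = [1-\Delta t\Delta]^{-1}(u^0 + \Delta t f(u^{(i-1)}))$ naturally splits into two substeps: (a) applying the reaction function pointwise, $w^{(i-1)} = f(u^{(i-1)})$, which is a genuinely low-dimensional nonlinearity, and (b) the affine solve $u^{(i)} = L u^0 + \Delta t L w^{(i-1)}$, which is purely linear once $u^0$ is carried along as part of the state vector. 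So I would carry the augmented state $(E_{\Xcal_1}(u^0), E_{\Xcal_1}(u^{(i-1)}), \mathrm{a})$ through the layers, where $\mathrm{a}\in\mathbb{R}^{d_{\Xcal_2}}$ is the fixed encoding of $f$; this forces $\ell_i = d_{\Xcal_1} + d_{\Xcal_1} + d_{\Xcal_2}$ on the ``linear'' layers. (The stated $\ell_{\max} = (d_{\Xcal_2}+1)d_{\Xcal_1} + d_{\Xcal_2}$ will come from the nonlinear layer, see below.)

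First I would set up notation: fix $u^0$, let $\mathbf{x}_0 = E_{\Xcal_1}(u^0) \in \mathbb{R}^{d_{\Xcal_1}}$ and $\mathrm{a} = (a_0,\dots,a_{d_{\Xcal_2}-1})^\top$ the basis-encoder coefficients, so that under the discretization encoder the pointwise action of $f$ on a grid function with values $\mathbf{y}=(y_1,\dots,y_{d_{\Xcal_1}})$ is $\big(\sum_{t=0}^{d_{\Xcal_2}-1} a_t y_j^t\big)_{j=1}^{d_{\Xcal_1}}$. This is where the $(d_{\Xcal_2}+1)d_{\Xcal_1}$ appears: evaluating $f(u^{(i-1)})$ componentwise requires, for each of the $d_{\Xcal_1}$ grid points, a nonlinear function of the $d_{\Xcal_2}$-dimensional input $((V_j^i)^\top(\text{state}))$ formed by the coefficient vector $\mathrm{a}$ together with the scalar $y_j$; packaging this carefully and also passing $\mathbf{x}_0$ and $\mathrm{a}$ through unchanged (each component of which is a $1$-dimensional ``nonlinearity,'' namely a coordinate projection, contributing $d_{\Xcal_1} + d_{\Xcal_2}$ more slots) gives $\ell_i = (d_{\Xcal_2}+1)d_{\Xcal_1}+d_{\Xcal_2}$ with each $g^i_j$ a function of at most $d_{\Xcal_2}$ (hence $\le d_{\max}$) variables — but wait, to get $d_{\max}=2$ I should instead feed each $g^i_j$ only the pair $(a_t, y_j)$ and handle the sum over $t$ with the subsequent \emph{linear} layer. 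So the nonlinear layer computes all the monomials $a_t y_j^t$ (a $2$-variable nonlinearity $(a,y)\mapsto a y^t$ for each $(t,j)$), which is $d_{\Xcal_2} d_{\Xcal_1}$ outputs, plus $d_{\Xcal_1}+d_{\Xcal_2}$ pass-through coordinates, totaling $(d_{\Xcal_2}+1)d_{\Xcal_1}+d_{\Xcal_2}$; then the following linear layer sums the monomials to form $f(u^{(i-1)})$ and performs the affine solve $\mathbf{x}_i = L\mathbf{x}_0 + \Delta t L(\cdot)$ — here $L$ is represented by its matrix on $\mathbb{R}^{d_{\Xcal_1}}$ under the discretization encoder, which is legitimate since $[1-\Delta t\Delta]^{-1}$ is linear. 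I would then check that the $2m$ alternating layers compose to exactly $D_\Ycal \circ G^{2m}\circ\cdots\circ G^1 \circ E_{\Xcal}$ reproducing $\Pi_{\Ycal,d_\Ycal}\circ\Psi_{\text{P}}$, identify $k=2m$, and note $d_{\max}=2$, $\ell_{\max} = (d_{\Xcal_2}+1)d_{\Xcal_1}+d_{\Xcal_2}$, with all $\ell_i \lesssim \max\{d_{\Xcal},d_\Ycal\}$ as required (since $d_{\Xcal_1},d_{\Xcal_2} \le d_{\Xcal}$, the product term is the one to justify — presumably $d_{\Xcal_2}$, the number of retained reaction-function modes, is treated as an $\mathcal{O}(1)$ or otherwise controlled quantity so that $(d_{\Xcal_2}+1)d_{\Xcal_1} \lesssim d_{\Xcal}$).

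The main obstacle I anticipate is bookkeeping rather than conceptual: making the ``state-augmentation'' precise so that $u^0$ and the coefficient vector $\mathrm{a}$ are genuinely propagated as identity (coordinate-projection) blocks inside the $G^i$ format — which technically requires those pass-through nonlinearities $g^i_j$ to be the identity on $\mathbb{R}^1$, allowed by the definition since $d_i$ need only satisfy $0<d_i\le d_{\Xcal}$ — and so that the very first layer $G^1$ correctly takes $E_{\Xcal}(u^0,f) = [\mathbf{x}_0;\mathrm{a}]$ and produces the augmented state $[\mathbf{x}_0; \mathbf{x}_0; \mathrm{a}]$ (or directly the monomials) while the very last layer outputs something in $\mathbb{R}^{d_\Ycal}$ that $D_\Ycal$ decodes to $u^{(m)}$. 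A secondary point to handle carefully is that Definition~\ref{defn:low_comp} writes the decomposition with a single $D_\Ycal$ at the end, so the final affine solve must land in the $\Ycal$-encoding coordinates; since here $\Ycal = C_0(\bar D)$ with the same discretization encoder as $\Xcal_1$, this matches $d_\Ycal = d_{\Xcal_1}$ and causes no difficulty. I would close by remarking that if the reaction function is encoded instead by the Fourier basis rather than the Taylor basis, the same argument goes through with $a_t y_j^t$ replaced by $a_t T_t(y_j)$, a $2$-variable nonlinearity, so the complexity parameters are unchanged.
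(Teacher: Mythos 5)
Your proposal follows essentially the same construction as the paper's proof: the Picard update is unrolled into alternating pairs of layers, with the odd layers computing the monomials $a_k T_k(\mathrm{u}^{(i-1)}_j)$ via two-variable nonlinearities $(a,y)\mapsto a T_k(y)$ while passing $\mathrm{u}^0$ and $\mathrm{a}$ through as identity blocks (giving $\ell_{\max}=(d_{\Xcal_2}+1)d_{\Xcal_1}+d_{\Xcal_2}$ on those layers), and the even layers performing the purely linear operations of summing the monomials, scaling by $\Delta t$, adding $\mathrm{u}^0$, applying the numerical weight matrix $L$, and carrying the state forward; the parameters $k=2m$, $d_{\max}=2$ then match. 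Your caveat about whether $\ell_{\max}\lesssim\max\{d_\Xcal,d_\Ycal\}$ for growing $d_{\Xcal_2}$ is a fair observation, but it applies equally to the paper's proof and is a tacit assumption there as well.
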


\begin{proof}
The operator $\Psi_{\text{P}}$ consists of $m$ Picard iterations, defined in \eqref{eqn:pic_iter}. We argue that this operator possesses the low complexity structure with each iteration $i$ ($1\leq i\leq m)$ corresponding to two layers of a network.

We demonstrate this for the first iteration. Let $(u^0,f)\in C_0(\bar{D}) \times C^1(\mathbb{R})$. Denote the encoded inputs $\mathrm{u}^0 = E_{\Xcal_1}(u^0) \in \mathbb{R}^{d_{\Xcal_1}}$, and $\mathrm{a} = E_{\Xcal_2}(f) = [a_k]_{k=0}^{p-1} \in \mathbb{R}^{d_{\Xcal_2}}$ (for $p=d_{\Xcal_2}$). The state vector entering the first iteration is $z^{(0)} = [\mathrm{u}^{0}; \mathrm{a}] \in \mathbb{R}^{d_{\Xcal_1} + d_{\Xcal_2}}$.

The first layer $G^{1}$ computes the individual basis terms $p_{j,k} = a_k T_k(\mathrm{u}^{0}_j)$ for $j\in \{1, \ldots, d_{\Xcal_1}\}, k\in \{0, \ldots, p-1\}$, needed for the reaction function approximation $P(\mathrm{u}^{0}; \mathrm{a})= \sum_{k=0}^{p-1} a_k (\mathrm{u}_j^{0})^k$. Linear projections $V^{1}_{j,k} \in \mathbb{R}^{1\times d_\Xcal}$, act on the input state $z^{0}$ to extract the necessary pair $(\mathrm{u}^{0}_j, \mathrm{a}_k)$ for each term. Specifically, the projection $(V^{1}_{j,k})^T z^{(0)}$ selects the $j$-th component from the $\mathrm{u}^{0}$ block and the $k$-th coefficient from the $\mathrm{a}$ block. A corresponding two-dimensional non-linearity $g_{j,k}(x, y) = yT_k(x)$  is applied to this pair $(\mathrm{u}^{0}_j, \mathrm{a}_k)$ to compute $p_{j,k}$. This layer must also retain $\mathrm{u}^0$ and $\mathrm{a}$ for computing subsequent iterates. This is achieved with projections extracting $\mathrm{u}^0$ and $\mathrm{a}$ from $z^{(0)}$ followed by identity functions ($g(v)=v$). The full output of layer $G^{1}$ is the intermediate vector $z' = [\mathrm{p}; \mathrm{u}^0; \mathrm{a}]$, where $\mathrm{p} = [p_{j,k}]$. The dimension of $z'$ is $\ell_1 = d_{\Xcal_1}d_{\Xcal_2} + d_{\Xcal_1} + d_{\Xcal_2}$.

The next layer $G^{2}$ performs a linear update, taking $z'$ as input and outputs the state for the next iteration, $z^{(1)} = [\mathrm{u}^{(1)}; \mathrm{u}^0; \mathrm{a}]$. Let $L$ represents the numerical integration weights of the linear operator $[1-\Delta t \Delta]^{-1}$. Computing $\mathrm{u}^{(1)} = L(\mathrm{u}^0 + \Delta t P(\mathrm{u}^{0}; \mathrm{a}))$ from $z'$ involves summing components of $\mathrm{p}$ (to get $P(\mathrm{u}^{0}; \mathrm{a})$), scaling by $\Delta t$, adding $\mathrm{u}^0$ (extracted from $z'$), and finally, applying the matrix $L$. Additionally, $\mathrm{u}^0$ and $\mathrm{a}$ should be retained. As all these steps are linear operations on $z'$, the required maps for computing each component of $\mathrm{u}^{(1)}$ and passing through $\mathrm{u}^0$ and $\mathrm{a}$ are encoded in matrices $V^{2}_s$ for $1 \leq s \leq 2d_{\Xcal_1} + d_{\Xcal_2} $. This layer uses only identity functions, so that the $s$-th output component is $(V^{2}_s)^T z'$. The output dimension is $\ell_2 = 2d_{\Xcal_1} + d_{\Xcal_2}$.

Extrapolating this two-stage process for $m$ iterations demonstrates the low complexity structure of the operator. The total number of layers is $k=2m$. The overall maximum non-linearity input dimension is $d_{\max} =  2$. The maximum intermediate dimension is $\ell_{\max} = \ell_1 = (d_{\Xcal_2}+1)d_{\Xcal_1} + d_{\Xcal_2}$ [see Figure~\ref{fig:lowcomp}].
\end{proof}

\vspace{-2mm}
\begin{figure}[htbp]
\centering
\includegraphics[width=\linewidth]{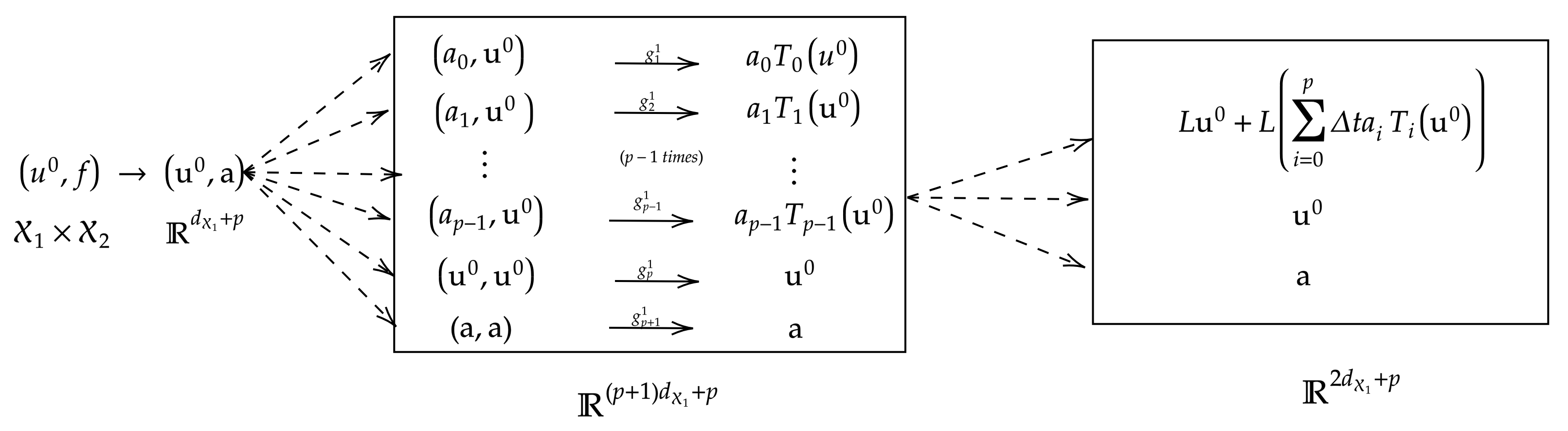}
\vspace{-6mm}
\caption{Schematic representation of the first iteration in Picard's method. Dashed lines indicate linear transformations while full lines indicate non-linearities. In the first layer, $p$ tuplets $(a_i,\mathrm{u}^0)$ for $1\leq i \leq  p$ are obtained, after which the nonlinearity $g^1_i(x,y) = x T_i(y)$ is applied to get $a_iT_i(\mathrm{u}^0)$. Simple linear transformations then give the first iterate of this method.}
  \label{fig:lowcomp}
\end{figure}%
\vspace{-2mm}

As $\Ycal= \Xcal_1$, the same encoder-decoder pairs may be used for both spaces, and by construction $d_\Ycal = d_{\Xcal_1}$. Then, we obtain a bound on the generalization error.

\begin{theorem} \label{thm:gen_err_pic}
Suppose Assumptions \ref{assump:compact_supp} and \ref{assump:pic_encoder}-\ref{assump:pic_r_encoder} hold. Let $\GNN$ be the minimizer of the optimization (\ref{eqn:optimization}) where the target operator $\Psi_{\text{P}}: \Omega_0\times \Omega_p \to \Omega_1$ is defined in \eqref{eqn:pic_iter}. The network architecture $\mathcal{F}_\mathrm{NN}(d_{\Xcal_1}+ d_{\Xcal_2},2mL,{p},M)$ is as defined in (\ref{eqn:FNN}) with parameters:
\begin{equation*}
\begin{aligned}
		Lp =  \Omega \left( (d_{\Xcal_1}+d_{\Xcal_2})^{\frac{1}{4}} n^{\frac{1}{4}} \right)
  \,, M \geq \sqrt{\ell_{\text{max}}} L_{E_{\Ycal}} R_{\Ycal} \,, \text{ for } \ell_\text{max} = (d_{\Xcal_2}+1)d_{\Xcal_1} +d_{\Xcal_2},
\end{aligned}
\end{equation*}	
Define $L_{{P}} =\max\{1,\Delta t\}/(1-L_p \Delta t)$ assuming $L_p \Delta t<1$. Then:
	\begin{equation*}
		\begin{aligned}
\mathcal{E}_{\text{gen}}\lesssim  L_{{P}}^{2} \log(L_{{P}})(\ell_{\text{max}})^{5/2} n^{-\frac{1}{2}} \log n + \mathcal{E}_{\text{proj}} \,.
		\end{aligned}
	\end{equation*}
	The constants in $\lesssim$ and $\Omega(\cdot)$ depend only on $m,p,\ell_\text{max},R_\Ycal,L_{E_{{\Xcal_1}\times {\Xcal_2}}}$ and $L_{D_{{\Xcal_1}\times {\Xcal_2}}}$. 
\end{theorem}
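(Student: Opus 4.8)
The plan is to instantiate the general bound from Theorem~\ref{thm:low_complexity} with the structural quantities identified in Propositions~\ref{prop:pic_lipschitz} and~\ref{lem:pic_lc}, and then simplify. The three ingredients needed by Theorem~\ref{thm:low_complexity} are: (i) Assumptions~\ref{assump:compact_supp}--\ref{assump:low_complexity}, (ii) the Lipschitz constant $L_{\mathcal{S}_{\Delta t}}$ of the target, and (iii) the structural parameters $k$, $d_{\max}$, $\ell_{\max}$ of the low complexity decomposition. Ingredient (i) holds: Assumption~\ref{assump:compact_supp} is assumed in the statement; Assumption~\ref{assump:lip_enco} holds because the discretization encoder (Assumption~\ref{assump:pic_encoder}) and the basis encoder (Assumption~\ref{assump:pic_r_encoder}) are fixed Lipschitz maps vanishing at $0$, and the product-space encoder is their concatenation with $L_{E_\Xcal}, L_{D_\Xcal}$ controlled by the factor-space constants; Assumption~\ref{assump:Lipschitz} is exactly Proposition~\ref{prop:pic_lipschitz}, which gives $L_{\mathcal{S}_{\Delta t}} = L_{\text{P}} = \max\{1,\Delta t\}/(1-L_p\Delta t)$; and Assumption~\ref{assump:low_complexity} is exactly Proposition~\ref{lem:pic_lc}, which furnishes $k = 2m$, $d_{\max} = 2$, $\ell_{\max} = (d_{\Xcal_2}+1)d_{\Xcal_1} + d_{\Xcal_2}$.

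Next I would substitute $d_{\max} = 2$ into the generic rates of Theorem~\ref{thm:low_complexity}. For the architecture condition, $d_{\Xcal}^{(4-d_{\max})/(4+2d_{\max})} n^{d_{\max}/(4+2d_{\max})} = d_{\Xcal}^{2/8} n^{2/8} = (d_{\Xcal_1}+d_{\Xcal_2})^{1/4} n^{1/4}$ since here $d_\Xcal = d_{\Xcal_1} + d_{\Xcal_2}$; this is precisely the stated condition on $Lp$. The width bound $M \geq \sqrt{\ell_{\max}} L_{E_\Ycal} R_\Ycal$ carries over verbatim (using $\Ycal = \Xcal_1$, so $L_{E_\Ycal} = L_{E_{\Xcal_1}}$, $R_\Ycal = L_{\text{P}} R_\Xcal$). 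For the error bound, the exponent on $n$ becomes $-2/(2+d_{\max}) = -2/4 = -1/2$, and the exponent on $\ell_{\max}$ becomes $(8+d_{\max})/(2+d_{\max}) = 10/4 = 5/2$; the $\log L_{\mathcal{S}_{\Delta t}}$ and $\log n$ factors and the additive $\mathcal{E}_{\text{proj}}$ term are inherited unchanged, and $\mathcal{S}_{\Delta t}$ is renamed $\Psi_{\text{P}}$ with $L_{\mathcal{S}_{\Delta t}} = L_{\text{P}}$. This yields exactly $\mathcal{E}_{\text{gen}} \lesssim L_{\text{P}}^2 \log(L_{\text{P}}) \ell_{\max}^{5/2} n^{-1/2} \log n + \mathcal{E}_{\text{proj}}$.

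Finally I would address the dependence of the hidden constants. Theorem~\ref{thm:low_complexity} states the constants in $\lesssim$ and $\Omega(\cdot)$ depend only on $k, p, \ell_{\max}, R_\Ycal, L_{E_\Ycal}, L_{D_\Ycal}, L_{E_\Xcal}, L_{D_\Xcal}$; here $k = 2m$ so dependence on $k$ is dependence on $m$, and since $\Xcal = \Xcal_1 \times \Xcal_2$ and $\Ycal = \Xcal_1$ all the encoder/decoder Lipschitz constants are determined by $L_{E_{\Xcal_1 \times \Xcal_2}}$ and $L_{D_{\Xcal_1\times\Xcal_2}}$, which collapses the list to $m, p, \ell_{\max}, R_\Ycal, L_{E_{\Xcal_1\times\Xcal_2}}, L_{D_{\Xcal_1\times\Xcal_2}}$ as stated. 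I do not anticipate a genuine obstacle here; the only mild subtlety is bookkeeping the product-space encoder-decoder constants (verifying that concatenation preserves the Lipschitz and zero-preservation properties with the sum norm on $\Xcal_1\times\Xcal_2$) and confirming that the image set $\Omega_1 = \Psi_{\text{P}}(\Omega_0\times\Omega_p)$ is bounded by $R_\Ycal = L_{\text{P}} R_\Xcal$ so that the pushforward measure in $\mathcal{E}_{\text{proj}}$ is compactly supported as the theorem requires — both follow immediately from Assumption~\ref{assump:compact_supp} and Proposition~\ref{prop:pic_lipschitz}.
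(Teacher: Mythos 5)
Your proposal is correct and follows exactly the same route as the paper: the paper's proof is a one-line appeal to Theorem~\ref{thm:low_complexity} combined with Propositions~\ref{prop:pic_lipschitz} and~\ref{lem:pic_lc}, and you have simply spelled out the substitutions ($d_{\max}=2$, $k=2m$, $\ell_{\max}=(d_{\Xcal_2}+1)d_{\Xcal_1}+d_{\Xcal_2}$, $L_{\mathcal{S}_{\Delta t}}=L_{\mathrm{P}}$) that the paper leaves implicit. The arithmetic for the exponents ($(4-2)/(4+4)=1/4$, $-2/(2+2)=-1/2$, $(8+2)/(2+2)=5/2$) and the bookkeeping of the hidden constants through $\Ycal=\Xcal_1$ and $\Xcal=\Xcal_1\times\Xcal_2$ all check out.
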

\begin{proof}
     The theorem follows as a consequence of Theorem \ref{thm:low_complexity}, using the Lipshcitz property in Proposition \ref{prop:pic_lipschitz} and the low complexity structure demonstrated in \ref{lem:pic_lc}.
\end{proof}

The key take-away from Theorem \ref{thm:gen_err_pic} is the polynomial dependence on $d_{\Xcal_1 \times \Xcal_2}$ implying that this neural operator is free of CoD, unlike traditional methods. The result, unsurprisingly, implies that a smaller time-step lowers the generalization error.

\begin{remark}
    The projection error $\mathcal{E}_{proj}$ depends on the basis expansion chosen for the reaction function. A higher encoding dimension lowers the projection error while increasing the complexity, and there remains a need to optimize this trade-off.
\end{remark}

Adapting the proof techniques from Propositions \ref{prop:pic_lipschitz}–\ref{lem:pic_lc} yields a corresponding generalization error bound for the single-input target operator $\Phi_{\text{P}}:\Omega_0\to C_0(D)$. The encoding dimension $d_{\Xcal}$ is thus $d_{\Xcal_1}$.

\begin{theorem}
    Suppose Assumptions \ref{assump:compact_supp} and \ref{assump:pic_encoder} hold. Let $f\in C^1(\mathbb{R})$ be $L_f-$ Lipschitz continuous. Let $\GNN$ be the minimizer of the optimization (\ref{eqn:optimization}) where the target operator $\Phi_{f,P}: \Omega_0 \to C_0(\bar D)$ is defined in \eqref{eqn:pic_iter}. The network architecture $\mathcal{F}_\mathrm{NN}(d_{\Xcal},2mL,{p},M)$ is as defined in (\ref{eqn:FNN}) with parameters:
\begin{equation*}
\begin{aligned}
		Lp =  \Omega \left( (d_{\Xcal}^{\frac{1}{2}} n^{\frac{1}{6}} \right)
  \,, M \geq \sqrt{\ell_{\text{max}}} L_{E_{\Ycal}} R_{\Ycal} \,, \text{ for } \ell_\text{max} =2d_{\Xcal} ,
\end{aligned}
\end{equation*}	
Define $L_{{P}} =1/(1-L_f \Delta t)$ assuming $L_f\Delta t<1$. Then:
	\begin{equation*}
		\begin{aligned}
\mathcal{E}_{\text{gen}}\lesssim  L_{{P}}^{2} \log(L_{{P}})(\ell_{\text{max}})^{3} n^{-\frac{2}{3}} \log n + \mathcal{E}_{\text{proj}} \,.
		\end{aligned}
	\end{equation*}
	The constants in $\lesssim$ and $\Omega(\cdot)$ depend only on $m,p,\ell_\text{max},R_\Ycal,L_{E_{{\Xcal}}}$ and $L_{D_{{\Xcal}}}$. 
\end{theorem}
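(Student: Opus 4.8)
The plan is to mirror the structure of the multi-input case (Theorem \ref{thm:gen_err_pic}), specializing to a fixed reaction function $f \in C^1(\mathbb{R})$. The proof of the final theorem reduces to invoking Theorem \ref{thm:low_complexity}, so the real content is: (i) verify Assumption \ref{assump:Lipschitz} by computing the Lipschitz constant $L_{\Phi_{f,P}}$ of the single-input Picard operator $\Phi_{f,P}:\Omega_0 \to C_0(\bar D)$, and (ii) verify Assumption \ref{assump:low_complexity} by exhibiting its low-complexity structure with the claimed parameters $k=2m$, $d_{\max}=2$, $\ell_{\max}=2d_\Xcal$, and then (iii) plug these into Theorem \ref{thm:low_complexity} and simplify the exponents to match the stated $Lp = \Omega(d_\Xcal^{1/2} n^{1/6})$ and $\mathcal{E}_{\text{gen}} \lesssim L_P^2\log(L_P)\ell_{\max}^3 n^{-2/3}\log n + \mathcal{E}_{\text{proj}}$.

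\textbf{Step (i): Lipschitz estimate.} I would repeat the argument of Proposition \ref{prop:pic_lipschitz} with $g=f$, so the term $\Delta t\|T_{\Delta t}[f(u^{(m-1)})-g(u^{(m-1)})]\|_\infty$ vanishes. Writing $u^{(i)} = T_{\Delta t}(u^0) + \Delta t\, T_{\Delta t}(f(u^{(i-1)}))$ and using Lemma \ref{lemma:contract} together with the $L_f$-Lipschitz bound on $f$, one gets $\|u^{(m)}-v^{(m)}\|_\infty \le \|u^0-v^0\|_\infty + \Delta t L_f\|u^{(m-1)}-v^{(m-1)}\|_\infty$. Recursing over the $m$ iterations and summing the geometric series yields $\|u^{(m)}-v^{(m)}\|_\infty \le \sum_{i=0}^m (\Delta t L_f)^i \|u^0-v^0\|_\infty \le (1-L_f\Delta t)^{-1}\|u^0-v^0\|_\infty$ when $L_f\Delta t < 1$, so $L_{\Phi_{f,P}} \le L_P := 1/(1-L_f\Delta t)$. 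This is strictly simpler than the multi-input bound since there is no $\max\{1,\Delta t\}$ factor (no $\|f-g\|$ contribution).

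\textbf{Step (ii): low-complexity structure.} I would adapt the proof of Proposition \ref{lem:pic_lc}. Here the input is only $\mathrm{u}^0 = E_\Xcal(u^0)\in\mathbb{R}^{d_\Xcal}$; the coefficients $a_k$ of $f$ are now \emph{fixed constants} baked into the network weights rather than inputs, so there is no need to carry an $\mathrm{a}$-block through the layers. For each Picard iteration: one layer applies, for each grid index $j$, the one-dimensional nonlinearity $x\mapsto f(x)$ (or equivalently $x\mapsto \sum_k a_k x^k$, handled by a fixed-degree polynomial activation, so $d_{\max}=2$ suffices exactly as in the figure, or even $d_{\max}=1$ if we treat $f$ as a single scalar nonlinearity — I would keep $d_{\max}=2$ to align with Definition \ref{defn:low_comp} as used in the multi-input case) together with identity maps retaining $\mathrm{u}^0$, giving intermediate dimension $\le 2d_\Xcal$; the next layer performs the linear update $\mathrm{u}^{(i)} = L(\mathrm{u}^0 + \Delta t\, f(\mathrm{u}^{(i-1)}))$ plus passing through $\mathrm{u}^0$, again with dimension $2d_\Xcal$ and only identity nonlinearities. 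Over $m$ iterations this gives $k=2m$ layers, $d_{\max}=2$, and $\ell_{\max}=2d_\Xcal$, and since $\Ycal = \Xcal$ we use the same encoder-decoder with $d_\Ycal = d_\Xcal$.

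\textbf{Step (iii): assembling the bound.} With $d_{\max}=2$, Theorem \ref{thm:low_complexity} gives the exponent on $n$ in the architecture requirement as $d_{\max}/(4+2d_{\max}) = 2/8 = 1/4$ and on $d_\Xcal$ as $(4-d_{\max})/(4+2d_{\max}) = 2/8 = 1/4$; however, I need to double-check the arithmetic against the stated $Lp = \Omega(d_\Xcal^{1/2} n^{1/6})$ and $n^{-2/3}$ rate, which corresponds instead to $d_{\max}=1$ (then $d_{\max}/(4+2d_{\max}) = 1/6$, $(4-d_{\max})/(4+2d_{\max}) = 3/6 = 1/2$, and $2/(2+d_{\max}) = 2/3$, with $\ell_{\max}$-exponent $(8+d_{\max})/(2+d_{\max}) = 9/3 = 3$). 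So the cleaner route is to observe that in the single-input case each reaction evaluation $u \mapsto f(u)$ is genuinely a \emph{one}-dimensional nonlinearity, hence $d_{\max}=1$, which is what produces the claimed exponents; I would state the structure with $d_{\max}=1$. Then substituting $d_{\max}=1$, $\ell_{\max}=2d_\Xcal$, and $L_{\Phi_{f,P}} \le L_P$ into Theorem \ref{thm:low_complexity} yields exactly $\mathcal{E}_{\text{gen}} \lesssim L_P^2\log(L_P)\ell_{\max}^3 n^{-2/3}\log n + \mathcal{E}_{\text{proj}}$ with architecture $Lp = \Omega(d_\Xcal^{1/2} n^{1/6})$, and the constants depend only on $m,p,\ell_{\max},R_\Ycal,L_{E_\Xcal},L_{D_\Xcal}$ as claimed. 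The main obstacle is the bookkeeping in Step (ii) — correctly identifying the nonlinearity dimension $d_{\max}$ and keeping the intermediate dimensions at $2d_\Xcal$ rather than letting them blow up — since this is precisely what controls the final exponents; the Lipschitz estimate and the final substitution are routine once Propositions \ref{prop:pic_lipschitz} and \ref{lem:pic_lc} and Theorem \ref{thm:low_complexity} are in hand.
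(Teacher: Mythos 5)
Your proof is correct and takes essentially the same approach as the paper: set $g=f$ in Proposition~\ref{prop:pic_lipschitz} to get $L_P = 1/(1-L_f\Delta t)$, identify the low-complexity structure with $\ell_{\max}=2d_\Xcal$ and $d_{\max}=1$ (since the fixed reaction function $f$ is a genuine one-dimensional nonlinearity, unlike the multi-input case where the coefficients must be extracted from the input), and apply Theorem~\ref{thm:low_complexity}. Your momentary inclination toward $d_{\max}=2$ was a potential pitfall, but you correctly diagnosed and resolved it, landing on exactly the parameters the paper uses.
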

\begin{proof}
    The proof follows the same idea as Theorem \ref{thm:be_para}. The operator is Lipschitz continuous with constant $1/(1-L_f \Delta t)$. This estimate holds by setting $g=f$ in Proposition \ref{prop:pic_lipschitz}. For the low complexity structure, the only non-linearity the network learns is the one-dimensional non-linearity of the reaction function $f$. The maximal width is from computing intermediate values $[\mathrm{u}^0;f(\mathrm{u}^{(i)}] \in \mathbb{R}^{2d_\Xcal}$ for $\mathrm{u}^0= E_\Xcal(u^0), \mathrm{u}^{(i) }=E_\Xcal(u^{(i)})$. This structure reveals $\ell_{max}=2d_\Xcal$ and $d_{\max }= 1$ giving the desired result.
\end{proof}

In practice, the stepsize restriction of Picard's method ruins the nice stability properties of the implicit method. Newton's method offers an alternative and its effectiveness for stiff ODEs is well-studied \cite{liniger1970efficient}.

\subsubsection{Implicit Euler with Newton's method}
For simplicity, we focus on the single-input case for the operator corresponding to this algorithm. However, the arguments used here easily extend to the multi-input scenario.

 \begin{assumption}\label{assump:lipchitz_reaction}
     The reaction function $f \in C^1(\mathbb{R})$ is such that $f$, $f'$ are Lipschitz with Lipschitz constants $L_f, L_{f'}>0$, respectively.
 \end{assumption}

  \begin{assumption} \label{assump:decreasing_reaction}
The reaction function $f$ is decreasing, i.e., $f'(x)\leq 0, \ \forall \ x \in \mathbb{R}$.
 \end{assumption}

 \begin{lemma} \label{lemma:newton_kant}
     Suppose Assumptions \ref{assump:compact_supp}, \ref{assump:lipchitz_reaction}-\ref{assump:decreasing_reaction} hold. Then, for all $u^0\in \Omega_0$,
     \begin{enumerate}[label=(\alph*)]
         \item  The operator $\Phi_{u^0}:C_0(\bar{D}) \to C_0(\bar{D})$ defined in \eqref{eqn:root_newt} is differentiable.
         \item For all $w \in C_0(\bar{D})$, $D\Phi_{u^0}(w)$ is invertible, and satisfies $\|[D\Phi_{u^0}(w)]^{-1}\|_\infty \leq 2$.
         \item For all $ w_1, w_2 \in C_0(\bar{D})$, the following bound holds:
         \begin{align}
         \label{eqn:inv_lip} \|D\Phi_{u^0}(w_1) - D\Phi_{u^0}(w_2)\|_\infty \leq L_{f'} \Delta t \|w_1 -w_2\|_\infty\ .
     \end{align}
     \end{enumerate}
     
 \end{lemma}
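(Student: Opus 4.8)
The plan is to prove the three claims in order, working with the explicit formula $\Phi_{u^0}(u) = u - [1-\Delta t\Delta]^{-1}u^0 - [1-\Delta t\Delta]^{-1}\Delta t\, f(u)$ from \eqref{eqn:root_newt} and exploiting that $f\in C^1(\mathbb{R})$ with $f,f'$ Lipschitz (Assumption \ref{assump:lipchitz_reaction}) and $f'\le 0$ (Assumption \ref{assump:decreasing_reaction}).

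\textbf{Part (a): Differentiability.} The first term is the identity, and the middle term $[1-\Delta t\Delta]^{-1}u^0$ is a constant in $u$, so the only nontrivial contribution is the Nemytskii (superposition) operator $u\mapsto f(u)$ composed with the bounded linear operator $L := [1-\Delta t\Delta]^{-1}$ (which is bounded on $C_0(\bar D)$ by Lemma \ref{lemma:contract}). Since $f\in C^1$ with $f'$ Lipschitz, the Nemytskii operator on $C_0(\bar D)$ is Fréchet differentiable with derivative $h\mapsto f'(u)\,h$ (pointwise multiplication); I would verify this via the estimate $\|f(u+h) - f(u) - f'(u)h\|_\infty \le \tfrac12 L_{f'}\|h\|_\infty^2$, a consequence of Taylor's theorem with the Lipschitz bound on $f'$. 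Composing with the bounded linear $\Delta t\, L$ and subtracting from the (affine) remaining terms gives $D\Phi_{u^0}(w)h = h - \Delta t\, L(f'(w)\,h)$.

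\textbf{Part (b): Invertibility and the bound $\|[D\Phi_{u^0}(w)]^{-1}\|_\infty \le 2$.} From (a), $D\Phi_{u^0}(w) = I - \Delta t\, L\,M_{f'(w)}$ where $M_{f'(w)}$ is multiplication by $f'(w)$. The natural route is a Neumann series: I would like $\|\Delta t\, L\,M_{f'(w)}\|_\infty < 1$, but the naive bound gives only $\Delta t\,\|f'\|_\infty$, which need not be small. Instead I would use the sign condition $f'\le 0$: the operator $A := 1 - \Delta t\Delta - \Delta t\, M_{f'(w)}$ has the form "(positive definite elliptic) $+$ (nonnegative zeroth-order term)", so the maximum principle applies to $A$ directly and $A^{-1}$ is a contraction (nonexpansive) on $C_0(\bar D)$ by the same semigroup/maximum-principle argument as in Lemma \ref{lemma:contract} — in fact the extra nonnegative potential only improves the decay. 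Then $D\Phi_{u^0}(w) = L\,A = [1-\Delta t\Delta]^{-1}A$, so $[D\Phi_{u^0}(w)]^{-1} = A^{-1}[1-\Delta t\Delta]$. Writing $[1-\Delta t\Delta] = A + \Delta t\, M_{f'(w)}$ gives $[D\Phi_{u^0}(w)]^{-1} = I + \Delta t\, A^{-1}M_{f'(w)}$; bounding $\|A^{-1}\|_\infty\le 1$ still leaves $\Delta t\,\|f'\|_\infty$, so I would instead note $[D\Phi_{u^0}(w)]^{-1} = I + A^{-1}(\Delta t M_{f'(w)})$ and that $A^{-1}(\Delta t M_{f'(w)} v) = A^{-1}(A - (1-\Delta t\Delta))v \cdot(-1)\dots$ — more cleanly, write $\Delta t\, L\, M_{f'(w)} = I - L\,A$, so $D\Phi = LA$ and the series $\sum_{k\ge 0}(I - LA)^k$ converges because $\|I - LA\|_\infty = \|\Delta t\, L M_{f'(w)}\|_\infty$; to get the constant $2$ one uses a smallness hypothesis on $\Delta t$ relative to $\|f'\|_\infty$ (implicitly $\Delta t\|f'\|_\infty \le 1/2$), which I suspect is the intended reading. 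This is the step I expect to be the main obstacle: pinning down exactly which combination of the maximum-principle argument and a step-size restriction yields the clean constant $2$, and I would either invoke a mild $\Delta t$ restriction or sharpen the maximum-principle estimate for $A^{-1}$ to absorb the zeroth-order term.

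\textbf{Part (c): Lipschitz continuity of $D\Phi_{u^0}$.} This is the most direct: $D\Phi_{u^0}(w_1) - D\Phi_{u^0}(w_2) = -\Delta t\, L\,(M_{f'(w_1)} - M_{f'(w_2)})$, acting on $h$ as $-\Delta t\, L\big((f'(w_1) - f'(w_2))\,h\big)$. Taking operator norms, using $\|L\|_\infty \le 1$ (Lemma \ref{lemma:contract}), the submultiplicativity $\|M_{g}\|_{\infty\to\infty} = \|g\|_\infty$, and the Lipschitz bound $\|f'(w_1) - f'(w_2)\|_\infty \le L_{f'}\|w_1 - w_2\|_\infty$ yields exactly $\|D\Phi_{u^0}(w_1) - D\Phi_{u^0}(w_2)\|_\infty \le L_{f'}\Delta t\,\|w_1 - w_2\|_\infty$, which is \eqref{eqn:inv_lip}. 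No obstacle here beyond assembling the pieces already established in (a).
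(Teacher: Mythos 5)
Parts (a) and (c) are correct and match the paper's approach (direct computation of the Fr\'echet derivative, contractivity of $[1-\Delta t\Delta]^{-1}$ from Lemma~\ref{lemma:contract}, Lipschitz bound on $f'$). The issue is part (b), where you have correctly identified that your argument is incomplete but guessed the wrong resolution.

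You suspect that a step-size restriction $\Delta t\|f'\|_\infty\lesssim 1$ is the ``intended reading.'' It is not: the paper obtains the constant $2$ unconditionally under Assumption~\ref{assump:decreasing_reaction} alone, and in fact a remark immediately after the lemma explicitly rejects the step-size restriction as undesirable for Newton's method. The paper's device is the auxiliary function. Given $D\Phi_{u^0}(w)h=g$, set $v:=h-g=[I-\Delta t\Delta]^{-1}(\Delta t\,f'(w)h)$. Applying $I-\Delta t\Delta$ and substituting $h=v+g$ gives $v-\Delta t\Delta v=\Delta t\,f'(w)(v+g)$. A maximum-principle argument at an interior maximum $x_0$ of $v$ with $v(x_0)>0$ forces $v(x_0)<-g(x_0)\le\|g\|_\infty$ (because the left side is positive while $f'\le 0$ makes the right side positive only if $v(x_0)+g(x_0)<0$), and the boundary case uses $h|_{\partial D}=0$; together with the symmetric estimate for the minimum this yields $\|v\|_\infty\le\|g\|_\infty$ and hence $\|h\|_\infty\le 2\|g\|_\infty$. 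In your notation this is precisely the sharpening you gestured at: the operator $A^{-1}\Delta t\,M_{f'(w)}$, which is what your factorization $[D\Phi_{u^0}(w)]^{-1}=I+A^{-1}\Delta t\,M_{f'(w)}$ requires you to bound, maps $g$ to exactly this $v$, and the maximum principle applied to the combined equation $(I-\Delta t\Delta-\Delta t\,f'(w))v=\Delta t\,f'(w)g$ gives $\|A^{-1}\Delta t\,M_{f'(w)}\|_\infty\le 1$ rather than the naive $\Delta t\|f'\|_\infty$, because the favorable zeroth-order term $-\Delta t\,f'(w)\ge 0$ on the left exactly dominates the factor $\Delta t\,f'(w)$ on the right. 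You should carry out that one estimate rather than fall back on a restriction the paper deliberately avoids.
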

\begin{proof}
    Recall the definition of the operator in \eqref{eqn:root_newt}:
\[
\Phi_{u^0}(w) := w- [1-\Delta t \Delta ]^{-1} {u^0} - [1-\Delta t \Delta ]^{-1} \Delta t f(w).
\]
To prove (a), we compute its Fr\'echet derivative's action on $h \in C_0(\bar{D})$ as: 
\begin{align}
  D\Phi_{u^0}(w)h &= [I-D\Psi(w)]h \label{eqn:frechet},\\ \text{ where } \Psi(w) = [1-\Delta t \Delta]^{-1}(\Delta t f(w)), &\text{ and } 
  (D\Psi(w))h =  [1-\Delta t \Delta]^{-1}(\Delta t f'(w)h), \nonumber
\end{align}
Notably, the Fr\'echet derivative $D\Phi_{u^0}(w)$ is independent of $u^0$. Observe that the operator norm $\|D\Phi_u(w)\|_\infty$ for any $u,\ w \in C_0(\bar{D})$ is bounded:
\begin{align}
    \|D\Phi_u(w)\|_\infty &= \|I - D\Psi(w)\|_\infty \leq 1 + \sup_{\substack{h\in C_0(\bar{D}),\\
    \|h\|_\infty=1}} \|(1-\Delta t \Delta)^{-1}[ \Delta t f'(w)h]\|_\infty  \nonumber \\
    &\leq 1+\Delta t \|f'\|_\infty   \leq 1+\Delta t L_f, \label{eqn:grad_bound}
\end{align} 
where we have used the non-expansiveness of $[1-\Delta t \Delta]^{-1}$ (refer Lemma \ref{lemma:contract}), and the Lipschitz property of the reaction function. 

For Dirichlet boundary conditions, the negative Laplacian $-\Delta$ is positive-definite. Assumption \ref{assump:decreasing_reaction} then implies that $D\Psi$ is a negative-definite operator. Hence, $D\Phi_u(w)$ $\forall \ u,\ w \in C_0(\bar{D})$ is positive-definite and consequently, invertible. 

To prove the bound in (b), let $D\Phi_u(w)h = g$ for $h \in C_0(\bar{D})$. 
Define an auxiliary function $v(x) = h(x) - g(x)$. By the definition of $D\Phi_u(w)$:
\[
v = h - g = [I - \Delta t \Delta]^{-1}(\Delta t f'(w)h).
\]
Applying the operator $(I - \Delta t \Delta)$ to both sides yields the equation:
\begin{align*}
    (I - \Delta t \Delta)v &= \Delta t f'(w)h 
\implies v - \Delta t \Delta v = \Delta t f'(w)(v+g).
\end{align*}
First, we claim that $\sup_{\bar{D}} v \le \|g\|_\infty$. Let $v$ attain its maximum at a point $x_0 \in \bar{D}$. If $x_0 \in \partial D$, then since $h(x_0) = 0$, we have $v(x_0) = -g(x_0) \le \|g\|_\infty$ proving our claim.
     
Else, suppose $x_0 \in D$ and $v(x_0)>0$.  Evaluating the PDE at $x_0$, we get:
    \[
    {v(x_0) - \Delta t \Delta v(x_0)} = \Delta t f'(w(x_0))(v(x_0)+g(x_0)).
    \]
    As $x_0$ is an interior maximum, $\Delta v(x_0) \le 0$, and hence the term on the left is positive. Since $\Delta t > 0$ and $f'(w) \le 0$, the right-hand side can only be positive if $v(x_0)+g(x_0) < 0$, or in other words, $v(x_0)  < -g(x_0) \le \|g\|_\infty.$

Using a similar argument, we can show that $\inf_{\bar{D}} v \ge -\|g\|_\infty$, which establishes that for all $x \in \bar{D}$, $-\|g\|_\infty \le v(x) \le \|g\|_\infty$. This implies:
\[
\sup_{\bar{D}} h = \sup_{\bar{D}}(v+g) \le \sup_{\bar{D}} v + \sup_{\bar{D}} g \le \|g\|_\infty + \|g\|_\infty = 2\|g\|_\infty.
\]
and similarly, $\inf_{\bar{D}} h \ge -2\|g\|_\infty$.
Consequently, $\|h\|_\infty \le 2\|g\|_\infty$, and hence,
\begin{align}
    \|[D\Phi_u(w)]^{-1}\|_\infty \leq 2 \ \forall \ u,\ w \in C_0(\bar{D}).\label{eqn:inv_grad_bd}
\end{align}
Finally, we need to demonstrate \eqref{eqn:inv_lip}. For $w_1, w_2 \in C_0(\bar{D})$, applying \eqref{eqn:frechet}:
\begin{align*}
    \|D \Phi_{u^0}(w_1)) - D\Phi_{u^0}(w_2)\|_\infty &\leq  \sup_{\substack{h\in C_0(\bar{D}),\\
    \|h\|_\infty=1}} \|[1-\Delta t \Delta]^{-1}[\Delta t (f'(w_1)-f'(w_2))h]\|_\infty \nonumber \\
    &\leq L_{f'} \Delta t \|w_1 -w_2\|_\infty, 
\end{align*}
Here, we have again used the contractivity in Lemma \ref{lemma:contract}, along with Assumption \ref{assump:lipchitz_reaction}. This proves the result.
\end{proof}

By Lemma \ref{lemma:newton_kant},  $D\Phi_{u}(w), \forall \  u, w \in C_0(\bar{D})$ is a bounded linear operator.  If invertibility is assumed, $\|(D\Phi_{u^0}(w))^{-1}\|_\infty$ is bounded by the Bounded Inverse Theorem. But an explicit bound on the norm is difficult to obtain, motivating Assumption \ref{assump:decreasing_reaction}.

\begin{remark}
The condition $\Delta t L_f < 1$ also ensures invertibility of $D\Phi_{u^0}(w)$, with the Neumann series expansion of the inverse providing a bound similar to \eqref{eqn:inv_grad_bd}. This stability restriction, however, is often not set for Newton's methods in practice. 
\end{remark}

  As a consequence of Lemma \ref{lemma:newton_kant}, Newton's method converges, provided the initial iterate is sufficiently close to the exact solution.

\begin{assumption}\label{assump:init_iterate}
  The input-space $\Omega_0$ satisfies the constraint, \[L_{f'}\Delta t \sup_{u^0\in \Omega_0}\|\Phi_{\text{BE}}(u^0)-u^0\|_\infty \leq \sqrt{2}-1.\]
\end{assumption}
Note that $\|\Phi_{\text{BE}}(u^0)-u^0\|_\infty$ is $\mathcal{O}(\Delta t)$ from \eqref{eqn:implicit}. 
Under Assumptions \ref{assump:lipchitz_reaction}-\ref{assump:init_iterate}, Newton's method converges by the Kantorovich theorem \cite{liniger1970efficient}.

\begin{proposition} \label{lemma:newt_lipschitz} 
      Suppose Assumptions \ref{assump:compact_supp} and  \ref{assump:lipchitz_reaction}-\ref{assump:init_iterate}  hold. The operator $\Phi_{\text{N}}:  \Omega_0 \subset (C_0(\bar{D}),\|\cdot\|_\infty) \to (C_0(\bar{D}),\|\cdot\|_\infty)$, defined in \eqref{eqn:newton}, is Lipschitz with:
     
     \begin{align}
     \|\Phi_{\text{N}}(u^0)- \Phi_{\text{N}}(v^0)\|_\infty \leq L_N\|u^0-v^0\|_\infty
     \label{eqn:n_lipschitz},
     \end{align}
     where $L_N = K^m + \frac{2(K^m-1)}{K-1}$ with $K = 3 + 2\Delta t L_f + \mathcal{O}(\Delta t^{2})$ for $m$ Newton iterations. The constants in the term $\mathcal{O}(\Delta t^{2})$ only depend on $\Omega_0,D$, and $f$. 
\end{proposition}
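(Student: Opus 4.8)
The plan is to compare the two sequences of Newton iterates generated from $u^0$ and $v^0$, derive a linear recursion for the distance between them, and then unroll it over the $m$ iterations. Write $u^{(i)}$ and $v^{(i)}$ for the iterates in \eqref{eqn:newton} with initial data $u^0,v^0\in\Omega_0$, put $\delta:=\|u^0-v^0\|_\infty$ and $a_i:=\|u^{(i)}-v^{(i)}\|_\infty$ (so $a_0=\delta$), and abbreviate $L_{\Delta t}:=[1-\Delta t\Delta]^{-1}$, which is non-expansive on $C_0(\bar D)$ by Lemma \ref{lemma:contract}. The first step is to expand the Newton update difference
\[
u^{(i)}-v^{(i)}=(u^{(i-1)}-v^{(i-1)})-\Bigl([D\Phi_{u^0}(u^{(i-1)})]^{-1}\Phi_{u^0}(u^{(i-1)})-[D\Phi_{v^0}(v^{(i-1)})]^{-1}\Phi_{v^0}(v^{(i-1)})\Bigr)
\]
and split the bracketed term using $B^{-1}x-C^{-1}y=B^{-1}(x-y)+(B^{-1}-C^{-1})y$ together with $B^{-1}-C^{-1}=B^{-1}(C-B)C^{-1}$. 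Crucially, as observed in the proof of Lemma \ref{lemma:newton_kant}, $D\Phi_{u^0}(w)=I-D\Psi(w)$ does not depend on $u^0$, so the bound $\|[D\Phi(w)]^{-1}\|_\infty\le 2$ (part (b)) and the Lipschitz estimate \eqref{eqn:inv_lip} (part (c)) are available throughout.

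Next I would estimate the two resulting pieces. Using non-expansiveness of $L_{\Delta t}$ and the $L_f$-Lipschitz property of $f$ from Assumption \ref{assump:lipchitz_reaction},
\[
\|\Phi_{u^0}(u^{(i-1)})-\Phi_{v^0}(v^{(i-1)})\|_\infty\le(1+\Delta t L_f)\,a_{i-1}+\delta,
\]
so after applying $\|[D\Phi]^{-1}\|_\infty\le 2$ the first piece contributes at most $2(1+\Delta t L_f)a_{i-1}+2\delta$. For the second piece, combining parts (b) and (c) of Lemma \ref{lemma:newton_kant} gives $\|[D\Phi(u^{(i-1)})]^{-1}-[D\Phi(v^{(i-1)})]^{-1}\|_\infty\le 4L_{f'}\Delta t\,a_{i-1}$, and it remains to bound the residual $\|\Phi_{v^0}(v^{(i-1)})\|_\infty$. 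The key is that the Newton residual stays of size $\mathcal O(\Delta t)$: from the defining identity $\Phi_{v^0}(v^{(j-1)})+D\Phi_{v^0}(v^{(j-1)})(v^{(j)}-v^{(j-1)})=0$, a Taylor remainder estimate together with \eqref{eqn:inv_lip} and $\|v^{(j)}-v^{(j-1)}\|_\infty\le 2\|\Phi_{v^0}(v^{(j-1)})\|_\infty$ yields the quadratic contraction $\|\Phi_{v^0}(v^{(j)})\|_\infty\le 2L_{f'}\Delta t\,\|\Phi_{v^0}(v^{(j-1)})\|_\infty^2$; since $\Phi_{v^0}$ vanishes at the exact BE solution $\Phi_{\text{BE}}(v^0)$, the bound \eqref{eqn:grad_bound} gives $\|\Phi_{v^0}(v^{(0)})\|_\infty\le(1+\Delta t L_f)\|v^0-\Phi_{\text{BE}}(v^0)\|_\infty=\mathcal O(\Delta t)$, while Assumption \ref{assump:init_iterate} makes the contraction factor $2L_{f'}\Delta t\|\Phi_{v^0}(v^{(j-1)})\|_\infty$ less than one for $\Delta t$ small, so $\|\Phi_{v^0}(v^{(j)})\|_\infty=\mathcal O(\Delta t)$ for all $j$ with constants depending only on $\Omega_0$, $D$, and $f$. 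Hence the second piece is $\mathcal O(\Delta t^2)\,a_{i-1}$.

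Collecting the two estimates produces the recursion $a_i\le\bigl(1+2(1+\Delta t L_f)+\mathcal O(\Delta t^2)\bigr)a_{i-1}+2\delta=K a_{i-1}+2\delta$ with $K=3+2\Delta t L_f+\mathcal O(\Delta t^2)$. Unrolling from $a_0=\delta$ over $m$ steps gives $a_m\le K^m\delta+2\delta\sum_{j=0}^{m-1}K^j=\bigl(K^m+\tfrac{2(K^m-1)}{K-1}\bigr)\delta$, which is exactly \eqref{eqn:n_lipschitz} with $L_N=K^m+\tfrac{2(K^m-1)}{K-1}$. The main obstacle is the uniform-in-$i$ control of the residual $\|\Phi_{v^0}(v^{(i-1)})\|_\infty$: this is the only place where the Kantorovich hypothesis (Assumption \ref{assump:init_iterate}) and the $\mathcal O(\Delta t)$ size of the starting residual genuinely enter, and closing the quadratic-contraction bookkeeping is what pins down both the form of $K$ and the claim that the $\mathcal O(\Delta t^2)$ constants depend only on $\Omega_0$, $D$, and $f$. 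The remaining steps are routine manipulations with the resolvent bounds of Lemmas \ref{lemma:contract} and \ref{lemma:newton_kant}, and no separate argument is needed to confine the iterates to a ball since those bounds hold for all $w\in C_0(\bar D)$.
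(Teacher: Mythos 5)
Your proposal is correct and follows the same overall architecture as the paper's proof: the same splitting of the Newton update difference via the identity $B^{-1}x-C^{-1}y = B^{-1}(x-y) + (B^{-1}-C^{-1})y$ with $B^{-1}-C^{-1}=B^{-1}(C-B)C^{-1}$, the same invocation of parts (b) and (c) of Lemma \ref{lemma:newton_kant}, the same estimate $2(1+\Delta t L_f)a_{i-1}+2\delta$ for the first piece, and the same one-step recursion $a_i\le Ka_{i-1}+2\delta$ unrolled over $m$ steps. The one genuine difference is how you control the residual $\|\Phi_{v^0}(v^{(i-1)})\|_\infty$ that enters the second piece. The paper Taylor-expands $\Phi_{u^0}$ around its root $u^1$ and then invokes the quadratic convergence of the Newton iterates ($\|u^1 - u^{(m-1)}\| = \mathcal O(\|u^1 - u^0\|^{2^{m-1}})$) to get the iteration-dependent bound $\mathcal O(\Delta t^{2^{m-1}})$, which at the weakest step ($i=1$) still reduces to $\mathcal O(\Delta t)$. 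You instead derive a self-contained residual contraction $\|\Phi_{v^0}(v^{(j)})\|_\infty \le 2L_{f'}\Delta t\,\|\Phi_{v^0}(v^{(j-1)})\|_\infty^2$ directly from the Newton update identity and \eqref{eqn:inv_lip}, seed it with $\|\Phi_{v^0}(v^{(0)})\|_\infty\le(1+\Delta tL_f)\|v^0-\Phi_{\text{BE}}(v^0)\|_\infty=\mathcal O(\Delta t)$, and use Assumption \ref{assump:init_iterate} to guarantee the contraction factor stays below one. Your route avoids appealing to the Kantorovich convergence rate as an external fact and gives a cleaner uniform-in-$j$ bound, at the modest price of a slightly less sharp estimate at the later steps; since the worst-case $K=3+2\Delta tL_f+\mathcal O(\Delta t^2)$ in the final constant $L_N$ is governed by the first step either way, the two arguments deliver the same conclusion.
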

\begin{proof}

Let $u^1 = \Phi_{\text{N}}(u^0)$, and $v^1 = \Phi_{\text{N}}(v^0)$ with $u^{(i)}$ and $v^{(i)},\ i = 1, \ \dots ,\ m$ as their respective intermediate steps. The last step of the iterative process given in \eqref{eqn:newton}, can be bounded above using the triangle inequality as, 
\begin{align}
    \|&\Phi_{\text{N}}(u^0)-\Phi_{\text{N}}(v^0)\|_\infty 
    \leq \|u^{(m-1)} -v^{(m-1)}\|_\infty  \nonumber\\&
    \qquad \hspace{1.3cm} + \|[D\Phi_{u^0}(u^{(m-1)})]^{-1}\Phi_{u^0}(u^{(m-1)}) -[D\Phi_{v^0}(v^{(m-1)})]^{-1}\Phi_{v^0}(v{^{(m-1)}})\|_\infty \nonumber \\
    &\leq \|u^{(m-1)} -v^{(m-1)}\|_\infty  + \underbrace{\|[D\Phi_{u^0}(u^{(m-1)})]^{-1}\|_\infty  \cdot \|\Phi_{u^0}(u^{(m-1)}) -\Phi_{v^0}(v^{(m-1)}) \|_\infty}_{\rm I}
    \nonumber \\
    & + \underbrace{\| \Phi_{v^0}(v^{(m-1)})\|_\infty \cdot \|[D\Phi_{u^0}(u^{(m-1)})]^{-1} - [D\Phi_{v^0}(v^{(m-1)})]^{-1}\|_\infty}_{\rm II}
    \label{eqn:lip_step}
\end{align}
We bound each term separately. 
Applying \eqref{eqn:inv_grad_bd} along with the triangle inequality, 
\begin{align}
    {\rm I} &\leq   2\|\Phi_{u^0}(u^{(m-1)}) -\Phi_{u^0}(v^{(m-1)}) \|_\infty + 2\|\Phi_{u^0}(v^{(m-1)}) -\Phi_{v^0}(v^{(m-1)}) \|_\infty.
    \label{eqn:Isplit}
\end{align}
The first term in \eqref{eqn:Isplit} can be bounded using the mean value inequality for Banach spaces and for the second term, we use the definition of $\Phi_{u^0}$. Consequently,
\begin{align}
   {\rm I}  
   &\leq  2\max_{w \in C_0(\bar{D})}\|D \Phi_{u^0}(w)\|_\infty \cdot  \|u^{(m-1)}-v^{(m-1)}\|_\infty + 2\|[1-\Delta t \Delta]^{-1}[u^0-v^0]\|_\infty, \nonumber \\
   &\leq 2(1+ \Delta t L_f) \|u^{(m-1)}-v^{(m-1)}\|_\infty + 2\|u^0-v^0\|_\infty. \label{eqn:term1}
\end{align}
To obtain the last inequality, we use  \eqref{eqn:grad_bound} along with Lemma \ref{lemma:contract}.  Next, we need to bound $\rm II$ in \eqref{eqn:lip_step}. Expanding $\Phi_{u^0}$ around its root $u^1$ using a Taylor expansion:
\begin{align}
    \|\Phi_{u^0}(u^{(m-1)})\|_\infty &\leq \|\Phi_{u^0}(u^1)\|+ \|D\Phi_{u^0}(u^1)\|\cdot \|u^1-u^{(m-1)}\| + \mathcal{O}(\|u^1-u^{(m-1)}\|^2) \nonumber \\
    &= 0 + \mathcal{O}\left(\|u^1-u^0\|^{2^{m-1}}\right)  = \mathcal{O}\left(\Delta t^{2^{m-1}}\right). \label{eqn:phiu_estimate}
\end{align}
Here we have used the quadratic convergence of Newtons method along with the fact that $\|u^1-u^0\| = \mathcal{O}(\Delta t)$ (from \eqref{eqn:implicit}), where the hidden constants depend on $u^0,\ f,$ and $D$.  Next, observe that:
\begin{align}
    &\|[D\Phi_{u^0}(u^{(m-1)})]^{-1} - [D\Phi_{v^0}(v^{(m-1)})]^{-1}\|_\infty \nonumber \\
    \leq& \|[D\Phi_{u^0}(u^{(m-1)})]^{-1}\|_\infty \cdot 
    \|D \Phi_{u^0}(u^{(m-1)}) - D\Phi_{v^0}(v^{(m-1)})\|_\infty \cdot
    \|[D\Phi_{v^0}(v^{(m-1)})]^{-1}\|_\infty \nonumber \\
    \leq & 4 \|D \Phi_{u^0}(u^{(m-1)}) - D\Phi_{v^0}(v^{(m-1)})\|_\infty \quad \text{(from \eqref{eqn:inv_grad_bd})}.
\end{align}
Putting this together with \eqref{eqn:inv_lip} and  \eqref{eqn:phiu_estimate}, an upper bound on $\rm II$ may be obtained:
\begin{align}
  {\rm II}& \leq \mathcal{O}(\Delta t^{2^{m-1}})\|D \Phi_{u^0}(u^{(m-1)}) - D\Phi_{v^0}(v^{(m-1)})\|_\infty \nonumber\\
  &\leq \mathcal{O}(\Delta t^{2^{m-1}+1})\|u^{(m-1)} -v^{(m-1)}\|_\infty \label{eqn:term2}
\end{align}

The last inequality is from \eqref{eqn:inv_lip} as $D \Phi_{u^0}$ is independent of $u^0$. Combining \eqref{eqn:term1} and \eqref{eqn:term2} with equation \eqref{eqn:lip_step}, we get,
\[
     \|\Phi_{\text{N}}(u^0)-\Phi_{\text{N}}(v^0)\|_\infty \leq [3+ 2\Delta t L_f + \mathcal{O}(\Delta t ^{2^{m-1}+1})]\|u^{(m-1)}-v^{(m-1)}\|_\infty + 2\|u^0-v^0\|_\infty. 
\]
Proceeding recursively gives the required result:
\begin{align*}
    \|\Phi_{\text{N}}(u^0)-\Phi_{\text{N}}(v^0)\|&\leq L_N\|u^0-v^0\|.
\end{align*}
where $L_N = K^m + \frac{2(K^m-1)}{K-1}$ with $K = 3 + 2\Delta t L_f + \mathcal{O}(\Delta t^{2})$. 
\end{proof}
\begin{remark}
    Assumption \ref{assump:init_iterate} is satisfied when $\Delta t L_f (R_\Xcal + L_NR_\Xcal) \leq \sqrt{2}-1 $.
\end{remark}

\begin{remark}
    Modified Newton's method is often used instead of the full method to reduce the computational cost of the LU-decomposition at each iteration. Proposition \ref{lemma:newt_lipschitz} can be easily extended for the modified method, with the proof following the same structure but with slight modifications to the analysis of term II in \eqref{eqn:lip_step}.
\end{remark}

Next, we demonstrate the low complexity structure of this algorithm. Newton's method requires solving a linear system and we first show that this step of the solver has low complexity.

\begin{lemma}\label{lemma:LU_lc}
Consider the solution operator $\Psi: \mathbb{R}^{d_\Xcal \times d_\Xcal} \times \mathbb{R}^{d_\Xcal}\ni (A,b) \mapsto x \in \mathbb{R}^{d_\Xcal}$ for the linear system $Ax=b$, where $A \in \mathbb{R}^{d_\Xcal \times d_\Xcal}$ is invertible, and $x \in \mathbb{R}^{d_\Xcal}$ is computed using the LU decomposition algorithm. This operator has the low complexity structure in Definition \ref{defn:low_comp}, with $k = 6d_\Xcal,\ d_{\text{max}} = 2$ and $\ell_{\text{max}} = \mathcal{O}(d_\Xcal^2)$.
\end{lemma}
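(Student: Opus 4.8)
The plan is to unroll the LU-based linear solver into elementary stages, each of which is either an affine map or a coordinatewise application of a binary (hence at most two-dimensional) nonlinearity, and then to match this decomposition with the form required in Definition~\ref{defn:low_comp}, here instantiated with trivial (identity) encoder and decoder since $\Psi$ already acts between finite-dimensional spaces. The state vector carried between layers will store, at any point of the computation, (i) the currently active entries of the augmented matrix $[A\mid b]$, (ii) the multipliers and the entries of $L$ and $U$ already produced, and (iii) the auxiliary products needed for the next update; all of these together number $\mathcal{O}(d_\Xcal^2)$, which is what forces $\ell_{\max}=\mathcal{O}(d_\Xcal^2)$.

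I would describe the forward phase as Gaussian elimination applied to the augmented matrix $[A\mid b]$, which simultaneously triangularizes $A$ into $U$ and transforms $b$ into $\tilde b$, thereby folding the $Ly=b$ forward substitution into the factorization. Elimination step $k$ (for $k=1,\dots,d_\Xcal-1$) is realized by at most three layers: (1) a layer whose nonlinearities are the binary quotient $g(x,y)=x/y$, producing the multipliers $m_{ik}=A^{(k)}_{ik}/A^{(k)}_{kk}$ for $i>k$, with every other coordinate passed through by the identity $g(v)=v$; (2) a layer whose nonlinearities are the binary product $g(x,y)=xy$, producing all elimination products $m_{ik}A^{(k)}_{kj}$; (3) a purely affine layer performing $A^{(k+1)}_{ij}=A^{(k)}_{ij}-m_{ik}A^{(k)}_{kj}$ and discarding the eliminated column. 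Each such layer is literally of the form $G^i(a)=[g^i_1((V^i_1)^\top a),\dots,g^i_{\ell_i}((V^i_{\ell_i})^\top a)]$ with $d_i\le 2$. The back-substitution phase $x_i=(\tilde b_i-\sum_{j>i}U_{ij}x_j)/U_{ii}$ for $i=d_\Xcal,\dots,1$ is handled identically: index $i$ costs at most three layers (binary products $U_{ij}x_j$; affine accumulation of the numerator; a final binary quotient by $U_{ii}$). Counting, the forward phase uses at most $3(d_\Xcal-1)$ layers and the backward phase at most $3d_\Xcal$, so $k\le 6d_\Xcal$, while the maximal nonlinearity arity is $d_{\max}=2$ and the maximal intermediate width is $\mathcal{O}(d_\Xcal^2)$.

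The main obstacle is the division step: the binary quotient $g(x,y)=x/y$ is a genuine (in particular locally Lipschitz) nonlinearity only where the denominator stays away from zero, so one must ensure that every pivot $A^{(k)}_{kk}$ and every diagonal $U_{ii}$ is nonzero — indeed uniformly bounded below — over the inputs of interest. I would dispatch this either by restricting to matrices whose leading principal minors do not vanish (a mild strengthening of invertibility under which the unpivoted LU factorization exists, the pivots depend continuously on $A$, and hence are bounded below on a compact input set), or, more generally, by noting that partial pivoting only prepends a permutation matrix — a linear map — at each elimination step, which changes neither the layer count, nor the arity bound $d_{\max}=2$, nor the width bound $\mathcal{O}(d_\Xcal^2)$. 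The remaining work is routine bookkeeping: checking that at every layer the coordinates not being updated are propagated by identity nonlinearities, and that the index ranges indeed yield $k=6d_\Xcal$, $d_{\max}=2$, and $\ell_{\max}=\mathcal{O}(d_\Xcal^2)$.
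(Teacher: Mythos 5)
Your proposal is correct and reaches the same conclusion, but it takes a genuinely different decomposition from the paper. The paper unrolls three separate phases: Doolittle-style LU factorization of $A$ alone (two layers per pivot index: one for the products $l_{ki}u_{ik}$, one that simultaneously does the linear combinations and the quotients $x/y$), then forward substitution $Ly=b$, then backward substitution $Ux=y$, totaling $2d_\Xcal + 2d_\Xcal + 2d_\Xcal = 6d_\Xcal$ layers. You instead run Gaussian elimination on the augmented matrix $[A\mid b]$, which folds forward substitution into the factorization and leaves only two phases; your per-step budget is three layers (quotient, product, affine update) rather than two, and the totals again land at $\le 6d_\Xcal$. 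Both decompositions give $d_{\max}=2$ and $\ell_{\max}=\mathcal{O}(d_\Xcal^2)$, so neither is strictly better for the lemma's conclusion; yours is arguably cleaner because it eliminates one phase, and you add something the paper omits: an explicit discussion of why the quotient nonlinearity is well-defined, via the requirement that pivots stay bounded away from zero.

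One caveat on that last point: your first remedy (restrict to $A$ whose leading principal minors are uniformly nonvanishing on the compact input set) is correct and is in fact the one that matters for the downstream application, where $A = I - L M_{\Delta t f'(\mathrm{u}^0)}$ is a small perturbation of the identity. Your second remedy — ``partial pivoting only prepends a permutation matrix, a linear map'' — does not actually fit the framework. The permutation at step $k$ is data-dependent (it is an argmax over $|A^{(k)}_{ik}|$, $i\ge k$), so it is not a \emph{fixed} matrix $V^i_j$ as Definition~\ref{defn:low_comp} requires; implementing the row selection would need comparison/argmax nonlinearities whose arity grows with $d_\Xcal$, breaking both the arity bound $d_{\max}=2$ and, if done by a pairwise tournament, the $\mathcal{O}(d_\Xcal)$ depth. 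Since your first remedy suffices, I would simply drop the partial-pivoting alternative or flag that it requires a different argument.
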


\begin{proof}
The proof follows the three stages of the algorithm: LU decomposition of $A$, forward substitution to solve $Ly=b$, and backward substitution to solve $Ux=y$. Each stage is unrolled into the low-complexity structure. The initial input is $z_0 = [\text{vec}(A); b] \in \mathbb{R}^{d_\Xcal^2 + d_\Xcal}$, which is appended with the outputs of each successive layer.

The LU decomposition stage can be unrolled into $d_\Xcal$ steps, where each step $j = 1, \cdots, d_\Xcal$ corresponds to two layers. Let $[L]_{j-1},[U]_{j-1}$ denote the elements of $L$ and $U$ computed up to the $j-1$th stage. The first layer $G^{2j-1}$ takes as input $[z_0;[L]_{j-1},[U]_{j-1}] \in \mathbb{R}^{d_j}$. For $i<j$ and $k \geq j$, $G^{2j-1}$ computes all products of the form $l_{ki}u_{ik}$. The corresponding projection matrix $V^{2j-1}_k \in \mathbb{R}^{d_{j} \times 2}$ selects the two appropriate components for the product from the input. The product terms are then computed using the non-linearity $g(x, y) = xy$.  The number of such products is approximately $2(d_\Xcal-j)(j-1)$, leading to a maximal layer width $\ell_{\text{max}} = \mathcal{O}(d_\Xcal^2)$. The output of this layer is thus $[z_0;\ [L]_{j-1};\ [U]_{j-1};\  \{l_{ji}u_{ik}\}_{i=1, k=j}^{j-1,d_{\Xcal}}]$.

Second, $G^{2j}$ computes the $j$-th row and column of $U$ and $L$, respectively. Linear transformations compute $u_{jk}$ and the terms required for computing $l_{kj}$ for $k\geq j$. The linear operation to compute $u_{jk}$ is $u_{jk} = a_{jk} - \sum_{i=1}^{j-1} l_{ji}u_{ik}$. For each $u_{jk}$, the projection matrix $V^{2j}_k$ is a row vector representing the required linear combination of inputs from $A$ and the outputs of layer $G^{2j-1}$, followed by the function $g(x)=x$. Additionally, in the same layer, linear transformations obtain the vector $[n_{kj}=a_{kj} - \sum_{i=1}^{j-1} l_{ki}u_{ij};\ u_{jj}=a_{jj}-\sum l_{ji}u_{ij}]$. The non-linearity $g(x,y) = x/y$ computes elements of $L$, with $l_{kj} = n_{kj}/u_{jj}$.  Proceeding iteratively gives the LU decomposition of $A$.

The next stage solves the system $Ly=b$. At step $i=1, \dots, d_\Xcal$, we have input $[z_0;\ vec(L);\ vec(U);\ \{y_j\}_{j=1}^{i-1}] $ to compute $y_i$. The layer $G^{2d_{\Xcal}+2i-1}$ computes the products $\{l_{ij}y_j\}_{j=1}^{i-1}$. To obtain each term, the projection matrix selects the components $l_{ij}$ and $y_j$ from the input, and the product non-linearity $g(x,y)=xy$ is applied. In the second layer $G^{2d_{\Xcal}+2i}$, linear maps transform the input to $ [b_i - \sum_{j=1}^{i-1} (l_{ij}y_j); l_{ii}]$. A subsequent non-linearity $g(x,y) = x/y$ gives $y_i=(b_i - \sum_{j=1}^{i-1} l_{ij}y_j)/l_{ii} $. This procedure iteratively computes $y \in \mathbb{R}^{d_\Xcal}$. 

Proceeding similarly with backward substitution for the system $Ux=y$, gives the solution $x$.
The complete solver is a composition of these layers. The total depth is hence $6d_\Xcal$. All non-linear operations are two-dimensional, so $d_{\text{max}}=2$. The maximal width is dominated by the LU decomposition stage, giving $\ell_{\text{max}} = \mathcal{O}(d_\Xcal^2)$. 
\end{proof}
\begin{proposition}
     \label{lemma:newt_lc} Suppose Assumption \ref{assump:pic_encoder} holds.
Then, $\Phi_{\text{N}}: C_0(\bar{D}) \to C_0(\bar{D})$, as defined in \eqref{eqn:newton}, exhibits the low complexity structure in Definition \ref{defn:low_comp}, with parameters $k= m(6d_\Xcal+3), d_{\max} = 2$, and $\ell_{\max}= \mathcal{O}(d_{\Xcal}^2)$.
\end{proposition}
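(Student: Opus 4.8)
The plan is to unroll one step of Newton's method into a fixed block of layers in the low-complexity format, then compose $m$ such blocks. Recall from \eqref{eqn:newton} that one Newton step sends $u^{(i-1)} \mapsto u^{(i)} = u^{(i-1)} - [D\Phi_{u^0}(u^{(i-1)})]^{-1}\Phi_{u^0}(u^{(i-1)})$. After applying the discretization encoder of Assumption~\ref{assump:pic_encoder}, everything lives in $\mathbb{R}^{d_\Xcal}$: the operator $[1-\Delta t\Delta]^{-1}$ becomes a fixed matrix $L$, the map $w \mapsto \Phi_{u^0}(w)$ becomes $w - L\mathrm{u}^0 - \Delta t\, L f(w)$ where $f$ is applied entrywise, and $D\Phi_{u^0}(w)$ becomes the matrix $I - \Delta t\, L\,\diag(f'(w))$.

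First I would describe the three sub-blocks of a single Newton iteration. (i) A constant-depth block (two or three layers) that, from the current state $[\mathrm{u}^0;\,\mathrm{w}]$ (with $\mathrm{w}=\mathrm{u}^{(i-1)}$), computes the residual vector $\mathrm{b} = \Phi_{u^0}(\mathrm{w}) = \mathrm{w} - L\mathrm{u}^0 - \Delta t\, L f(\mathrm{w})$ and the Jacobian matrix $A = I - \Delta t\, L\,\diag(f'(\mathrm{w}))$: the only nonlinearities are the one-dimensional maps $s\mapsto f(s)$ and $s\mapsto f'(s)$ applied componentwise (so $d_i=1$ here), while assembling $L f(\mathrm w)$, $\mathrm b$, and the entries of $A$ are all linear in the vector of computed values, hence absorbed into the $V^i_j$ matrices, with $\ell_i = \mathcal{O}(d_\Xcal^2)$ to hold all entries of $A$. (ii) The LU-based linear-solve block of Lemma~\ref{lemma:LU_lc} applied to $(A,\mathrm b)$, contributing $6d_\Xcal$ layers, $d_{\max}=2$, $\ell_{\max}=\mathcal{O}(d_\Xcal^2)$, and producing $\Delta := A^{-1}\mathrm b$; the state $[\mathrm u^0;\mathrm w]$ is carried along these layers by identity components. (iii) One linear layer computing $\mathrm u^{(i)} = \mathrm w - \Delta$ and retaining $\mathrm u^0$, so the output state is again of the form $[\mathrm u^0;\mathrm u^{(i)}]$, ready for the next iteration. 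Counting gives roughly $6d_\Xcal + 3$ layers per Newton step — absorbing the $\mathcal{O}(1)$ layers of (i) and (iii) into the stated constant — hence $k = m(6d_\Xcal+3)$ total layers; $d_{\max}=2$ throughout; and $\ell_{\max}=\mathcal{O}(d_\Xcal^2)$, dominated by the LU stage and the Jacobian-assembly stage.

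Then I would conclude by composing: the full map $\mathrm u^0 \mapsto \mathrm u^{(m)}$ is a composition of $m$ copies of the above block, and prepending $E_\Xcal$ and appending $D_\Ycal$ (with $d_\Ycal=d_\Xcal$ since $\Ycal=\Xcal_1$) gives exactly $\Pi_{\Ycal,d_\Ycal}\circ\Phi_{\text N} = D_\Ycal\circ G^k\circ\cdots\circ G^1\circ E_\Xcal$ in the sense of Definition~\ref{defn:low_comp}, with all $\ell_i \lesssim \max\{d_\Xcal,d_\Ycal\}^2$ — and one should note Definition~\ref{defn:low_comp} permits $\ell_i \lesssim \max\{d_\Xcal,d_\Ycal\}$ only, so strictly speaking the quadratic width means this is a mild abuse; I would state it with $\ell_{\max}=\mathcal{O}(d_\Xcal^2)$ as in Lemma~\ref{lemma:LU_lc} and Proposition~\ref{lemma:newt_lc}'s own statement, consistent with how the subsequent generalization theorem will plug in $\ell_{\max}$.

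The main obstacle is bookkeeping rather than any deep idea: one must verify that \emph{every} operation other than the scalar maps $f$, $f'$, and the $x\mapsto xy$, $x\mapsto x/y$ maps inside the LU solver is genuinely affine in the running concatenated state, so that it can be folded into the projection matrices $V^i_j$ — in particular that the Jacobian entries $A_{kl} = \delta_{kl} - \Delta t\, L_{kl} f'(\mathrm w_l)$ are linear in the vector $(f'(\mathrm w_1),\dots,f'(\mathrm w_{d_\Xcal}))$ once $L$ is a fixed matrix, and that the "carry-forward" of $\mathrm u^0$ and intermediate quantities through the LU block does not inflate $\ell_{\max}$ beyond $\mathcal{O}(d_\Xcal^2)$. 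A secondary point worth a sentence is that Lemma~\ref{lemma:LU_lc} requires $A$ invertible, which is guaranteed here at every iterate by Lemma~\ref{lemma:newton_kant}(b); and that while Lemma~\ref{lemma:LU_lc} as stated does not carry extra inputs through its layers, the trivial modification of appending identity components for $[\mathrm u^0;\mathrm w]$ is harmless and does not change its $k$, $d_{\max}$, or $\ell_{\max}$ up to constants.
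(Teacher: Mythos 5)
Your proposal is correct and takes essentially the same route as the paper's proof: unroll each Newton iteration into a short assembly block for the residual $b$ and Jacobian $A$ (with only the one-dimensional scalar nonlinearities $f$ and $f'$), then the LU block of Lemma~\ref{lemma:LU_lc}, then a final affine update, arriving at the identical counts $k=m(6d_\Xcal+3)$, $d_{\max}=2$, $\ell_{\max}=\mathcal{O}(d_\Xcal^2)$. Your observations about carrying $[\mathrm u^0;\mathrm w]$ through the LU block, the invertibility supplied by Lemma~\ref{lemma:newton_kant}(b), and the tension between $\ell_{\max}=\mathcal{O}(d_\Xcal^2)$ and the $\ell_i\lesssim\max\{d_\Xcal,d_\Ycal\}$ constraint in Definition~\ref{defn:low_comp} are all points the paper glosses over, and you are right that none of them affects the stated parameters.
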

\begin{proof}

We show that each iteration of Newton's method, exhibits the defined low-complexity structure. The full $m$-iteration solver is a composition of $m$ such structures. Denote $\mathrm{u}^0 = E_\Xcal(u^0)$ for $u^0 \in C_0(\bar{D})$.  Since, we perform the analysis on the encoded space, we write the discretized version of a single Newton iterate as:
\[
\mathrm{u}^{(1)} =  \mathrm{u}^0 - [I-LM_{\Delta tf'( \mathrm{u}^0)}]^{-1} b, \text{ where } b = \mathrm{u}^0- L (\mathrm{u}^0+ \Delta t f(\mathrm{u}^0))\,,
\]
where $f$ is applied pointwise on $\mathrm{u^0}$. The matrix $L$ corresponds to the numerical integration weights of the linear operator $[1-\Delta t \Delta]^{-1} $, and $M_{\Delta t f'(\mathrm{u}^0)}= \text{diag}\left(\Delta t f'(\mathrm{u}^0)\right)$.

The first layer, $G^1$, takes the input vector $\mathrm{u}^0 \in \mathbb{R}^{d_\Xcal}$. It constructs a vector $z_1 = [\mathrm{u}^0;\ f(\mathrm{u}^0);\ \Delta t f'(\mathrm{u}^0)] \in \mathbb{R}^{3d_\Xcal}$ where $f'$ is applied pointwise. To do this, for $j \in \{1, \dots, d_\Xcal\}$, set $g_j^1(x)=x$ and the projection matrix is taken as the standard basis vector $V_j^1 = e_j \in \mathbb{R}^{d_\Xcal}$. For $j \in \{d_\Xcal+1, \dots, 2d_\Xcal\}$, we set $g_j^1(x)=f(x)$ and $V_j^1 = e_{j-d_\Xcal} \in \mathbb{R}^{d_\Xcal}$. To obtain the last term, for $j \in \{2d_\Xcal+1, \dots, 3d_\Xcal\}$, we take $g_j^1(x)=\Delta tf'(x)$ and $V_j^1 = e_{j-2d_\Xcal} \in \mathbb{R}^{d_\Xcal}$.
 
The second layer, $G^2$, performs linear operations to form the components of the linear system to be solved. It outputs a vector $z_2 = [\mathrm{u}^0;\ b;\ vec(I-L M_{\Delta tf'(\mathrm{u}^0)})] \in \mathbb{R}^{2d_\Xcal +d_\Xcal^2}$. For $j \in \{1, \dots, d_\Xcal\}$, the projection matrices are taken as the standard basis vector, i.e., $V_j^2 = e_j \in \mathbb{R}^{1\times d_\Xcal}$. The vector $b$ can be expressed as a linear map acting on $z_1$: $b=[I-L;\ -\Delta t L;\ \mathbf{0}]z_1$.
Hence, for each $j$ in $\{d_\Xcal+1,\  \dots,\  2d_\Xcal\}$, $V_j^2$ is the $(j-d_\Xcal)$-th row of this block matrix. The associated nonlinearities are simply the function $g_j^2(x)=x$. For $j \in \{ 2d_\Xcal+1, \dots, 3d_\Xcal\}$, projection matrices corresponding to the numerical integration weights of $L$, return $vec(L M_{\Delta tf'(\mathrm{u}^0)})$. A one-dimensional affine non-linearity $g^2_j= \delta_j-x,\ \delta_j \in \{0,1\}$, gives $vec(I-L M_{\Delta tf'(\mathrm{u}^0)})$. This assembles all components of a linear system to be solved. From Lemma \ref{lemma:LU_lc}, we know that solving the linear system can be written in the low complexity structure with maximal width $\mathcal{O}(d_\Xcal^2)$. Concatenating these layers returns the solution $\tilde{\mathrm{u}}$ to the linear system.

 A final linear layer performs the first Newton update $\mathrm{u^{(1)}} \approx \mathrm{u}^0 - \tilde{\mathrm{u}}$. This establishes the low complexity structure for the full $m$-step method, with depth $k= m(6d_\Xcal+3)$, maximal complexity ${d_{\max} = 2}$, and maximal dimension $\ell_{\max}=\mathcal{O}(d_\Xcal^2)$.

\end{proof}

\begin{figure}
    \centering
    \includegraphics[width=1\linewidth]{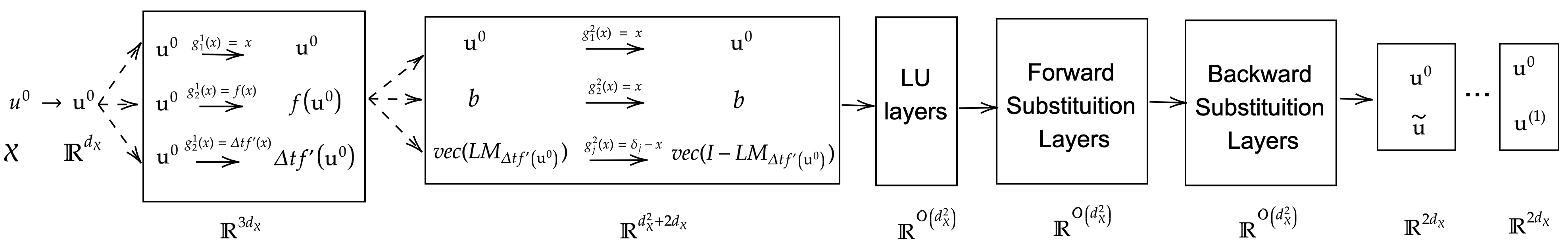}
 \vspace{-6mm}   \caption{Low Complexity Structure for Newtons method. Dashed lines indicate linear transformations, solid lines indicate non-linearities. All transformations are applied pointwise. Note $g^2_j(x):=\delta_j-x$ in the second layer where $\delta_j \in \{0,1\}$ depends on the index. } \vspace{-4mm}
    \label{fig:newt_lc}
\end{figure}

By construction, the output encoder dimension is the same as that of the input, i.e., $d_\Ycal = d_\Xcal$. Our results lead to the following bound on the generalization error.

\begin{theorem}\label{thm:newt_gen}
Suppose Assumptions \ref{assump:compact_supp}, \ref{assump:pic_encoder}, \ref{assump:lipchitz_reaction}-\ref{assump:init_iterate} hold. Let $\GNN$ be the minimizer of the optimization (\ref{eqn:optimization}) where the target operator is $\Phi_{\text{N}}:C_0(\bar{D}) \to C_0(\bar{D})$ defined in \eqref{eqn:newton} for $m$ iterations of Newton's method. Here, the network architecture $\mathcal{F}_\mathrm{NN}(d_{\Xcal},m(6d_\Xcal+3)L,p,M)$ is as in (\ref{eqn:FNN}) with parameters,
\begin{equation*}
\begin{aligned}
		Lp =  \Omega \left( d_{\Xcal}^{\frac{1}{4}} n^{\frac{1}{4}} \right)
  \,, M \geq \sqrt{\ell_{\text{max}}} L_{E_{\Xcal}} R_{\Ycal} \,, \ \ \ell_\text{max} =\mathcal{O}(d_{\Xcal}^2),
\end{aligned}
\end{equation*}	
Let
$L_N = K^m + \frac{2(K^m-1)}{K-1}$ with $K = 3 + 2\Delta t L_f + \mathcal{O}(\Delta t^{2})$. Then, we get,
	\begin{equation*}
		\begin{aligned}
\mathcal{E}_{\text{gen}}\lesssim  L_{{N}}^{2} \log(L_{{N}})(\ell_{\text{max}})^{5/2} n^{-\frac{1}{2}} \log n + \mathcal{E}_{\text{proj}} \,,
		\end{aligned}
	\end{equation*}
	where the constants in $\lesssim$ and $\Omega(\cdot)$ solely depend on $p,\ell_\text{max},R_\Ycal,L_{E_{{\Xcal}}},$ $L_{D_{{\Xcal}}}$, and $\mathcal{O}(\Delta t^2)$ terms depending on $\Omega_0, D,$ and $f$. 
\end{theorem}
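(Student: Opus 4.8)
The statement is a corollary of Theorem \ref{thm:low_complexity}: the plan is to check that the structural hypotheses Assumptions \ref{assump:compact_supp}--\ref{assump:low_complexity} hold for $\Phi_{\text{N}}$ equipped with the discretization encoder-decoder of Assumption \ref{assump:pic_encoder}, read off the values of $L_{\mathcal{S}_{\Delta t}}$, $k$, $d_\text{max}$ and $\ell_\text{max}$ supplied by the earlier results, and then specialize the general bound to $d_\text{max}=2$.

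First I would verify the hypotheses. Assumption \ref{assump:compact_supp} is assumed directly. For Assumption \ref{assump:lip_enco}, the discretization encoder on $C_0(\bar D)$ is linear, sends the zero function to the zero vector (and analogously for the decoder), and is Lipschitz with a mesh-dependent constant; since $\Ycal=\Xcal=C_0(\bar D)$ we use this same pair on the input and output sides, so $d_\Ycal=d_\Xcal$ by construction. Assumption \ref{assump:Lipschitz} is exactly Proposition \ref{lemma:newt_lipschitz}: under Assumptions \ref{assump:lipchitz_reaction}--\ref{assump:init_iterate} (all of which are hypotheses of the present theorem) the operator $\Phi_{\text{N}}$ is Lipschitz with constant $L_{\Phi_{\text N}}=L_N=K^m+\tfrac{2(K^m-1)}{K-1}$, $K=3+2\Delta t L_f+\mathcal{O}(\Delta t^2)$. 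Assumption \ref{assump:low_complexity} is Proposition \ref{lemma:newt_lc}, which under Assumption \ref{assump:pic_encoder} produces the decomposition of Definition \ref{defn:low_comp} with $k=m(6d_\Xcal+3)$, $d_\text{max}=2$, and $\ell_\text{max}=\mathcal{O}(d_\Xcal^2)$ (the quadratic width coming from unrolling the LU solve as in Lemma \ref{lemma:LU_lc}).

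Next I would instantiate Theorem \ref{thm:low_complexity} with these parameters. Setting $d_\text{max}=2$ in the width-depth condition gives $Lp=\Omega\big(d_\Xcal^{(4-2)/(4+4)}n^{2/(4+4)}\big)=\Omega(d_\Xcal^{1/4}n^{1/4})$ and leaves $M\ge\sqrt{\ell_\text{max}}\,L_{E_\Xcal}R_\Ycal$ unchanged (using $L_{E_\Ycal}=L_{E_\Xcal}$ since $\Ycal=\Xcal$); the exponents in the error bound become $\tfrac{8+d_\text{max}}{2+d_\text{max}}=\tfrac52$ on $\ell_\text{max}$ and $-\tfrac{2}{2+d_\text{max}}=-\tfrac12$ on $n$, yielding
\[
\mathcal{E}_\text{gen}\lesssim L_N^2\log(L_N)\,\ell_\text{max}^{5/2}\,n^{-1/2}\log n+\mathcal{E}_\text{proj},
\]
with the network drawn from $\FNN(d_\Xcal,m(6d_\Xcal+3)L,p,M)$, which is exactly the claimed architecture. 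The constants inherited from Theorem \ref{thm:low_complexity} depend on the listed quantities, and the only additional dependence enters through the $\mathcal{O}(\Delta t^2)$ contribution to $K$, which by Proposition \ref{lemma:newt_lipschitz} depends only on $\Omega_0$, $D$ and $f$.

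Once Propositions \ref{lemma:newt_lipschitz} and \ref{lemma:newt_lc} are in hand this is essentially bookkeeping, so there is no serious obstacle; the one point I would be careful about is that the depth parameter $k=m(6d_\Xcal+3)$ grows with $d_\Xcal$. Since the constant in $\lesssim$ in Theorem \ref{thm:low_complexity} is allowed to depend on $k$, one should note that with $m$ held fixed this dependence is only polynomial in $d_\Xcal$, and since $\ell_\text{max}=\mathcal{O}(d_\Xcal^2)$ gives $\ell_\text{max}^{5/2}=\mathcal{O}(d_\Xcal^5)$, the resulting rate stays polynomial in the encoding dimension --- which is precisely the assertion that the Newton time-stepping operator is learnable without the curse of dimensionality.
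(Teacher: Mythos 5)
Your proposal is correct and takes the same route as the paper: the paper's own proof is a one-line appeal to Propositions \ref{lemma:newt_lipschitz} and \ref{lemma:newt_lc} together with Theorem \ref{thm:low_complexity}, and you have filled in exactly the verification steps this entails (checking that Assumptions \ref{assump:compact_supp}--\ref{assump:low_complexity} hold for $\Phi_{\text{N}}$ with the discretization encoder, reading off $L_N$, $k=m(6d_\Xcal+3)$, $d_\text{max}=2$, $\ell_\text{max}=\mathcal{O}(d_\Xcal^2)$, and substituting $d_\text{max}=2$ into the exponents). Your arithmetic, including the derivation of the $1/4$, $5/2$, and $-1/2$ exponents, matches the statement.

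One small caution about your final remark: you observe correctly that $k=m(6d_\Xcal+3)$ grows with $d_\Xcal$ and that Theorem \ref{thm:low_complexity} permits the hidden constant to depend on $k$, but you conclude that this dependence is ``only polynomial in $d_\Xcal$.'' That does not follow from the cited statement alone, since a constant $C(k)$ could in principle grow faster than polynomially in $k$; one would need to inspect how $k$ actually enters the constant in Theorem \ref{thm:low_complexity} to make this claim. The paper glosses over this too (indeed, the statement of Theorem \ref{thm:newt_gen} omits $k$ from the list of quantities the constants depend on), so this is an honest observation on your part rather than a defect in your argument, but the ``polynomial in $d_\Xcal$'' assertion should be flagged as requiring further justification rather than stated as immediate.
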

\begin{proof}
Propositions \ref{lemma:newt_lipschitz}-\ref{lemma:newt_lc} prove the result as a consequence of Theorem \ref{thm:low_complexity}.
\end{proof}

Theorem \ref{thm:newt_gen} shows that there is no CoD when learning the implicit time-stepping method with a Newton solver. The generalization error depends polynomially on $d_\Xcal$. Unsurprisingly, the error decreases with the step-size much like in classical methods.

\begin{remark}
The time-stepping operator corresponding to the explicit Euler method does not possess Lipschitz continuity, unlike the implicit method. Consider the heat equation, where the explicit update step is:
\(
    u^{1} = [1+ \Delta t \Delta] u^0.
\)
With initial conditions $u^0(x) = \sin(jx),\ j \in \mathbb{N}$, and $v^0(x) = 0$, we find:
\begin{align*}
    \|u^1(x) -v^1(x)\| \leq j^2 \|\sin(jx)\|_\infty = j^2 \|u^0 - v^0\|_\infty.
\end{align*}
Since $j$ can be arbitrarily large, this operator is not Lipschitz continuous, and hence outside the scope of our study.
\end{remark}
Next, we aim to obtain similar results for a parabolic system with forcing terms.

\section{Parabolic Equations with Forcing Terms}
\label{sec:para_gen}

 Consider a general parabolic system with a forcing function of the form:
\begin{align}
\begin{cases}
    &\partial_t u(x,t) = \Delta u(x,t)+ f(x), \quad  x\in D \subseteq \mathbb{R}^d,\ t \in [0,T]  \\
    &u(x,0) = u^0(x), \quad x \in D,
\end{cases}\label{eqn:gen_para}
\end{align}
where $f \in C(\bar{D})$ is the forcing function. For simplicity, we assume homogeneous Dirichlet boundary conditions on $D \subset \mathbb{R}^d$ with a sufficiently smooth boundary $\partial D$. 

\subsection{Implicit Euler Method} The time-stepping operator $\Phi_{\text{B}}:C_0(\bar D) \to C_0(\bar D)$ corresponding to the implicit Euler solver, for fixed forcing $f$, is defined as:
\begin{align}
     u^{1} = [1-\Delta t \Delta ]^{-1}[u^0 + \Delta tf(x)] = \Phi_{\text{B}}(u^0).
     \label{eqn:imp_para}
\end{align}
The corresponding multi-input operator $\Psi_{\text{B}}:C_0(\bar{D})\times C(D)\to C_0(\bar D)$ is defined as:
\begin{equation}\label{eqn:imp_para_multi}
    u^{1} = [1-\Delta t \Delta ]^{-1}[u^0 + \Delta tf(x)] = \Psi_{\text{B}}(u^0,f).
\end{equation}
First, we prove the following result about the single-input operator \eqref{eqn:imp_para}.
\begin{assumption} \label{assump:para_encoder}
    The input $u^0 $ is encoded via discretization encoder to $\mathbb{R}^{d_\Xcal}$.
\end{assumption}

\begin{theorem} \label{thm:be_para}
Suppose Assumptions \ref{assump:compact_supp},\ref{assump:para_encoder} hold. Let $\GNN$ be the minimizer of the optimization (\ref{eqn:optimization}) for target operator $\Phi_{\text{B}}:(C_0(\bar{D}),\|\cdot\|_\infty) \to (C_0(\bar{D}),\|\cdot\|_\infty) $ defined in \eqref{eqn:imp_para}. For network architecture $\mathcal{F}_\mathrm{NN}(d_{\Xcal},L,p,M)$ in (\ref{eqn:FNN}) with parameters:
\begin{equation*}
\begin{aligned}
		Lp =  \Omega \left( d_{\Xcal}^{\frac{1}{2}} n^{\frac{1}{6}} \right)
  \,, M \geq \sqrt{\ell_{d_\Xcal}} L_{E_{\Xcal}} R_{\Ycal} \,,
\end{aligned}
\end{equation*}	
we have the following bound on the generalization error,
	\begin{equation*}
		\begin{aligned}
\mathcal{E}_{\text{gen}}\lesssim  (d_\Xcal)^{3} n^{-\frac{2}{3}} \log n + \mathcal{E}_{\text{proj}} \,.
		\end{aligned}
	\end{equation*}
	The constants in $\lesssim$ and $\Omega(\cdot)$ solely depend on $p,d_\Xcal,R_\Ycal,L_{E_{{\Xcal}}},$ and $L_{D_{{\Xcal}}}$. 
\end{theorem}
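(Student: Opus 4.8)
The plan is to obtain the statement as an immediate corollary of Theorem~\ref{thm:low_complexity}, after verifying that the implicit Euler operator $\Phi_{\text{B}}$ in \eqref{eqn:imp_para} is Lipschitz (Assumption~\ref{assump:Lipschitz}) and possesses the low-complexity structure of Definition~\ref{defn:low_comp} (Assumption~\ref{assump:low_complexity}); both hold easily because $\Phi_{\text{B}}$ is \emph{affine} in $u^0$. For the Lipschitz property, note that for $u^0, v^0 \in \Omega_0$ the forcing contribution cancels in the difference, so $\Phi_{\text{B}}(u^0) - \Phi_{\text{B}}(v^0) = [1-\Delta t\Delta]^{-1}(u^0 - v^0)$, and Lemma~\ref{lemma:contract} gives $\|\Phi_{\text{B}}(u^0) - \Phi_{\text{B}}(v^0)\|_\infty \le \|u^0-v^0\|_\infty$. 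Hence $\mathcal{S}_{\Delta t} = \Phi_{\text{B}}$ is $1$-Lipschitz, and one may take $L_{\mathcal{S}_{\Delta t}}$ to be an absolute constant; this is why the prefactor $L_{\mathcal{S}_{\Delta t}}^2\log L_{\mathcal{S}_{\Delta t}}$ of Theorem~\ref{thm:low_complexity} is absorbed into the implicit constant and does not appear in the final bound.

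Next I would exhibit the low-complexity structure. Since $\Ycal = C_0(\bar D) = \Xcal$, the discretization encoder/decoder of Assumption~\ref{assump:para_encoder} is used for both spaces, so $d_\Ycal = d_\Xcal$. Writing $\mathrm{u}^0 = E_\Xcal(u^0)$, letting $L \in \mathbb{R}^{d_\Xcal\times d_\Xcal}$ be the matrix representing $[1-\Delta t\Delta]^{-1}$ in the encoded coordinates and $\mathrm{f}$ the encoded (fixed) forcing, one has $\Pi_{\Ycal,d_\Ycal}\circ\Phi_{\text{B}}(u^0) = D_\Ycal\bigl(L\mathrm{u}^0 + \Delta t\, L\mathrm{f}\bigr)$. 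This matches Definition~\ref{defn:low_comp} with a single layer $k=1$: the $j$-th output coordinate is $g_j^1\bigl((V_j^1)^\top\mathrm{u}^0\bigr)$ where $V_j^1$ is the $j$-th row of $L$ (so $d_1 = 1$) and $g_j^1(x) = x + \Delta t(L\mathrm{f})_j$ is a one-dimensional affine ``non-linearity'' (the fixed forcing enters only as a constant bias, exactly as the identity/affine maps used in the proof of Proposition~\ref{lem:pic_lc}). Consequently $d_{\max} = 1$ and $\ell_{\max} = d_\Ycal = d_\Xcal$.

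With these values the theorem follows by substitution into Theorem~\ref{thm:low_complexity}: the width/depth condition $Lp = \Omega\!\left(d_\Xcal^{(4-d_{\max})/(4+2d_{\max})} n^{d_{\max}/(4+2d_{\max})}\right)$ reduces to $Lp = \Omega(d_\Xcal^{1/2}n^{1/6})$, the bound $M \ge \sqrt{\ell_{\max}}\,L_{E_\Ycal} R_\Ycal$ becomes $M \ge \sqrt{d_\Xcal}\,L_{E_\Xcal} R_\Ycal$, the network depth $kL$ becomes $L$, and the error estimate $\mathcal{E}_{\text{gen}} \lesssim L_{\mathcal{S}_{\Delta t}}^2\log(L_{\mathcal{S}_{\Delta t}})\,\ell_{\max}^{(8+d_{\max})/(2+d_{\max})} n^{-2/(2+d_{\max})}\log n + \mathcal{E}_{\text{proj}}$ becomes $\mathcal{E}_{\text{gen}} \lesssim d_\Xcal^3 n^{-2/3}\log n + \mathcal{E}_{\text{proj}}$, as claimed.

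The only delicate point — the ``hard part'' — is the identification in the second paragraph of the finite-dimensional representation of $\Phi_{\text{B}}$ with a single affine map in the encoded coordinates: this presumes the discretization encoder is compatible with the linear solve $[1-\Delta t\Delta]^{-1}$, i.e.\ that $E_\Ycal\circ[1-\Delta t\Delta]^{-1} = L\circ E_\Xcal$ on $\Omega_0$ for a fixed matrix $L$ (exact for spectral encoders, and the standard convention for discretization encoders adopted throughout Section~\ref{sec:rdeq}). Everything else is routine bookkeeping.
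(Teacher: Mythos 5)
Your proposal is correct and follows essentially the same route as the paper: verify $1$-Lipschitz continuity and the low-complexity structure with $d_{\max}=1$, $\ell_{\max}=d_\Xcal$, then substitute into Theorem~\ref{thm:low_complexity}. The only cosmetic difference is in the Lipschitz step: you observe that the forcing cancels in the difference $\Phi_{\text{B}}(u^0)-\Phi_{\text{B}}(v^0)=[1-\Delta t\Delta]^{-1}(u^0-v^0)$ and invoke Lemma~\ref{lemma:contract}, whereas the paper re-derives the bound directly from the maximum principle applied to the implicit Euler equation and then appeals to affinity — same underlying tool, yours is marginally tidier and even corrects a small omission (the missing $\Delta t$ factor in the paper's expression for the affine non-linearity $g_k$).
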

\begin{proof} First, we show that $\Phi_{\text{B}}$ is Lipschitz continuous. Let $x_0 \in D$ be the point at which $u^{1}$ achieves a maximum, i.e., $\Delta u^{1}(x_0) \leq 0$. This implies,
\begin{align*}
    u^1(x_0) - \Delta t \Delta u^1(x_0) = u^0(x_0) + &\Delta tf(x_0) \implies u^1(x_0) \leq u^0(x_0)+ \Delta t f(x_0) ,\\
    \implies \max(u^1) &\leq \|u^0\|_\infty + \Delta t \|f\|_\infty.
\end{align*}
We can similarly derive an estimate for $\min(u^1)$. It follows that,
\begin{equation}
    \|u^1\|_\infty \leq \|u^0\|_\infty + \Delta t \|f\|_\infty,.
    \label{eqn:lip_para_be}
\end{equation} 
Note that $u^{1}$ is obtained through an affine transformation of $u^0$, specifically, by applying a linear integral transform of the $u^0$ with an added forcing function. This, together with equation \eqref{eqn:lip_para_be}, implies that the operator is 1-Lipschitz. Furthermore, it has the low complexity structure that is observed by setting $V^1_k \in \mathbb{R}^{d_\Xcal\times 1}$ to be $k$th row of the numerical integration weight matrix $L$ corresponding to the linear operator $[1-\Delta t \Delta]^{-1}$ for $1 \leq k \leq d_\Xcal$. To this, we apply pointwise the non-linearity $g_k(u) = u+ [1-\Delta t \Delta]^{-1} f$. The maximal non-linearity degree $d_{\max} =1$. By construction, the encoding dimension of the output space is same as that of the input, i.e., $d_\Ycal=d_\Xcal$.
This low complexity structure along with the Lipschitz estimate proves the theorem as a consequence of Theorem \ref{thm:low_complexity}. 
\end{proof}

Using intermediate results from the proof of Theorem \ref{thm:be_para}, we obtain a similar result for the multi-input operator.
\begin{assumption} \label{assump:multi_para_encoder}
   The initial condition and forcing function are encoded via discretization encoders to $\mathbb{R}^{d_{\Xcal_1}}$, and $\mathbb{R}^{d_{\Xcal_2}}$, respectively, with $d_\Xcal:= d_{\Xcal_1} + d_{\Xcal_2}$.
\end{assumption}

\begin{theorem} \label{thm:be_para_multi}
Suppose Assumptions \ref{assump:compact_supp},\ref{assump:multi_para_encoder} hold. Let $\GNN$ be the minimizer of the optimization (\ref{eqn:optimization}) for target operator $\Psi_{\text{B}}:C_0(\bar{D})\times C(\bar{D})\to C_0(\bar D)$ defined in \eqref{eqn:imp_para_multi}. Then, for network architecture $\mathcal{F}_\mathrm{NN}(d_{\Xcal},L,p,M)$ in (\ref{eqn:FNN}) with parameters:
\begin{equation*}
\begin{aligned}
		Lp =  \Omega \left( d_{\Xcal}^{\frac{1}{2}} n^{\frac{1}{6}} \right)
  \,, M \geq \sqrt{\ell_{d_\Xcal}} L_{E_{\Xcal}} R_{\Ycal} \,,
\end{aligned}
\end{equation*}	
and $L_{\text{B}}= \max\{\Delta t, 1\}$, we have,
	\begin{equation*}
		\begin{aligned}
\mathcal{E}_{\text{gen}}\lesssim  L_{\text{B}}(d_\Xcal)^{3} n^{-\frac{2}{3}} \log n + \mathcal{E}_{\text{proj}} \,,
		\end{aligned}
	\end{equation*}
	where the constants in $\lesssim$ and $\Omega(\cdot)$ solely depend on $p,d_\Xcal,R_\Ycal,L_{E_{{\Xcal}}},$ and $L_{D_{{\Xcal}}}$. 
\end{theorem}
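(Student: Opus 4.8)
The plan is to check the two structural hypotheses of Theorem~\ref{thm:low_complexity} for the multi-input operator $\Psi_{\text{B}}$ — Lipschitz continuity on the product space and the low-complexity structure — and then read off the bound, essentially repeating the argument already given for the single-input operator $\Phi_{\text{B}}$ in the proof of Theorem~\ref{thm:be_para}, but now tracking the extra input $f$.

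First I would establish that $\Psi_{\text{B}}$ is Lipschitz with constant $L_{\text{B}}=\max\{1,\Delta t\}$ with respect to the sum norm on $C_0(\bar D)\times C(\bar D)$. Given $(u^0,f),(v^0,g)\in\Omega_0\times\Omega_p$, linearity of the implicit Euler step gives
\[
\Psi_{\text{B}}(u^0,f)-\Psi_{\text{B}}(v^0,g)=[1-\Delta t\Delta]^{-1}\bigl[(u^0-v^0)+\Delta t(f-g)\bigr].
\]
Since $[1-\Delta t\Delta]^{-1}$ is the non-expansive solution operator of Lemma~\ref{lemma:contract} (equivalently, by re-running the maximum-principle estimate leading to \eqref{eqn:lip_para_be} with right-hand side $(u^0-v^0)+\Delta t(f-g)$), I obtain
\[
\|\Psi_{\text{B}}(u^0,f)-\Psi_{\text{B}}(v^0,g)\|_\infty\le\|u^0-v^0\|_\infty+\Delta t\|f-g\|_\infty\le L_{\text{B}}\bigl(\|u^0-v^0\|_\infty+\|f-g\|_\infty\bigr),
\]
which is Assumption~\ref{assump:Lipschitz} with $L_{\mathcal{S}_{\Delta t}}=L_{\text{B}}$.

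Next I would exhibit the low-complexity structure. Under Assumption~\ref{assump:multi_para_encoder} the encoded input is $z=[\mathrm u^0;\mathrm f]\in\mathbb R^{d_\Xcal}$ with $\mathrm u^0\in\mathbb R^{d_{\Xcal_1}}$, $\mathrm f\in\mathbb R^{d_{\Xcal_2}}$, and the discretized step is the single linear map $\mathrm u^{(1)}=L\mathrm u^0+\Delta t\,L\mathrm f=\widetilde L z$, where $L\in\mathbb R^{d_{\Xcal_1}\times d_{\Xcal_1}}$ is the numerical integration weight matrix of $[1-\Delta t\Delta]^{-1}$ and $\widetilde L=[\,L\mid\Delta t L\,]\in\mathbb R^{d_{\Xcal_1}\times d_\Xcal}$. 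Taking $k=1$, $\ell_0=d_\Xcal$, $\ell_1=d_{\Xcal_1}=d_\Ycal$, choosing $V^1_j\in\mathbb R^{d_\Xcal\times1}$ to be the $j$-th row of $\widetilde L$ and $g^1_j(x)=x$ for $1\le j\le d_{\Xcal_1}$ realizes $\Pi_{\Ycal,d_\Ycal}\circ\Psi_{\text{B}}=D_\Ycal\circ G^1\circ E_\Xcal$ in the sense of Definition~\ref{defn:low_comp}, with $d_{\max}=1$ and $\ell_{\max}=d_{\Xcal_1}\le d_\Xcal\lesssim\max\{d_\Xcal,d_\Ycal\}$ (the identification of the discrete matrix $L$ with the encoded action of $[1-\Delta t\Delta]^{-1}$ is inherited from the framework used in Theorem~\ref{thm:be_para}).

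Finally I would invoke Theorem~\ref{thm:low_complexity} with $k=1$, $d_{\max}=1$, $\ell_{\max}=d_{\Xcal_1}\le d_\Xcal$, $L_{\mathcal{S}_{\Delta t}}=L_{\text{B}}$, and $d_\Ycal=d_{\Xcal_1}$: the width/depth requirement specializes to $Lp=\Omega(d_\Xcal^{1/2}n^{1/6})$, the weight bound to $M\ge\sqrt{\ell_{\max}}\,L_{E_\Ycal}R_\Ycal$, and the error estimate to $\lesssim L_{\text{B}}^2\log(L_{\text{B}})\,\ell_{\max}^{3}n^{-2/3}\log n+\mathcal{E}_{\text{proj}}$; bounding $\ell_{\max}\le d_\Xcal$ and absorbing the $\Delta t$-dependence into $L_{\text{B}}$ gives the stated bound. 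I do not expect a substantive obstacle here, since after encoding the operator is \emph{linear}; the only points demanding care are (i) that the product-space sum norm collapses the two one-sided estimates $\|u^0-v^0\|_\infty$ and $\Delta t\|f-g\|_\infty$ into the single constant $L_{\text{B}}=\max\{1,\Delta t\}$, and (ii) that $\ell_{\max}=d_{\Xcal_1}$ rather than $d_\Xcal$, so the polynomial-in-$d_\Xcal$ rate is not degraded.
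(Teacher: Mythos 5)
Your proposal is correct and follows essentially the same route as the paper: you verify Lipschitz continuity via the maximum-principle estimate (equivalently, non-expansiveness of $[1-\Delta t\Delta]^{-1}$ from Lemma~\ref{lemma:contract}), observe that the bilinear map is linear after encoding and hence trivially low-complexity with $d_{\max}=1$ and $\ell_{\max}\le d_\Xcal$, and then invoke Theorem~\ref{thm:low_complexity}. You are merely more explicit than the paper's own one-paragraph proof, which simply points at the bound \eqref{eqn:lip_para_be} from Theorem~\ref{thm:be_para}. Two minor remarks: your derivation of $\ell_{\max}=d_{\Xcal_1}$ is actually slightly sharper than the paper's $\ell_{\max}=d_\Xcal$, though both yield the same $d_\Xcal^3$ rate; and your careful read-off from Theorem~\ref{thm:low_complexity} gives a prefactor $L_{\text{B}}^2\log(L_{\text{B}})$ rather than the stated $L_{\text{B}}$ — that discrepancy is in the theorem statement itself rather than in your argument, so your remark about "absorbing the $\Delta t$-dependence" is the right level of generosity to extend there.
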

\begin{proof}
   The proof of Theorem \ref{thm:be_para} shows that $\|u^1\|_\infty \leq \|u^0\|_{\infty} + \Delta t \|f\|_{C^1}$. This boundedness property of the bilinear target operator implies Lipschitz continuity, and the linearity naturally gives it the low complexity structure, thereby Theorem \ref{thm:low_complexity} holds. The maximal width $\ell_{\max} = d_\Xcal$, and the maximal nonlinearity dimension $d_{\max}=1$, as the solution operator is linear.
\end{proof}

\subsection{Crank-Nicholson Method} Next, consider the Crank-Nicholson method for \eqref{eqn:gen_para}. We define the operator $\Phi_{\text{C}}:C^2_0(\bar D) \to C_0(\bar D)$ as:
\begin{align}
    u^{1} =  \left[1-\frac{\Delta t}2 \Delta \right]^{-1}\left[\left(1+\frac{\Delta t}2 \Delta\right) u^0 + \Delta t f(x)\right] = \Phi_{\text{C}}(u^0).
    \label{eqn:cn_para}
\end{align} The following generalization error bound may be obtained for this method.
\begin{theorem} \label{thm:cn_para}
    Suppose Assumptions \ref{assump:compact_supp}, \ref{assump:para_encoder} hold. Let $\GNN$ be the minimizer of optimization (\ref{eqn:optimization}) for target operator $\Phi_{\text{C}}:C_0^2(D) \to C_0(\bar{D})$ defined in \eqref{eqn:cn_para}. Then, for network architecture $\mathcal{F}_\mathrm{NN}(d_{\Xcal},L,p,M)$ in (\ref{eqn:FNN}) with parameters:
\begin{equation*}
\begin{aligned}
		Lp =  \Omega \left( d_{\Xcal}^{\frac{1}{2}} n^{\frac{1}{6}} \right)
  \,, M \geq \sqrt{\ell_{d_\Xcal}} L_{E_{\Ycal}} R_{\Ycal} \,,
\end{aligned}
\end{equation*}	
we have,
	\begin{equation*}
		\begin{aligned}
\mathcal{E}_{\text{gen}}\lesssim  (\ell_{\text{max}})^{3} n^{-\frac{2}{3}} \log n + \mathcal{E}_{\text{proj}} \,.
		\end{aligned}
	\end{equation*}
	The constants in $\lesssim$ and $\Omega(\cdot)$ only depend on $p,\ell_\text{max},R_\Ycal,L_{E_{{\Xcal}}},L_{E_{{\Ycal}}}$ and $L_{D_{{\Xcal}}}, L_{D_{{\Ycal}}}$. 
\end{theorem}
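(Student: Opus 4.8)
The plan is to mirror the proof of Theorem~\ref{thm:be_para}, since the Crank--Nicolson operator $\Phi_{\text{C}}$ in \eqref{eqn:cn_para} is, like the implicit Euler operator, an affine map of $u^0$ obtained by composing bounded linear operators with an additive forcing term. I would establish two ingredients and then invoke Theorem~\ref{thm:low_complexity}: (i) Lipschitz continuity of $\Phi_{\text{C}}$ with an explicit constant, and (ii) the low-complexity structure of $\Phi_{\text{C}}$ after discretization encoding.

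\textbf{Step 1: Lipschitz estimate.} For the difference $w = u^1 - v^1$ of two outputs with inputs $u^0, v^0$, the forcing term cancels and one has $\left[1 - \tfrac{\Delta t}{2}\Delta\right] w = \left[1 + \tfrac{\Delta t}{2}\Delta\right](u^0 - v^0)$. The cleanest route is to bound the two factors separately in the $\|\cdot\|_\infty$ operator norm. Lemma~\ref{lemma:contract} (applied with time-step $\Delta t/2$ in place of $\Delta t$) gives that $\left[1 - \tfrac{\Delta t}{2}\Delta\right]^{-1}$ is non-expansive on $C_0(\bar D)$. For the forward factor $\left[1 + \tfrac{\Delta t}{2}\Delta\right]$ acting on $C_0^2(\bar D)$, a maximum-principle argument as in Theorem~\ref{thm:be_para} (evaluate at an interior maximum $x_0$, where $\Delta(\cdot)(x_0)\le 0$) shows it does not increase the sup-norm either, so $\Phi_{\text{C}}$ is $1$-Lipschitz; alternatively one can absorb any growth into a constant $L_{\text{C}}=\max\{1,\Delta t\}$ paralleling the multi-input Euler case. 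Either way, $L_{\Phi_{\text{C}}}$ is an absolute constant independent of $d_\Xcal$, which is all that Theorem~\ref{thm:low_complexity} needs.

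\textbf{Step 2: Low-complexity structure.} On the discretization-encoded space $\mathbb{R}^{d_\Xcal}$, write $L_1$ and $L_2$ for the numerical-integration weight matrices of $\left[1 - \tfrac{\Delta t}{2}\Delta\right]^{-1}$ and $\left[1 + \tfrac{\Delta t}{2}\Delta\right]$ respectively, and let $\mathrm{f} = E_\Xcal(f)$ be the fixed encoded forcing. Then $\Pi_{\Ycal,d_\Ycal}\circ\Phi_{\text{C}}(u^0) = D_\Xcal\big(L_1 L_2 \mathrm{u}^0 + \Delta t L_1 \mathrm{f}\big)$, a single affine map. As in Theorem~\ref{thm:be_para}, take one layer $G^1$ with $V^1_k \in \mathbb{R}^{d_\Xcal \times 1}$ the $k$-th row of $L_1 L_2$ and pointwise non-linearity $g_k(u) = u + \big(\Delta t L_1 \mathrm{f}\big)_k$ (an affine, hence admissible, one-dimensional non-linearity); this yields $k=1$, $d_{\max}=1$, $\ell_{\max}=d_\Xcal$, and by construction $d_\Ycal = d_\Xcal$. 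Substituting $d_{\max}=1$, $\ell_{\max}=\mathcal{O}(d_\Xcal)$, and $L_{\Phi_{\text{C}}}=\mathcal{O}(1)$ into the rate of Theorem~\ref{thm:low_complexity} gives $Lp = \Omega(d_\Xcal^{1/2} n^{1/6})$ and $\mathcal{E}_{\text{gen}} \lesssim (\ell_{\max})^3 n^{-2/3}\log n + \mathcal{E}_{\text{proj}}$, matching the statement.

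\textbf{Expected main obstacle.} The only delicate point is controlling the forward operator $\left[1 + \tfrac{\Delta t}{2}\Delta\right]$: unlike $\left[1-\tfrac{\Delta t}{2}\Delta\right]^{-1}$ it is unbounded on all of $C_0(\bar D)$, which is precisely why the domain is taken to be $C_0^2(\bar D)$. I would handle this by the interior-maximum argument on the $C^2$ functions in the image of the measure's support (legitimate since $\Omega_\Xcal$ is compact in $C_0^2$ by Assumption~\ref{assump:compact_supp}), or, if one prefers to avoid the maximum principle for the forward step, by writing $\left[1+\tfrac{\Delta t}{2}\Delta\right]\left[1-\tfrac{\Delta t}{2}\Delta\right]^{-1} = -I + 2\left[1-\tfrac{\Delta t}{2}\Delta\right]^{-1}$, whose $\|\cdot\|_\infty$ operator norm is at most $3$ by Lemma~\ref{lemma:contract}; this gives $L_{\Phi_{\text{C}}} \le 3$, still an absolute constant, and the stated bound follows verbatim.
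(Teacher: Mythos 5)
Your approach to the low-complexity structure and the invocation of Theorem~\ref{thm:low_complexity} coincide with the paper, but your Lipschitz argument takes a genuinely different route. The paper expands $u^0$, $u^1$, and $f$ in the eigenbasis of $-\Delta$ and observes that the Crank--Nicolson gain on mode $j$, namely $(1-\tfrac{\Delta t}{2}\lambda_j)/(1+\tfrac{\Delta t}{2}\lambda_j)$, has modulus at most $1$ because the Dirichlet Laplacian has positive spectrum; this yields a Lipschitz constant of $1$ (in the $C^2 \to C_0$ sense). Your alternative route via the resolvent identity $\left[1+\tfrac{\Delta t}{2}\Delta\right]\left[1-\tfrac{\Delta t}{2}\Delta\right]^{-1} = 2\left[1-\tfrac{\Delta t}{2}\Delta\right]^{-1} - I$ combined with Lemma~\ref{lemma:contract} is correct, elementary, and sidesteps spectral machinery entirely, at the mild cost of a worse constant ($\le 3$ instead of $1$), which is immaterial since the theorem only needs a $d_\Xcal$-independent Lipschitz constant. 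This is a perfectly valid substitute.

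However, your \emph{primary} suggestion --- the interior-maximum argument applied to the forward factor $\left[1+\tfrac{\Delta t}{2}\Delta\right]$ --- does not work and should be dropped. The maximum-principle argument in Theorem~\ref{thm:be_para} succeeds for the resolvent because one evaluates the implicit relation at the maximizer of the \emph{output} $u^1$, where $\Delta u^1 \le 0$. For an explicit forward application, evaluating at the maximizer of the \emph{input} $u^0$ tells you nothing about $\sup_D v$ for $v = u^0 + \tfrac{\Delta t}{2}\Delta u^0$, because the maximum of $v$ need not occur there; and indeed $\left[1+\tfrac{\Delta t}{2}\Delta\right]$ is unbounded on $C_0(\bar D)$ (take $u^0 = \sin(jx)$, where the sup-norm is amplified by $|1-\tfrac{\Delta t}{2}j^2|$). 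Compactness of $\Omega_\Xcal$ in $C_0^2$ does not rescue the argument either, since Lipschitz continuity requires a uniform bound over all pairs in the support. Fortunately you hedged correctly: the algebraic-identity fallback you offer is the one that actually closes the proof.
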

\begin{proof}
    Let $(\lambda_i, \phi_i)_{i=1}^{\infty}$ be the eigenvalue-eigenvector pairs of the negative Laplacian operator $-\Delta$, i.e., $-\Delta \phi_i = \lambda_i \phi_i$. For $u^1 = \Phi_{\text{C}}(u^0)$, we write:
\[
u^0(x) = \sum_{j=1}^\infty U_j^0 \phi_j(x),\ u^{1}(x) = \sum_{j=1}^\infty U_j^{1} \phi_j(x), \ f(x) = \sum_{j=1}^\infty f_j \phi_j(x), \ \text{for } U^0_j, U_j^{1}, f_j \in \mathbb{R},
\]
and replace into the Crank-Nicholson scheme to get,
\[
 \sum_{j=1}^\infty \left( \frac{U_j^{1} - U_j^0}{\Delta t} + \frac{\lambda_j}2 U_j^{1}  + \frac{\lambda_j}2 U_j^{0} - f_j \right) \phi_j(x) = 0.
\]
Using the fact that $\phi_j$ are orthogonal to each other, after some rearrangement:
\begin{equation} \label{eqn:cn_int}
    U_j^{1} = \frac{1-\frac{\Delta t}{2} \lambda_j }{1+\frac{\Delta t}{2} \lambda_j}
 U_j^0+ \frac{\Delta t}{1+\frac{\Delta t}{2} \lambda_j}f_j \quad \forall j \geq 1.
\end{equation}
Let $v^1= \Phi_{\text{C}}(v^0)$ for $v^0 \in \Omega_\Xcal$. We write $v^0(x) = \sum_{j=1}^\infty V_j^0 \phi_j(x),$ and $v^{1}(x) = \sum_{j=1}^\infty V_j^{1} \phi_j(x)$. For Dirichlet boundary conditions, $\lambda_j>0 \  \forall \ j$, implying $\left|\frac{1-\frac{\Delta t}{2} \lambda_j }{1+\frac{\Delta t}{2} \lambda_j}\right| \leq 1$. Thus,
\begin{align*}
    \left|V^1_j - U^1_j\right| \leq  \left|V^0_j - U^0_j\right| 
    \implies \|v^1 - u^1\|_\infty \leq \|v^0 -u^0\|_\infty \leq \|v^0-u^0\|_{C^2},
\end{align*}
demonstrating that the operator is Lipschitz. It also clearly has the low complexity structure as it consists of a linear integral transformation followed by a 1-dimensional non-linearity (as in Theorem \ref{thm:be_para}). Applying Theorem \ref{thm:low_complexity} proves the result.
\end{proof}

Under Assumption \ref{assump:multi_para_encoder}, similar results as in Theorem \ref{thm:be_para_multi} holds for the multi-input Crank-Nicholson operator $\Psi_{\text{C}}:C_0^2(\bar D) \times C(\bar{D}) \to C_0(\bar{D})$ defined as: $$\Psi_{\text{C}}(u^0,f) :=\left[1-\frac{\Delta t}2 \Delta \right]^{-1}\left[\left(1+\frac{\Delta t}2 \Delta\right) u^0 + \Delta t f(x)\right].$$
As the eigenvalues $\{\lambda_i\}$ of the negative Laplacian are positive, from \eqref{eqn:cn_int}, it follows that $\|\Psi_{\text{C}}(u^0,f) \|_\infty \leq \|u^0\|_{C^2}+\Delta t \|f\|_{\infty}$. So the Lipschitz constant for the operator is the same as that of $\Psi_{\text{B}}$. As this operator is again linear, the low complexity structure has maximal nonlinearity $d_{\max}=1$, and maximal width $\ell_{\max}=d_{\Xcal}$. Hence, we get the same generalization error estimate with the same order network parameters.

In the next section, we establish similar results for viscous conservation laws.

\section{Conservation Laws} \label{sec:cons_gen}

Conservation equations are a class of PDEs that describe the evolution of quantities conserved over time. They have applications in fluid dynamics \cite{perthame2002kinetic}, traffic flow \cite{colombo2003hyperbolic}, and gas dynamics \cite{rozhdestvenski_1983systems}, see \cite{leveque1992numerical,leveque2002finite,sod1978survey} and the references therein for numerical methods for these equations. However, discontinuities may arise in finite time for the solution, even for smooth initial data. Analytical solutions are only available in simple cases - such as linear flux functions, simple initial data, or when the method of characteristics can be effectively applied.  

Introducing a viscosity coefficient $\kappa>0$ regularizes the problem by smoothing out discontinuities that arise from the hyperbolic terms \cite{Hopf1950}. The second-order term introduces diffusion, which dampens high-frequency oscillations and ensures the existence of well-behaved, physically meaningful solutions. Moreover, the viscosity term can be viewed as a modeling tool that captures physical processes such as heat conduction or internal friction. 
Viscous conservation laws take the general form:
\begin{align}
\begin{cases}
     &{\partial_t u (x,t)} + \partial_x{f}(u(x,t)) = \kappa \Delta u(x,t),\ x \in D \subseteq \mathbb{R}^d,\ t \in (0,T)\,,
    \\
    &u(x,0) = u^0(x) ,\ x \in D,
\end{cases}\label{eqn:cons_law}
\end{align}
where \( u \) represents the conserved quantity (such as density or momentum), and \( {f}(u) \in C^1(\mathbb{R}) \) is the flux function that governs the flow of \( u \) through space. We assume homogeneous Dirichlet boundary conditions and a sufficiently smooth boundary $\partial D$. Burgers' equation is a classic example of a conservation law with $f(u) = \frac{1}{2}u^2$.

For $u^0 \in H_0^1(D)$, discretizing \eqref{eqn:cons_law} only in time using the Backward Euler method gives the semi-discrete solution $u^1$ at time $\Delta t$:
\begin{align}
    u^1 &= u^0 - \Delta t \partial_x f(u^1) + \kappa \Delta t \Delta u^1 \nonumber \\
    \iff u^1 &= \left[1 - \kappa \Delta t \Delta\right]^{-1}\left(u^0  - \Delta t \partial_x f(u^1)\right) =: \Phi_{\text{B}}(u^0, u^1).
    \label{eqn:imp_cons2}
\end{align}
Equation \eqref{eqn:imp_cons2} may be solved for $u^1$ using Picard's fixed point method with iterations: 
\begin{align} 
    u^{(0)} = u^0,\  u^{(i)}  =\Phi_{\text{B}}(u^0, u^{(i-1)}) \text{ for } 0 < i\leq m,\ u^1 \approx u^{(m)} =: \Phi_{\text{P}}(u^0). \label{eqn:cons_pic}
\end{align}
The operator $\Phi_{\text{P}}:H^1_0(\bar D) \ni u^0 \mapsto u^{(m)} \in H^1_0(\bar D)$ refers to the BE method used in conjunction with $m$-steps of Picard's iteration. We assume that the iterates themselves are equipped with the homogeneous Dirichlet boundary conditions imposed on the continuous formulation \eqref{eqn:cons_law}. Note that when $m=1$, the implicit method solved with Picard's fixed point algorithm is equivalent to the IMEX scheme.

Similarly, the multi-input BE operator $\Psi_{\text{P}}:H_0^1(\bar D)\times C^1(\mathbb{R})\to H_0^1(\bar D)$ for the conservation law is defined as:
\begin{align} \label{eqn:cons_pic_multi}
 u^{(i)}=  \left[1 - \kappa \Delta t \Delta\right]^{-1}\left(u^0  - \Delta t \partial_x f(u^{(i-1)})\right),0 < i\leq m,\   u^{(m)} =: \Psi_{\text{P}}(u^0,f),
\end{align}
 We derive generalization error bounds for both cases in this section.
For simplicity, we take the viscosity coefficient $\kappa$ to be above a certain threshold. First we consider the case where the input is solely the initial condition.
\begin{assumption} \label{assump:lip_flux}
    The flux function is $L_f$-Lipschitz continuous.
\end{assumption}

\begin{proposition} \label{prop:cons_lip}
   Let $\Omega_0$  be a compact subset of $H^1_0(D)$. Suppose Assumption \ref{assump:lip_flux} holds, and $\kappa \geq \frac{ {L}_f}2$. The time-stepping operator $\Phi_{\text{P}}: \Omega_0  \to  H_0^1(D) $ defined in \eqref{eqn:cons_pic} is Lipschitz continuous. 
   For $
   L_{\text{P}}:= (1+\Delta t L_f)/{\min\bigl\{1,\Delta t \left(2\kappa - {{L}_f}\right)\bigr\}}$, we have:
   \[\|\Phi_{\text{P}}(u^0) - \Phi_{\text{P}}(v^0)\|_{H^1} \leq L_{\text{P}}^{\frac m2}\|u^0 - v^0\|_{H^1}, \quad \text{ for } u^0,v^0 \in \Omega_0.\] 
\end{proposition}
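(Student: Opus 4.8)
The plan is a discrete energy estimate for the Picard iteration, carried out in the weak $H^1_0$ formulation so that only the Lipschitz regularity of $f$ is used. Let $u^{(i)}$ and $\tilde u^{(i)}$, $0\le i\le m$, denote the Picard iterates from $u^0$ and $v^0$ defined in \eqref{eqn:cons_pic}, and set $e^{(i)} = u^{(i)} - \tilde u^{(i)}$, $e^0 = u^0 - v^0$; by the imposed homogeneous Dirichlet boundary conditions $e^{(i)}\in H^1_0(D)$ for every $i$, and $e^{(0)} = e^0$. Subtracting the two Picard steps, $e^{(i)}$ solves $e^{(i)} - \kappa\Delta t\,\Delta e^{(i)} = e^0 - \Delta t\,\partial_x\bigl(f(u^{(i-1)}) - f(\tilde u^{(i-1)})\bigr)$, whose weak form, obtained by integrating the flux term by parts against $v\in H^1_0(D)$ (the boundary contribution vanishes since $v=0$ on $\partial D$), reads
\[
\langle e^{(i)},v\rangle + \kappa\Delta t\,\langle \nabla e^{(i)},\nabla v\rangle \;=\; \langle e^0,v\rangle + \Delta t\,\langle f(u^{(i-1)}) - f(\tilde u^{(i-1)}),\,\partial_x v\rangle .
\]
Only $f(u^{(i-1)})\in L^2$ is needed here, so no derivatives of $f$ enter and $L_f$-Lipschitz continuity suffices.

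Next I would test with $v = e^{(i)}$ and bound the right-hand side. Cauchy--Schwarz and Young give $\langle e^0,e^{(i)}\rangle\le\tfrac12\|e^0\|_{L^2}^2+\tfrac12\|e^{(i)}\|_{L^2}^2$, and the pointwise estimate $|f(u^{(i-1)}) - f(\tilde u^{(i-1)})|\le L_f|e^{(i-1)}|$ together with Young gives $\Delta t\,\langle f(u^{(i-1)}) - f(\tilde u^{(i-1)}),\partial_x e^{(i)}\rangle\le\tfrac{\Delta t L_f}{2}\|e^{(i-1)}\|_{L^2}^2+\tfrac{\Delta t L_f}{2}\|\nabla e^{(i)}\|_{L^2}^2$. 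Absorbing $\tfrac12\|e^{(i)}\|_{L^2}^2$ and $\tfrac{\Delta t L_f}{2}\|\nabla e^{(i)}\|_{L^2}^2$ into the left side — exactly where the hypothesis $\kappa\ge L_f/2$ is used, so that the gradient coefficient stays nonnegative — and multiplying by $2$ yields the per-step inequality
\[
\|e^{(i)}\|_{L^2}^2 + \Delta t\,(2\kappa - L_f)\,\|\nabla e^{(i)}\|_{L^2}^2 \;\le\; \|e^0\|_{L^2}^2 + \Delta t L_f\,\|e^{(i-1)}\|_{L^2}^2 .
\]

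I would then lower-bound the left side by $\min\{1,\Delta t(2\kappa - L_f)\}\,\|e^{(i)}\|_{H^1}^2$ and, using $\|\cdot\|_{L^2}\le\|\cdot\|_{H^1}$, upper-bound the right side by $(1+\Delta t L_f)\max\{\|e^0\|_{H^1}^2,\|e^{(i-1)}\|_{H^1}^2\}$, which gives $\|e^{(i)}\|_{H^1}^2\le L_{\text{P}}\max\{\|e^0\|_{H^1}^2,\|e^{(i-1)}\|_{H^1}^2\}$ with $L_{\text{P}}$ exactly as in the statement. Since the numerator of $L_{\text{P}}$ is at least $1$ and its denominator at most $1$, we have $L_{\text{P}}\ge1$, so an easy induction starting from $e^{(0)} = e^0$ gives $\|e^{(i)}\|_{H^1}^2\le L_{\text{P}}^{\,i}\|e^0\|_{H^1}^2$ for all $i$; taking $i=m$ and a square root yields the claim.

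The main obstacle — and the reason the exponent is $m/2$ rather than $m$ — is twofold. First, the Picard recursion is not a plain contraction: the fixed datum difference $e^0$ re-enters at each step, so the natural bound is $\|e^{(i)}\|_{H^1}^2\lesssim\|e^0\|_{H^1}^2 + \Delta t L_f\|e^{(i-1)}\|_{H^1}^2$, and closing it requires the $\max$-formulation together with $L_{\text{P}}\ge1$, which is why the per-step estimate is naturally on the \emph{squared} norm. Second, the energy identity controls only the weighted quantity $\|e^{(i)}\|_{L^2}^2 + \kappa\Delta t\|\nabla e^{(i)}\|_{L^2}^2$, so converting it into a bound on $\|e^{(i)}\|_{H^1}$ forces the $\min\{1,\Delta t(2\kappa - L_f)\}$ factor in the denominator; one should also note that the borderline case $2\kappa = L_f$ renders the estimate vacuous, so it is meaningful precisely when $\kappa>L_f/2$.
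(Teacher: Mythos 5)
Your proof is correct, and it follows the same energy-method blueprint as the paper: multiply the difference of the iterates by itself, integrate by parts to transfer the spatial derivative off the flux term, invoke the $L_f$-Lipschitz bound and Young's inequality, absorb the gradient term using $\kappa \geq L_f/2$, and convert the resulting weighted quantity into an $H^1$ bound via the $\min\{1,\Delta t(2\kappa - L_f)\}$ factor. There is, however, one genuine and instructive difference. The paper writes the iterate-difference identity with $w^{(m-1)}$ in the place where the defining recursion \eqref{eqn:cons_pic} actually puts $w^0 = u^0 - v^0$ (since each Picard step is $u^{(i)} = \Phi_{\mathrm{B}}(u^0, u^{(i-1)})$ with a \emph{fixed} first argument); the paper's form would correspond to iterating $\Phi_{\mathrm{B}}(u^{(i-1)},u^{(i-1)})$. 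As a consequence, the paper obtains a clean one-step contraction $\|w^{(i)}\|_{H^1}^2 \le L_{\mathrm{P}}\|w^{(i-1)}\|_{H^1}^2$ and telescopes. Your version keeps the $e^0$ that re-enters the right-hand side at every step, which is the faithful reading of \eqref{eqn:cons_pic}; you then close the recursion with the $\max$-argument plus the observation $L_{\mathrm{P}} \ge 1$, arriving at the same final bound $\|e^{(m)}\|_{H^1} \le L_{\mathrm{P}}^{m/2}\|e^0\|_{H^1}$. So your derivation is slightly longer but matches the stated scheme exactly, whereas the paper's appears to contain a small slip that happens to simplify the recursion without changing the end result. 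Your closing remarks that the squared norm is the natural unit and that the bound degenerates at $2\kappa = L_f$ are also accurate and would be worth retaining in a final write-up.
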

\textit{Proof:} Denote $w^0 = u^0 -v^0$ and $w^{(i)} = u^{(i)} - v^{(i)}$ for $u^{(i)}= \Phi_{\text{B}}(u^0, u^{(i-1)})$ and $v^{(i)}= \Phi_{\text{B}}(v^0, v^{(i-1)})$ for $1 \leq i \leq m$. Then, by definition: 
\begin{align}
    w^{(m)} 
    &= w^{(m-1)} - \Delta t \partial_x \left(f(u^{(m-1)})-f(v^{(m-1)})\right) + \kappa \Delta t \Delta w^{(m)}.
\end{align}
Multiplying by $w^{(m)}$ and integrating over $D$, after some rearrangement, we obtain:
\begin{align}
    \int_{D} \left[\left(w^{(m)}\right)^2 - w^{(m)} w^{(m-1)} \right]\,dx + \Delta t &\int_{D} w^{(m)} \partial_x \left(f(u^{(m-1)})-f(v^{(m-1)})\right) \,dx \nonumber \\
    &= \Delta t \kappa \int_{D} w^{(m)} \Delta w^{(m)} \,dx.
\end{align}
The inequality $\frac{b^2-a^2}{2} \leq  b^2-ab $ gives a lower bound for the first term:
\begin{align}
     \frac12 \int_{D} \left[\left(w^{(m)}\right)^2 - \left(w^{(m-1)}\right)^2 \right]\,dx + \Delta t \int_{D} &w^{(m)} \partial_x \left(f(u^{(m-1)})-f(v^{(m-1)})\right) \,dx \nonumber \\ \label{eqn:cons_step}
     &\leq  \Delta t \kappa \int_{D} w^{(m)} \Delta w^{(m)} \,dx.
\end{align}
Next, integrating by parts the last two terms with the boundary terms vanishing due to homogeneous Dirichlet boundary conditions:
\begin{align}
     \frac12 \int_{D} \left[(w^{(m)})^2 - (w^{(m-1)})^2 \right]\,dx &- \Delta t \int_{D} \partial_x w^{(m)} \left(f(u^{(m-1)})-f(v^{(m-1)})\right) \,dx \nonumber \\
     &\leq  -\Delta t \kappa \int_{D}  (\partial_x w^{(m)})^2 \,dx. \label{eqn:cons_ineq_pic}
\end{align}
Using the Lipschitz property of the flux function, and then Young's inequality,
\begin{align*}
    \int_{D} \partial_x w^{(m)} \left(f(u^{(m-1)})-f(v^{(m-1)})\right)& \,dx \leq L_f \int_{D} |\partial_x w^{(m)}| \cdot |w^{(m-1)}| \,dx \\
    &\leq \frac{L_f}2 \left[\int_{D} \left(\partial _x w^{(m)}\right)^2 \,dx +  \int_{D} \left(w^{(m-1)}\right)^2 \,dx\right]
\end{align*}
Substituting this estimate into \eqref{eqn:cons_ineq_pic}, and rearranging:
\begin{align}
        \int_{D} (w^{(m)})^2 \,dx +\Delta t \left(2\kappa - {{L}_f}\right) \int_{D}  (\partial_x w^{(m)})^2 \,dx &\leq   ({1+\Delta t L_f}) \int_{D} (w^{(m-1)})^2 \,dx \nonumber\\
        &\leq (1+\Delta t L_f)\|w^{(m-1)}\|_{H^1}^2\label{eqn:pic_cons_bd}
\end{align}
When $\kappa \geq \frac{{L}_f}2$, the second term on the left side of the inequality is positive. Thus, when the diffusion term dominates, applying \eqref{eqn:pic_cons_bd} recursively:
\begin{align}
  \min \bigl\{1,\Delta t \left(2\kappa - L_f\right) \bigr\}  \|w^{(m)}\|^2_{H^1} \leq  \left(1+\Delta t L_f\right)^m \|w^0\|^2_{H^1}.
\end{align}
This gives the desired result.

\begin{assumption}  \label{assump:cons_encoder}
   A discretization encoder is used to encode $u^0\in H^1_0(\bar{D})$ to $\mathbb{R}^{d_\Xcal}$.
\end{assumption}

\begin{proposition} \label{prop:cons_lc}
  Suppose Assumption \ref{assump:cons_encoder} holds. The operator $\Phi_{\text{P}}: H_0^1(D) \to H_0^1(D)$ defined in \eqref{eqn:cons_pic}, under Assumption \ref{assump:cons_encoder},  has the low complexity structure in Definition \ref{defn:low_comp} with $d_{\max} =2$, and $l_{\max}=3d_{\Xcal}$.
\end{proposition}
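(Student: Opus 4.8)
The plan is to unroll the $m$ Picard iterations in \eqref{eqn:cons_pic} into a bounded-depth composition of layers, each a linear map followed by a pointwise nonlinearity of dimension at most $2$, mirroring the argument for Proposition~\ref{lem:pic_lc}. Since the discretization encoder sends $u^0$ to its grid values $\mathrm{u}^0\in\mathbb{R}^{d_\Xcal}$, it suffices to realize the discretized iteration
\[
\mathrm{u}^{(i)} = L\bigl(\mathrm{u}^0 - \Delta t\, D_x f(\mathrm{u}^{(i-1)})\bigr),\qquad 1\le i\le m,
\]
where $L$ holds the numerical weights of $[1-\kappa\Delta t\Delta]^{-1}$, $D_x$ is the discrete first-derivative matrix, and $f$ is applied componentwise. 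The key observation I would use is that the only genuinely nonlinear ingredient, the flux derivative $\partial_x f(u^{(i-1)})$, equals $f'(u^{(i-1)})\,\partial_x u^{(i-1)}$ by the chain rule, so in encoded form it is the componentwise product of the pointwise vector $f'(\mathrm{u}^{(i-1)})$ with the linear image $D_x\mathrm{u}^{(i-1)}$; this product is the single place a two-dimensional nonlinearity is needed.

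Next I would describe the three-layer block realizing iteration $i$, on a running state that always retains $\mathrm{u}^0$ (a skip connection, since $\mathrm{u}^0$ re-enters every Picard step). Layer~A takes $[\mathrm{u}^0;\mathrm{u}^{(i-1)}]$ to $[\mathrm{u}^0;\,f'(\mathrm{u}^{(i-1)});\,D_x\mathrm{u}^{(i-1)}]$: identities $g(x)=x$ pass through the first block, the pointwise map $g(x)=f'(x)$ with each $V$ selecting one coordinate of $\mathrm{u}^{(i-1)}$ produces the second, and linear functionals given by the rows of $D_x$ (with $g(x)=x$) produce the third. This is the widest layer, of width $3d_\Xcal$. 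Layer~B applies $g(x,y)=xy$ coordinatewise to the pair of blocks $(f'(\mathrm{u}^{(i-1)}),D_x\mathrm{u}^{(i-1)})$ to form $\partial_x f(\mathrm{u}^{(i-1)})$ while keeping $\mathrm{u}^0$; it has width $2d_\Xcal$ and is where $d_{\max}=2$ is attained. Layer~C is purely linear: it sends $[\mathrm{u}^0;\partial_x f(\mathrm{u}^{(i-1)})]$ to $[\mathrm{u}^0;\mathrm{u}^{(i)}]$ via the rows of $[L,\ -\Delta t\,L]$ together with identities for $\mathrm{u}^0$.

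Composing the $m$ blocks and appending one final linear layer that discards the $\mathrm{u}^0$ block, I obtain a map of the form $G^{3m}\circ\cdots\circ G^1$ taking $\mathrm{u}^0$ to $\mathrm{u}^{(m)}$. Since $\Xcal=\Ycal=H^1_0(D)$, the same encoder--decoder pair is used and $d_\Ycal=d_\Xcal$, so $D_\Ycal\circ(G^{3m}\circ\cdots\circ G^1)\circ E_\Xcal=\Pi_{\Ycal,d_\Ycal}\circ\Phi_{\text{P}}$, which is precisely Definition~\ref{defn:low_comp} with $k=3m$, all nonlinearity dimensions in $\{1,2\}$ so $d_{\max}=2$, and all widths at most $3d_\Xcal$ so $\ell_{\max}=3d_\Xcal\lesssim\max\{d_\Xcal,d_\Ycal\}$.

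The main obstacle is handling the flux derivative correctly: a naive discretization $D_x[f(\mathrm{u}^{(i-1)})]$ would not need the product layer at all, but using the chain-rule form is what exposes the uniform componentwise-product structure (and is in any case forced once $f$ is itself an input, as in \eqref{eqn:cons_pic_multi}) while keeping every nonlinearity at most two-dimensional. The rest---verifying that $\mathrm{u}^0$ can be threaded through all $m$ blocks without the width ever exceeding $3d_\Xcal$, and that each claimed component map genuinely has the form $g_j^i\!\bigl((V_j^i)^\top a\bigr)$---is routine bookkeeping in the style of Proposition~\ref{lem:pic_lc}.
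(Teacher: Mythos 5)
Your construction is essentially the paper's: both unroll the Picard iteration into blocks that carry $\mathrm{u}^0$ as a skip connection, use the chain rule $\partial_x f(u)=f'(u)\partial_x u$ so that the only nonlinearity is a two-dimensional pointwise map, and reach $d_{\max}=2$, $\ell_{\max}=3d_\Xcal$. The one cosmetic difference is that you split the flux-derivative step into two layers ($g(x)=f'(x)$ then $g(x,y)=xy$, giving $k=3m$), whereas the paper fuses them into a single nonlinearity $g(x,y)=f'(x)y$ and so uses two layers per iteration ($k=2m$), which is the depth later quoted in Theorem \ref{thm:imp_cons}.
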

\begin{proof}
    We show that each iteration of the mapping maintains a low-complexity structure. Let $T$ denote the numerical differentiation matrix corresponding to the partial derivative operator $\partial_x$, and $L$ denotes the numerical integration weight matrices corresponding to $[1-\Delta t\Delta]^{-1}$. Then, in the first layer, for $j \in \{1, \dots, d_\Xcal\}$, we set both rows of $V^1_j\in \mathbb{R}^{d_\Xcal\times 2}$ to be the Dirac-delta vector at $x_j$ and $g^1_j(x,y)=x$. For $j \in \{d_\Xcal +1, \dots, 2d_\Xcal\}$, let $k= j - d_\Xcal$. We set $V^1_j$ so that its first row is the Dirac-delta vector at $x_k$ and the second row is the $k$th row of the numerical differentiation weights $T$. The corresponding non-linearity is $g^1_j(x,y) = f'(x)y$. Finally, for $j \in \{2d_\Xcal +1,\dots, 3d_\Xcal\}$, let $\tilde{k}= j -2 d_\Xcal$. We define $V^1_j$ similarly with the first row being the Dirac-delta vector at $x_{\tilde{k}}$ and the second row is the ${\tilde{k}}$-row of the numerical integration weights $L$, with nonlinearity $g^1_j(x,y) = y.$ 
    To summarize, after the linear transformations, the encoded input $\mathrm{u}^0=E_\Xcal(u^0)$ is mapped to: \[\begin{bmatrix}
        \mathrm{u}^0,\mathrm{u}^0;\, \mathrm{u}^0,T \mathrm{u}^0;\, \mathrm{u}^0,L\mathrm{u}^0 \,
    \end{bmatrix} \in \mathbb{R}^{3d_\Xcal \times 2}.\]   
    Then applying the two-dimensional non-linearities specified above, we get as output $[\mathrm{u}^0;\ f'(\mathrm{u}^0) T \mathrm{u}^0;\ L\mathrm{u}^0] \in \mathbb{R}^{3d_\Xcal}$. A linear transformation in the next layer gives: \[[\mathrm{u}^0;\   L\mathrm{u}^0 -\Delta t  L\left(f'(\mathrm{u}^0\right) T \mathrm{u}^0 ) ]^T \in \mathbb{R}^{2d_\Xcal}.\] Notably, the second term corresponds to the first iterate. Extending this process iteratively establishes the desired result.
\end{proof}

\begin{theorem} \label{thm:imp_cons}
   Suppose Assumptions \ref{assump:compact_supp}, \ref{assump:lip_flux}-\ref{assump:cons_encoder} hold, and $\kappa \geq L_f/2$. Let $\GNN$ be the minimizer of optimization (\ref{eqn:optimization}) where the target operator $\Phi_{\text{P}}: L^2(D) \to L^2(D)$ is defined in \eqref{eqn:cons_pic}. Then for the network architecture $\mathcal{F}_\mathrm{NN}(d_{\Xcal},2mL,p,M)$ defined in (\ref{eqn:FNN}) with parameters:
\begin{equation*}
\begin{aligned}
		Lp =  \Omega \left( d_{\Xcal}^{\frac{1}{4}} n^{\frac{1}{4}} \right)
  \,, M \geq \sqrt{\ell_{d_\Ycal}} L_{E_{\Ycal}} R_{\Ycal} \,,
\end{aligned}
\end{equation*}	
 we have for $
   L_{\text{P}}:= (1+\Delta t L_f)/{\min\bigl\{1,\Delta t \left(2\kappa - {{L}_f}\right)\bigr\}}$,
	\begin{equation*}
		\begin{aligned}
\mathcal{E}_{\text{gen}}\lesssim  L_{\text{P}}^m(d_\Xcal)^{\frac{5}{2}} n^{-\frac{1}{2}} \log n + \mathcal{E}_{\text{proj}} \,,
		\end{aligned}
	\end{equation*}
	where the constants in $\lesssim$ and $\Omega(\cdot)$ solely depend on $m, p,\ell_\text{max},R_\Ycal,L_{E_{{\Xcal}}},$ and $L_{D_{{\Xcal}}}$. 
\end{theorem}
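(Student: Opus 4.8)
The plan is to obtain the estimate as a direct corollary of Theorem~\ref{thm:low_complexity}: that theorem needs only a Lipschitz constant for the target operator and a low-complexity factorization of it in the sense of Definition~\ref{defn:low_comp}, and both of these have already been supplied for $\Phi_{\text{P}}$ by Propositions~\ref{prop:cons_lip} and~\ref{prop:cons_lc}. So the proof is essentially a matter of verifying the hypotheses and matching parameters.

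First I would check that Assumptions~\ref{assump:compact_supp}--\ref{assump:low_complexity} are in force. Assumption~\ref{assump:compact_supp} is hypothesized. Since $\Ycal=\Xcal=H^1_0(D)$, one discretization encoder/decoder pair (Assumption~\ref{assump:cons_encoder}) serves both spaces, so Assumption~\ref{assump:lip_enco} holds and $d_\Ycal=d_\Xcal$. Assumption~\ref{assump:Lipschitz} follows from Proposition~\ref{prop:cons_lip}: under Assumption~\ref{assump:lip_flux} and $\kappa\ge L_f/2$ (read with $L_{\text{P}}$ finite, i.e. $\Delta t(2\kappa-L_f)>0$), $\Phi_{\text{P}}$ is Lipschitz with $L_{\mathcal{S}_{\Delta t}}=L_{\text{P}}^{m/2}$ for $L_{\text{P}}=(1+\Delta t L_f)/\min\{1,\Delta t(2\kappa-L_f)\}$; hence the image set $\Omega_1=\Phi_{\text{P}}(\Omega_0)$ is bounded by $R_\Ycal=L_{\text{P}}^{m/2}R_\Xcal$. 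Assumption~\ref{assump:low_complexity} follows from Proposition~\ref{prop:cons_lc}, which exhibits the factorization of Definition~\ref{defn:low_comp} for $\Phi_{\text{P}}$ with $k=2m$ layers (two per Picard iteration), maximal nonlinearity dimension $d_{\max}=2$, and maximal intermediate width $\ell_{\max}=3d_\Xcal$.

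Next I would substitute $d_{\max}=2$, $\ell_{\max}=3d_\Xcal$, $k=2m$, $d_\Ycal=d_\Xcal$, and $L_{\mathcal{S}_{\Delta t}}=L_{\text{P}}^{m/2}$ into Theorem~\ref{thm:low_complexity}. The architecture condition $Lp=\Omega\big(d_\Xcal^{(4-d_{\max})/(4+2d_{\max})}n^{d_{\max}/(4+2d_{\max})}\big)$ collapses to $Lp=\Omega(d_\Xcal^{1/4}n^{1/4})$, the amplitude threshold becomes $M\ge\sqrt{\ell_{\max}}\,L_{E_\Ycal}R_\Ycal$, and the network class is $\FNN(d_\Xcal,2mL,p,M)$, all as stated. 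The error bound becomes
\[
\mathcal{E}_{\text{gen}}\lesssim L_{\text{P}}^{m}\log\!\left(L_{\text{P}}^{m/2}\right)(3d_\Xcal)^{5/2}\,n^{-1/2}\log n+\mathcal{E}_{\text{proj}},
\]
using $(8+d_{\max})/(2+d_{\max})=5/2$ and $-2/(2+d_{\max})=-1/2$. Absorbing the numerical constant $3^{5/2}$ and the slowly varying factor $\log(L_{\text{P}}^{m/2})$ into the implicit constant yields $\mathcal{E}_{\text{gen}}\lesssim L_{\text{P}}^{m}(d_\Xcal)^{5/2}n^{-1/2}\log n+\mathcal{E}_{\text{proj}}$, with the hidden constants depending only on $m,p,\ell_{\max},R_\Ycal,L_{E_\Xcal},L_{D_\Xcal}$, inherited verbatim from Theorem~\ref{thm:low_complexity}.

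The one point requiring care — and the main obstacle such as it is — is the norm in which $\Phi_{\text{P}}$ is treated: Proposition~\ref{prop:cons_lip} gives the contraction estimate in the $H^1$ norm, whereas the theorem phrases $\Phi_{\text{P}}$ on $L^2(D)$. For the chain of inequalities behind Theorem~\ref{thm:low_complexity} to close, the operator Lipschitz constant, the encoder/decoder Lipschitz constants, and the image radius $R_\Ycal$ must all refer to a single norm; I would therefore run the argument with $\Xcal=\Ycal=H^1_0(D)$ throughout, which is legitimate since $\Omega_0$ is compact in $H^1_0(D)$ by Assumption~\ref{assump:compact_supp}, and note that $\|\cdot\|_{L^2}\le\|\cdot\|_{H^1}$ only helps. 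With this convention fixed, and with the borderline case $2\kappa=L_f$ excluded so that $L_{\text{P}}<\infty$, everything else is routine bookkeeping and no new estimate is needed.
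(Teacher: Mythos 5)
Your proposal is correct and follows the same route as the paper's proof, which simply cites Propositions~\ref{prop:cons_lip} and~\ref{prop:cons_lc} and appeals to Theorem~\ref{thm:low_complexity}; you have merely spelled out the parameter substitutions ($d_{\max}=2$, $\ell_{\max}=3d_\Xcal$, $L_{\mathcal{S}_{\Delta t}}=L_{\text{P}}^{m/2}$, hence $L_{\mathcal{S}_{\Delta t}}^2=L_{\text{P}}^m$) that the paper leaves implicit. Your observation that the theorem's stated domain $L^2(D)$ is inconsistent with Proposition~\ref{prop:cons_lip}, which works in $H^1_0(D)$, and that the argument should be run in $H^1_0(D)$ throughout, is a legitimate correction to a small slip in the paper's statement.
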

\begin{proof}
    Propositions \ref{prop:cons_lip}-\ref{prop:cons_lc} imply that the operator satisfies the assumptions of Theorem \ref{thm:low_complexity}. The result follows as a consequence.
\end{proof}

\begin{remark}
    For $m=1$, Theorem \ref{thm:imp_cons} establishes a generalization error estimate for first order IMEX methods for conservation laws.
\end{remark}

Finally, we establish the generalization error estimate for the multi-input operator using intermediate results from the proof of Theorem \ref{prop:cons_lip}.

\begin{assumption}
    \label{assump:flux_enco} A basis encoder $\eqref{eqn:basis}$ encodes the flux $f\in \Omega_p \subset C^1(\mathbb{R})$ to $\mathbb{R}^{d_{\Xcal_2}}$.
\end{assumption}

\begin{theorem} 
   Suppose Assumptions \ref{assump:compact_supp}, \ref{assump:lip_flux}-\ref{assump:flux_enco} hold, and $\kappa \geq (1+L_f)/2$. Let $\GNN$ be the minimizer of optimization (\ref{eqn:optimization}) where the target operator $\Psi_{\text{P}}: H_0^1(D) \times C^1(\mathbb{R}) \to H_0^1(D)$ is defined in \eqref{eqn:cons_pic_multi}. For the network architecture $\mathcal{F}_\mathrm{NN}(d_{\Xcal},2mL,p,M)$ defined in (\ref{eqn:FNN}) with parameters:
\begin{equation*}
\begin{aligned}
		Lp =  \Omega \left( d_{\Xcal}^{\frac{1}{4}} n^{\frac{1}{4}} \right)
  \,, M \geq \sqrt{\ell_{\max}} L_{E_{\Ycal}} R_{\Ycal} \,, \ \ell_{\max}= d_{\Xcal_1}d_{\Xcal_2}+d_{\Xcal_1}+d_{\Xcal_2},
\end{aligned}
\end{equation*}	
 we have for $
   L_{\text{M,P}}:= (1+\Delta t L_f)/{\min\bigl\{1,\Delta t \left(2\kappa - {{L}_f}\right)\bigr\}}$, 
	\begin{equation*}
		\begin{aligned}
\mathcal{E}_{\text{gen}}\lesssim   L_{\text{M,P}}^m(d_\Xcal)^{\frac{5}{2}} n^{-\frac{1}{2}} \log n + \mathcal{E}_{\text{proj}} \,,
		\end{aligned}
	\end{equation*}
	where the constants in $\lesssim$ and $\Omega(\cdot)$ solely depend on $m, p,D, \ell_\text{max},R_\Ycal,L_{E_{{\Xcal}}},$ and $L_{D_{{\Xcal}}}$. 
\end{theorem}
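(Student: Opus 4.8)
The strategy follows the single-input case (Theorem~\ref{thm:imp_cons}): I would verify that $\Psi_{\text{P}}$ satisfies the hypotheses of Theorem~\ref{thm:low_complexity} for the discretization encoder on $u^0$ (Assumption~\ref{assump:cons_encoder}) and the basis encoder on $f$ (Assumption~\ref{assump:flux_enco}), after which the generalization bound is immediate. Two facts must be established: Lipschitz continuity of $\Psi_{\text{P}}$ on the product space $H^1_0(D)\times C^1(\mathbb{R})$ with the sum norm, and an explicit low-complexity decomposition in the sense of Definition~\ref{defn:low_comp}. Since $\Ycal=\Xcal_1=H^1_0(D)$, the output reuses the same encoder, so $d_\Ycal=d_{\Xcal_1}$.

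\emph{Lipschitz estimate.} I would adapt the energy argument of Proposition~\ref{prop:cons_lip}. Fix $(u^0,f),(v^0,g)\in\Omega_0\times\Omega_p$, let $u^{(i)},v^{(i)}$ be the Picard iterates of \eqref{eqn:cons_pic_multi}, and put $w^{(i)}=u^{(i)}-v^{(i)}$. Subtracting the two update relations, multiplying by $w^{(m)}$ and integrating over $D$, and integrating by parts (boundary terms vanish under homogeneous Dirichlet conditions) reproduces the identity of Proposition~\ref{prop:cons_lip} with the flux increment now $f(u^{(m-1)})-g(v^{(m-1)})$. Splitting this as $[f(u^{(m-1)})-f(v^{(m-1)})]+[f(v^{(m-1)})-g(v^{(m-1)})]$, the first bracket is dominated pointwise by $L_f|w^{(m-1)}|$ as before, and the second by $\|f-g\|_{C^1}$ pointwise. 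Applying Young's inequality to each bracket absorbs an extra $\tfrac12\Delta t\int_D(\partial_x w^{(m)})^2\,dx$ into the viscous term, which is exactly why the hypothesis strengthens to $\kappa\ge(1+L_f)/2$; what survives is a recursion of the type
\[
\min\bigl\{1,\Delta t(2\kappa-L_f)\bigr\}\,\|w^{(m)}\|_{H^1}^2 \;\le\; (1+\Delta t L_f)\,\|w^{(m-1)}\|_{H^1}^2 + C(D)\,\Delta t\,\|f-g\|_{C^1}^2 .
\]
Iterating over the $m$ Picard steps with $w^{(0)}=u^0-v^0$ gives $\|\Psi_{\text{P}}(u^0,f)-\Psi_{\text{P}}(v^0,g)\|_{H^1}\lesssim L_{\text{M,P}}^{m/2}\bigl(\|u^0-v^0\|_{H^1}+\|f-g\|_{C^1}\bigr)$, i.e.\ Lipschitz continuity on the product space with constant of order $L_{\text{M,P}}^{m/2}$. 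Equivalently, by the triangle inequality: the fixed-flux part is covered by Proposition~\ref{prop:cons_lip}, and only the varying-flux part --- with $w^{(0)}=0$ --- needs the new estimate.

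\emph{Low-complexity structure.} Here I would combine the unrolling of Proposition~\ref{prop:cons_lc} with the treatment of a learned polynomial nonlinearity from Proposition~\ref{lem:pic_lc}. Encode $u^0$ to $\mathrm{u}^0\in\mathbb{R}^{d_{\Xcal_1}}$ by discretization, $f$ to $\mathrm{a}=[a_k]_{k=0}^{d_{\Xcal_2}-1}$ via \eqref{eqn:basis}, and use $\partial_x f(u)=T(f(u))$ with $T$ the numerical differentiation matrix and $f$ applied pointwise. Each Picard iteration unrolls into two layers. With state $[\mathrm{u}^{(i-1)};\mathrm{u}^0;\mathrm{a}]$, the first layer uses for each $(j,k)$ a linear projection selecting $(\mathrm{u}^{(i-1)}_j,a_k)$ and the two-dimensional nonlinearity $g_{j,k}(x,y)=y\,x^{k}$ to form the monomial $p_{j,k}=a_k(\mathrm{u}^{(i-1)}_j)^k$, plus identity maps carrying $\mathrm{u}^0,\mathrm{a}$ forward; its width is $d_{\Xcal_1}d_{\Xcal_2}+d_{\Xcal_1}+d_{\Xcal_2}$. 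The second layer is purely linear: it sums the $p_{j,k}$ over $k$ into $f(\mathrm{u}^{(i-1)})$, applies $T$, scales by $-\Delta t$, adds $\mathrm{u}^0$, applies the integration matrix of $[1-\kappa\Delta t\Delta]^{-1}$ to produce $\mathrm{u}^{(i)}$, and carries $\mathrm{u}^0,\mathrm{a}$ forward. Composing $m$ such pairs puts $\Psi_{\text{P}}$ in the form of Definition~\ref{defn:low_comp} with $k=2m$, $d_{\max}=2$, and $\ell_{\max}=d_{\Xcal_1}d_{\Xcal_2}+d_{\Xcal_1}+d_{\Xcal_2}$.

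Together these verify Assumptions~\ref{assump:compact_supp}--\ref{assump:low_complexity} for $\Psi_{\text{P}}$ with the fixed Lipschitz encoders of Assumption~\ref{assump:lip_enco}, so substituting $d_{\max}=2$, the above $\ell_{\max}$, and $L_{\mathcal{S}_{\Delta t}}\sim L_{\text{M,P}}^{m/2}$ into Theorem~\ref{thm:low_complexity} yields the claimed architecture ($Lp=\Omega(d_\Xcal^{1/4}n^{1/4})$, $M\ge\sqrt{\ell_{\max}}\,L_{E_\Ycal}R_\Ycal$) and generalization bound, the $m$ and $\log L_{\text{M,P}}$ factors being absorbed into the implicit constant. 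The Lipschitz step is the main obstacle: one must track precisely how the flux-difference term consumes part of the viscous dissipation --- the source of the $\kappa\ge(1+L_f)/2$ condition --- and verify that the Picard iterates remain in a fixed bounded interval, which follows from the $H^1$ bound and the one-dimensional embedding $H^1\hookrightarrow L^\infty$, so that the compositions with $f,g$ and their basis truncations are uniformly controlled. The low-complexity step is essentially bookkeeping of the kind already carried out in Propositions~\ref{prop:cons_lc} and \ref{lem:pic_lc}.
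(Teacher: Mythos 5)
Your proposal follows essentially the same route the paper sketches: modify the energy identity of Proposition~\ref{prop:cons_lip} and split the flux difference $f(u^{(m-1)})-g(v^{(m-1)})$ by the triangle inequality, absorbing the extra Young's-inequality term into the viscous dissipation (hence the strengthened hypothesis $\kappa\geq(1+L_f)/2$), and obtain the low-complexity structure by grafting the basis-coefficient bookkeeping of Proposition~\ref{lem:pic_lc} onto the unrolling of Proposition~\ref{prop:cons_lc}, giving $d_{\max}=2$ and $\ell_{\max}=d_{\Xcal_1}d_{\Xcal_2}+d_{\Xcal_1}+d_{\Xcal_2}$. Your version is in fact more careful than the paper's brief sketch --- in particular you make explicit that the bracketed flux split costs a second $\tfrac12\Delta t\int(\partial_x w^{(m)})^2$ term, and you flag the need for a uniform $L^\infty$ bound on the Picard iterates (via $H^1\hookrightarrow L^\infty$ in one dimension) so that the pointwise compositions with $f,g$ and their basis truncations are controlled, a point the paper leaves implicit.
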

The proof follows Theorem \ref{thm:imp_cons} with modifications for the multi-input case. A Lipschitz estimate on the operator may be obtained by modifying \eqref{eqn:cons_step} and using triangle inequality to decompose the corresponding flux terms and bounding each term separately using similar techniques as in the proof. The low complexity structure follows as in Proposition \ref{thm:gen_err_pic}, by noting that the highest degree of non-linearity comes from learning the two-dimensional non-linearities of individual components of the basis expansion $g_i(a,x) = a T_i(x)$ in \eqref{eqn:basis}. The maximal width comes from storing the intermediate representations leading to $\ell_{\max}=d_{\Xcal_1}d_{\Xcal_2}+d_{\Xcal_1}+d_{\Xcal_2}$. This gives us the bound, as a consequence of Theorem \ref{thm:low_complexity}.

\section{Global Error Accumulation}\label{sec:global_error}
We now analyze the error propagation when a neural network, $\GNN$, is used to recursively predict the solution over a long time horizon $T=N\Delta t$. Let $u(x,t)$ be the solution to \eqref{eqn:gen_pde} for $u^0 \in \Omega_\Xcal$. Let $\GNN$ be the minimizer of the optimization problem \eqref{eqn:optimization} trained on data $S=\{u_i^0,u_i^{1}\}$. The data pairs are generated using the time-stepping algorithm $\Phi_{\Delta t}$, i.e., $u_i^1 = \Phi_{\Delta t}(u_i^0)$ where $u_i^1$ is the approximate solution at $\Delta t$. The global accumulation error is defined as the expected deviation from the true solution $u(x,T)$ after $N$ steps:

\begin{align}
   \EgenN = \mathbb{E}_{u^0\sim \gamma} \left[\| u(T) - \underbrace{\Phi_{\text{NN}} \circ \dots \circ \Phi_{\text{NN}}}_{N \text{ times }} (u^0) \|_{\Ycal} \right] \label{eqn:egenn}
\end{align}
To study this error, we make the following assumptions on the analytical solution.
\begin{assumption} \label{assump:glob_bd}
    For initial condition $u^0 \in \Omega_\Xcal$, $u(x,t) \in C^2([0,T],\Ycal)$. Also, $\sup_{u^0 \in \Omega_\Xcal} \sup_{0 \leq t \leq T} \|u'(t)\|_{\Ycal} \leq L_0$ and $\sup_{u^0 \in \Omega_\Xcal} \sup_{0 \leq t \leq T} \|u''(t)\|_{\Ycal} \leq M_0$.
\end{assumption}
\begin{proposition}
   Suppose Assumptions \ref{assump:compact_supp}-\ref{assump:low_complexity} and \ref{assump:glob_bd} hold. Let $\GNN$ be the minimizer of the optimization (\ref{eqn:optimization}) where the training data $\mathcal{S} = \{(u_i^0, \Phi_{\Delta t}(u^0_i))\}_{i=1}^n$ for the target first-order time-stepping scheme $\Phi_{\Delta t}: \Xcal \to \Ycal$. Then, for $T=N\Delta t$:
\begin{align}
    \EgenN \leq \frac{M_0\Delta t (e^{T L_0}-1)}{2L_0} + \sum_{i=1}^{N}  L^{i-1}_{\Phi_{\Delta t}}\mathcal{E}_{gen} 
\end{align}
\end{proposition}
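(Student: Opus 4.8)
The plan is to split the global error $\EgenN$ into two pieces via the triangle inequality: the \emph{consistency error} between the exact flow $u(T)$ and the $N$-fold composition of the \emph{exact} time-stepping operator $\Phi_{\Delta t}$, and the \emph{learning error} between the $N$-fold composition of $\Phi_{\Delta t}$ and the $N$-fold composition of the trained network $\Phi_{\mathrm{NN}}$. Writing $u^n := \Phi_{\Delta t}^{\,n}(u^0)$ for the exact iterates and $\widehat u^{\,n} := \Phi_{\mathrm{NN}}^{\,n}(u^0)$ for the learned iterates, we have pointwise $\|u(T)-\widehat u^{\,N}\|_\Ycal \le \|u(T)-u^N\|_\Ycal + \|u^N-\widehat u^{\,N}\|_\Ycal$, and taking $\mathbb{E}_{u^0\sim\gamma}$ separates the two contributions. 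I would handle each term in turn.

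For the consistency term, the standard argument for a first-order scheme applies: the one-step local truncation error is $\mathcal{O}(\Delta t^2)$, controlled by the bound $\|u''(t)\|_\Ycal\le M_0$ from Assumption~\ref{assump:glob_bd} (Taylor expansion of $t\mapsto u(t)$ in the Banach space $\Ycal$ gives a per-step error $\le \tfrac{M_0}{2}\Delta t^2$), and the accumulated error over $N$ steps is amplified through a discrete Grönwall inequality whose growth factor comes from the Lipschitz-in-state stability of the exact flow — here encoded by $\|u'(t)\|_\Ycal\le L_0$, which yields the continuous-dependence constant $e^{L_0 t}$. Summing the geometric-type series $\sum_{j=0}^{N-1}\tfrac{M_0}{2}\Delta t^2 \, e^{j L_0 \Delta t}$ and bounding it by an integral gives exactly $\frac{M_0\Delta t(e^{TL_0}-1)}{2L_0}$.

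For the learning term, I would telescope the difference of the $N$-fold compositions. Using $u^N-\widehat u^{\,N} = \sum_{i=1}^{N}\bigl(\Phi_{\Delta t}^{\,N-i}(u^i) - \Phi_{\Delta t}^{\,N-i}(\widehat u^{\,i-1}\text{-pushed-one-step})\bigr)$ — more precisely, insert hybrid terms $\Phi_{\Delta t}^{\,N-i}\circ \Phi_{\mathrm{NN}}^{\,i}(u^0)$ for $i=0,\dots,N$ — each summand is the image under $\Phi_{\Delta t}^{\,N-i}$ of a one-step discrepancy $\Phi_{\Delta t}(\widehat u^{\,i-1}) - \Phi_{\mathrm{NN}}(\widehat u^{\,i-1})$. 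Since $\Phi_{\Delta t}$ is $L_{\Phi_{\Delta t}}$-Lipschitz (Assumption~\ref{assump:Lipschitz}), applying it $N-i$ times contributes a factor $L_{\Phi_{\Delta t}}^{\,N-i}$; relabeling makes the exponent $i-1$ as in the statement. Taking expectations and recognizing that $\mathbb{E}_{u^0\sim\gamma}\|\Phi_{\Delta t}(\widehat u^{\,i-1})-\Phi_{\mathrm{NN}}(\widehat u^{\,i-1})\|_\Ycal$ is controlled by the single-step generalization quantity $\mathcal{E}_{gen}$ gives the sum $\sum_{i=1}^N L_{\Phi_{\Delta t}}^{\,i-1}\mathcal{E}_{gen}$.

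The main obstacle is the last identification: $\mathcal{E}_{gen}$ in \eqref{eqn:gen_err} is defined as an expectation over $u\sim\gamma$ of the \emph{squared} $\Ycal$-error at inputs drawn from $\gamma$, whereas here we need an $L^1$-type one-step error evaluated at the \emph{learned iterates} $\widehat u^{\,i-1}$, whose distribution is the pushforward $(\Phi_{\mathrm{NN}}^{\,i-1})_\#\gamma$, not $\gamma$ itself. Bridging this requires either (i) Jensen's inequality to pass from the squared to the unsquared error (introducing a $\sqrt{\mathcal{E}_{gen}}$, which suggests the statement is really in terms of $\sqrt{\mathcal{E}_{gen}}$ up to notation), and (ii) a covering/stability argument or an assumption that the learned-iterate distributions stay within the support where the generalization bound is valid — effectively that $\Phi_{\mathrm{NN}}$ maps $\Omega_\Xcal$ into a set on which the trained network still enjoys its error guarantee. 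I would state this distributional compatibility as the implicit content of using $\mathcal{E}_{gen}$ at each step, note it as the price of the recursive prediction, and otherwise treat the telescoping and Grönwall steps as routine.
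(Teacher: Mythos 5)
Your approach is the same as the paper's: a triangle-inequality split into consistency and learning errors, the classical first-order global error bound for the first term, and a telescoping insertion of hybrids $\Phi_{\Delta t}^{N-i}\circ\Phi_{\mathrm{NN}}^{i}$ with the Lipschitz bound on $\Phi_{\Delta t}$ for the second. (The paper phrases the telescoping recursively rather than as a single sum, but the terms are identical.)

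The two caveats you raise in your final paragraph are genuine, and the paper's own proof quietly elides both. First, the paper's line ``$\mathbb{E}\bigl[\|\Phi_{\Delta t}^{N}(u^0)-\Phi_{\Delta t}^{N-1}\circ\Phi_{\mathrm{NN}}(u^0)\|_\Ycal\bigr]\le L_{\Phi_{\Delta t}}^{N-1}\mathcal{E}_{gen}$'' compares an $L^1$-in-expectation quantity to $\mathcal{E}_{gen}$, which is defined in \eqref{eqn:gen_err} as an $L^2$-squared expectation; a rigorous version requires Jensen and yields $\sqrt{\mathcal{E}_{gen}}$, exactly as you note. Second, after the first telescoping step the one-step mismatch is evaluated at $\Phi_{\mathrm{NN}}^{\,i-1}(u^0)$, whose law is the pushforward $(\Phi_{\mathrm{NN}}^{\,i-1})_{\#}\gamma$, not $\gamma$; invoking $\mathcal{E}_{gen}$ there requires that the learned iterates remain in a region where the generalization bound still applies (or that $\mathcal{E}_{gen}$ is reinterpreted as a uniform one-step error over the compact image set). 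You correctly flag these as the implicit price of the recursion rather than routine steps, which is a more careful reading than the paper's own write-up.
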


\begin{proof}
Let $\Phi_{\text{NN}}^N := \underbrace{\Phi_{\text{NN}} \circ \dots \circ \Phi_{\text{NN}}}_{N \text{ times }} (u^0)$ and $\Phi_{\Delta t}^N:= \underbrace{\Phi_{\Delta t} \circ \dots \circ \Phi_{\Delta t}}_{N \text{ times }}$. First, using the triangle inequality, \eqref{eqn:egenn} can be split into:
\begin{align}
\mathbb{E}_{u^0\sim \gamma} \left[\| u(T) - \Phi_{\text{NN}}^N (u^0) \|_{\Ycal} \right] &\leq \mathbb{E}_{u^0\sim \gamma} \left[ \| u(T) - \Phi_{\Delta t}^N (u^0)\|_{\Ycal} \right] \nonumber\\
&+\mathbb{E}_{u^0\sim \gamma} \left[\| \Phi_{\Delta t}^N(u^0) - \Phi_{\text{NN}}^N (u^0) \|_{\Ycal} \right] \label{eqn:glob_1}
\end{align}
The first term in \eqref{eqn:glob_1} is bounded  by the global error of first order methods  \cite{Atkinson1989}:
\begin{align}
  \mathbb{E}_{u^0\sim \gamma} \left[ \| u(T) - \Phi_{\Delta t}^N (u^0)\|_\Ycal \right]  \leq  \frac{M_0\Delta t \left(e^{TL_0}-1\right)}{2L_0}.
    \end{align}
The second term in \eqref{eqn:glob_1} can be decomposed into:
\begin{align}
   \mathbb{E}_{u^0\sim \gamma} &\left[\| \Phi_{\Delta t}^N (u^0) - \Phi_{\text{NN}}^N  (u^0) \|_{\Ycal} \right]  \leq \mathbb{E}_{u^0\sim \gamma} \left[\| \Phi_{\Delta t}^{N} (u^0) - \Phi_{\Delta t}^{N-1} \circ \Phi_{\text{NN}} (u^0)  \|_{\Ycal} \right] \nonumber\\
   &+ \mathbb{E}_{u^0\sim \gamma} \left[\|  \Phi_{\Delta t}^{N-1} \circ \Phi_{\text{NN}} (u^0)  - \Phi_{\text{NN}}^N  (u^0) \|_{\Ycal} \right].\label{eqn:glob_2}
\end{align}
Using the Lipschitz property of the time-stepping operator:
\begin{align*}
     \mathbb{E}_{u^0\sim \gamma} \left[\| \Phi_{\Delta t}^{N} (u^0) - \Phi_{\Delta t}^{N-1} \circ \Phi_{\text{NN}} (u^0)  \|_{\Ycal} \right] &\leq  L^{N-1}_{\Phi_{\Delta t}} \Egen,\ \text{(from \eqref{eqn:gen_err})}.
\end{align*}
Decomposing the second term in \eqref{eqn:glob_2} recursively in a similar fashion and applying the Lipschitz estimate will give the bound on the total error:
\begin{align*}
    \mathbb{E}_{u^0\sim \gamma} \left[\| u(T) - \Phi_{\text{NN}} ^N(u^0) \|_{\Ycal} \right] \leq  \frac{M_0\Delta t (e^{N L_0\Delta t}-1)}{2L_0} + \sum_{i=1}^{N}  L^{i-1}_{\Phi_{\Delta t}}\mathcal{E}_{gen}.
\end{align*}
The global error accumulation thus is a combination of the error accrued by the numerical solver being approximated, and the network's learning error.

\end{proof}

\section{Future Work and Conclusion}\label{sec:concl}

In this work, we have demonstrated that semi-discrete time-stepping schemes for a wide-class of PDEs exhibit an inherent low-complexity structure and Lipschitz continuity. These properties allow neural networks to effectively learn these schemes, overcoming the curse of dimensionality.

The inviscid hyperbolic conservation law, which we do not study in this paper, poses challenges due to the lack of smoothing effects from diffusion, making it difficult to establish Lipschitz continuity for the semi-discrete method. Classical tools such as fixed-point methods and variational principles are often inapplicable, and techniques effective for scalar equations, like comparison principles, do not easily extend to systems \cite{bressan2000hyperbolic}. Entropy solutions for scalar conservation laws do satisfy an 
$L^1$-contracting semigroup property \cite{krushkov1970first}, that is preserved by certain fully discrete numerical schemes \cite{kuznetsov1976accuracy, cockburn1994error, sanders1983convergence, vila1994convergence}. However, whether an operator learning model with an appropriate choice of encoder-decoders maintains this stability property remains an open question.

In Neural ODEs \cite{chen2018neural}, the derivative of the hidden state is parameterized by a neural network, and a numerical solver, such as the Euler method, is used for integration. However, unlike the approach presented in this work, the solver is not directly learned, which may limit its ability to bypass the computational costs of traditional methods. Extending the methods proposed in this study to incorporate the framework of Neural ODEs could provide a promising direction for future research.

As further work, we would like to extend our analysis to meta-learning systems \cite{ li2016learning} and other learned iterative schemes. Building on our theory that DNNs can learn Newton's method, it would be interesting to study how networks learn other optimization algorithms like gradient descent \cite{andrychowicz2016learning} and to provide quantitative estimates on their generalization error by analyzing the structure and stability of these  operators.

\bibliographystyle{abbrv}
\bibliography{ref.bib}

\end{document}